\def\eps{{\varepsilon}}
\def\Bbb E{\mathbb{E}}
\def\Bbb R{\mathbb{R}}
\newtheorem{corollary}{Corollary}[section]
\makeatletter \@addtoreset{equation}{section}
\newtheorem{lemma}{Lemma}[section]
\newtheorem{theorem}{Theorem}[section]
\newtheorem{proposition}{Proposition}[section]
\newtheorem{remark}{Remark}[section]
\font\tencmmib=cmmib10 \skewchar\tencmmib '60
\font\tenmsb=msbm10 
\def\Bbb#1{\hbox{\tenmsb#1}}
\def\lessim{\ \lower4pt\hbox{$
\buildrel{\displaystyle <}\over\sim$}\ }
\def\gessim{\ \lower4pt\hbox{$\buildrel{\displaystyle >}
\over\sim$}\ }
\def\eps{\varepsilon}
\def\go0{\to 0}
\def\leftitem#1{\item{\hbox to\parindent{\enspace#1\hfill}}}
\def\sg{\sigma}
\def\sg2{\sigma^2}
\def\__{_{\infty}}
\numberwithin{equation}{section} 
\newcommand{\1}{{\rm 1}\kern-0.24em{\rm I}}
\newtheorem{assumption}{Assumption}
\begin{document}

\begin{frontmatter}
\title{Functional estimation in log-concave location families}
\runtitle{Functional estimation in log-concave location families}

\begin{aug}
\author{\fnms{Vladimir} \snm{Koltchinskii}\thanksref{t1}\ead[label=e1]{vlad@math.gatech.edu}} 
and 
\author{\fnms{Martin} \snm{Wahl}\thanksref{m1}\ead[label=e2]{martin.wahl@math.hu-berlin.de}}
\thankstext{t1}{Supported in part by NSF grants DMS-1810958 and DMS-2113121}
\thankstext{m1}{Supported by the Alexander von Humboldt Foundation}
\runauthor{Koltchinskii and Wahl}

\affiliation{Georgia Institute of Technology\thanksmark{m1}}

\address{School of Mathematics\\
Georgia Institute of Technology\\
Atlanta, GA 30332-0160, USA\\
\printead{e1}\\
and
\\
Department of Mathematics\\
Humboldt Universit\"at zu Berlin\\
Unter den Linden 6, 10099, Berlin, Germany\\
\printead{e2}
}
\end{aug}
\vspace{0.2cm}
{\small \today}
\vspace{0.2cm}

\begin{abstract}
Let $\{P_{\theta}:\theta \in {\mathbb R}^d\}$ be a log-concave location family with $P_{\theta}(dx)=e^{-V(x-\theta)}dx,$
where $V:{\mathbb R}^d\mapsto {\mathbb R}$ is a known convex function and let $X_1,\dots, X_n$ be i.i.d. r.v. sampled from distribution $P_{\theta}$ with an unknown location parameter $\theta.$ The goal is to estimate the value $f(\theta)$ of a smooth functional 
$f:{\mathbb R}^d\mapsto {\mathbb R}$ based on observations $X_1,\dots, X_n.$ 
In the case when $V$ is sufficiently smooth and  $f$ is a functional from a ball in a H\"older space $C^s,$
we develop estimators of $f(\theta)$ with minimax optimal error rates measured by the $L_2({\mathbb P}_{\theta})$-distance as well as 
by more general Orlicz norm distances. Moreover, we show that if $d\leq n^{\alpha}$ and $s>\frac{1}{1-\alpha},$ then the resulting estimators are  asymptotically efficient in H\'ajek-LeCam sense with the convergence rate $\sqrt{n}.$ This generalizes earlier results on estimation of smooth functionals in Gaussian shift models. The estimators have the form $f_k(\hat \theta),$ where $\hat \theta$ is the maximum likelihood estimator and 
$f_k: {\mathbb R}^d\mapsto {\mathbb R}$ (with $k$ depending on $s$) are functionals defined in terms of $f$ and designed to provide a higher order bias reduction in functional estimation problem. The method of bias reduction is based on iterative parametric bootstrap and 
it has been successfully used before in the case of Gaussian models. 
\end{abstract}

\begin{keyword}[class=AMS]
\kwd[Primary ]{62H12} \kwd[; secondary ]{62G20, 62H25, 60B20}
\end{keyword}

\begin{keyword}
\kwd{Efficiency} \kwd{Smooth functionals} \kwd{Log-concave location family} \kwd{Bootstrap} 
\kwd{Concentration inequalities} 
\end{keyword}

\end{frontmatter}

\section{Introduction}
\label{intro}

Let $P$ be a log-concave probability distribution in ${\mathbb R}^d$ with density $e^{-V},$ $V:{\mathbb R}^d\mapsto {\mathbb R}$ being a convex function and let $P_{\theta}$, $\theta\in {\mathbb R}^d$
be a location family generated by $P:$ $P_{\theta}(dx)=p_{\theta}(x)dx=e^{-V(x-\theta)}dx$, $\theta \in {\mathbb R}^d.$ In other words, a random variable $X\sim P_{\theta}$ could be represented as $X=\theta+\xi,$
where $\theta \in {\mathbb R}^d$ is a location parameter and $\xi\sim P$ is a random noise with log-concave distribution. Without loss of generality, one can assume that ${\mathbb E}\xi=0$
(otherwise, one can replace function $V(\cdot)$ by $V(\cdot + {\mathbb E} \xi)$).
We will also assume that function $V$ is known and $\theta$ is an unknown parameter of the model to be estimated based on i.i.d. observations $X_1,\dots, X_n$ of $X.$
We will refer to this statistical model as a {\it log-concave location family}. Our main goal is to study the estimation of $f(\theta)$ for a given smooth functional $f:{\mathbb R}^d\mapsto {\mathbb R}.$
A natural estimator of location parameter is the maximum likelihood estimator (MLE) defined as 
\begin{align*}
\hat \theta := \operatorname{argmax}_{\theta \in {\mathbb R}^d}\limits \prod_{j=1}^n p_{\theta}(X_j)=\operatorname{argmin}_{\theta\in {\mathbb R}^d}\limits \frac{1}{n}\sum_{j=1}^n V(X_j-\theta).
\end{align*}
Note that, by Lemma 2.2.1 in \cite{BGVV14} for a log-concave density $e^{-V}, V: {\mathbb R}^d\mapsto {\mathbb R}$ there exist constants $A,B>0$ such that $e^{-V(x)}\leq Ae^{-B\|x\|}$ for all $x\in\mathbb{R}^d,$ implying that $V(x)\to \infty$ as $\|x\|\to \infty.$
It is easy to conclude from this fact that MLE does exist. 
Moreover, it is unique if $V$ is strictly convex (this condition is assumed in what follows). 
In addition, MLE $\hat \theta$ is an equivariant estimator with respect to the translation group in ${\mathbb R}^d:$ 
\begin{align*}
\hat \theta (X_1+u,\dots, X_n+u) = \hat \theta (X_1,\dots, X_n)+ u, u\in {\mathbb R}^d.
\end{align*}
Also note that 
\begin{align*}
{\mathbb E}_{\theta} V(X-\theta')-{\mathbb E}_{\theta}V(X-\theta)=K(P_{\theta}\|P_{\theta'}),
\end{align*}
where $K(P_{\theta}\|P_{\theta'})$ is the Kullback-Leibler divergence between $P_{\theta}$ and $P_{\theta'},$
implying that $\theta$ is the unique minimal point 
of $\theta'\mapsto {\mathbb E}_{\theta} V(X-\theta').$
The uniqueness follows from the identifiability of parameter $\theta:$ if $\theta$ were not identifiable, we would have 
$V(x)=V(x+h), x\in {\mathbb R}^d$ for some $h\neq 0,$ which would contradict the assumption that $V(x)\to \infty$ as $\|x\|\to \infty.$ Assuming some regularity (including differentiability of $V$) one can also argue as follows : using strict convexity and a necessary condition of extremum ${\mathbb E}_{\theta}V'(X-\theta)=0,$ we can conclude that $\mathbb{E}_\theta (V(X-\theta')-V(X-\theta))>\mathbb{E}_\theta\langle V'(X-\theta),\theta-\theta'\rangle=0$ for every $\theta'\neq \theta$.

Moreover, for differentiable $V,$ the score function of location family is $\frac{\partial}{\partial \theta}\log p_{\theta}(X)= -V'(X-\theta),$
and, under some regularity,  
${\mathbb E}_{\theta}V'(X-\theta)=0.$
In addition, the Fisher information matrix of such a log-concave location family is well defined, does not depend on $\theta$ and is given by
\begin{align*} 
{\mathcal I}
&= {\mathbb E}_{\theta} \frac{\partial}{\partial \theta}\log p_{\theta}(X)\otimes \frac{\partial}{\partial \theta}\log p_{\theta}(X)= {\mathbb E}_{\theta} V'(X-\theta)\otimes V'(X-\theta)
\\
&
=
{\mathbb E} V'(\xi)\otimes V'(\xi) = \int_{{\mathbb R}^d} V'(x)\otimes V'(x) e^{-V(x)}dx
\end{align*}
(provided that the integral in the right hand side exists). 
Under further regularity, for twice differentiable $V,$ we also have (via integration by parts)
\begin{align*}
{\mathcal I} = {\mathbb E}V''(\xi)= \int_{{\mathbb R}^d} V''(x) e^{-V(x)}dx.
\end{align*}
Finally, if Fisher information ${\mathcal I}$ is non-singular, then, for a fixed $d$ and $n\to\infty,$ MLE $\hat \theta$ is an asymptotically normal estimator of $\theta$ with limit covariance ${\mathcal I}^{-1}:$
\begin{align*}
\sqrt{n}(\hat \theta-\theta)\overset{d}\to N(0;{\mathcal I}^{-1})\ {\rm as}\ n\to\infty.
\end{align*}
Assumption \ref{Assump_main} below suffices for all the above 
properties to hold.


It seems natural to estimate $f(\theta)$ by the plug-in estimator 
$f(\hat \theta),$ where $\hat \theta$ is the MLE. Such an approach yields asymptotically efficient estimators for regular statistical models in the case of fixed dimension $d$ and $n\to\infty.$ In particular, for our location family, we have by a standard application 
of the delta method that 
\begin{align*}
\sqrt{n}(f(\hat \theta)-f(\theta)) \overset{d}\to N(0; \sigma^2_f(\theta)),
\end{align*}
where $\sigma^2_f(\theta):=\langle {\mathcal I}^{-1} f'(\theta), f'(\theta)\rangle.$
However, it is well known that plug-in estimators are sub-optimal in high-dimensional problems mainly due to their large bias
and, often, non-trivial bias reduction methods are needed to achieve an optimal error rate. This has been one of the difficulties 
in the problem of estimation of functionals of parameters of high-dimensional and infinite-dimensional models for a number of 
years \cite{Levit_1, Levit_2, Ibragimov, Bickel_Ritov, Ibragimov_Khasm_Nemirov, Nemirovski_1990, Birge, Nemirovski}.

One approach to this problem is based 
on replacing $f$ by another function $g$ for which the bias of estimator $g(\hat \theta)$ is small. To find such a function $g,$
one has to solve approximately the ``bias equation" ${\mathbb E}_{\theta} g(\hat \theta)=f(\theta), \theta \in {\mathbb R}^d.$ 
This equation can be written as ${\mathcal T} g=f,$ where 
\begin{align*}
({\mathcal T} g)(\theta) := {\mathbb E}_{\theta} g(\hat \theta)= \int_{{\mathbb R}^d} g(u) P(\theta;du), \theta \in {\mathbb R}^d
\end{align*}
and $P(\theta;A), \theta\in {\mathbb R}^d, A\subset {\mathbb R}^d$ is a Markov kernel on ${\mathbb R}^d$
(or, more generally, on the parameter space $\Theta$ of statistical model), providing the distribution of estimator $\hat \theta.$ 
Denoting ${\mathcal B}:= {\mathcal T}-{\mathcal I},$ where ${\mathcal I}$ is the identity operator in the space of bounded functions on ${\mathbb R^d}$ (not to be confused with the Fisher information also denoted by ${\mathcal I}$), and assuming that $\hat \theta$
is close to $\theta$ and, as a consequence, operator ${\mathcal B}$ is ``small", one can view ${\mathcal T}={\mathcal I}+ {\mathcal B}$ as a small perturbation of identity. 
In such cases, one can try to solve the equation ${\mathcal T} g=f$ in terms of Neumann series $g= ({\mathcal I}-{\mathcal B}+{\mathcal B}^2-\dots) f.$ In what follows, we denote by 
\begin{align*}
f_k(\theta) := \sum_{j=0}^k (-1)^j ({\mathcal B}^j f)(\theta), \theta \in {\mathbb R}^d
\end{align*}
the partial sum of this series and we will use $f_k(\hat \theta)$ (for a suitable choice of $k$ depending on smoothness of functional $f$) as an estimator of $f(\theta).$ It is easy to see that its bias is 
\begin{align*}
{\mathbb E}_{\theta} f_k(\hat \theta)-f(\theta)&=(\mathcal{B}f_k)(\theta)+f_k(\theta)-f(\theta)
\\
&
= (-1)^k ({\mathcal B}^{k+1} f)(\theta), \theta \in {\mathbb R}^d.
\end{align*}
If ${\mathcal B}$ is ``small" and $k$ is sufficiently large, one can hope to achieve a bias reduction through estimator $f_k(\theta).$
Another way to explain this approach is in terms of iterative bias reduction: since the bias of plug-in estimator $f(\hat \theta)$ is equal to $({\mathcal B} f)(\theta),$ one can estimate the bias by $({\mathcal B} f)(\hat \theta)$ and the first order bias reduction yields 
the estimator $f_1(\hat \theta)= f(\hat \theta)- ({\mathcal B} f)(\hat \theta).$ Its bias is equal to $-({\mathcal B}^2 f)(\theta)$
and the second order bias reduction yields the estimator $f_2(\hat \theta)=f(\hat \theta)- ({\mathcal B} f)(\hat \theta)+ ({\mathcal B}^2 f)(\hat \theta),$ etc. This is close to the idea of iterative bootstrap bias reduction \cite{Hall_1, Hall, Jiao}. 

Let $\{\hat \theta^{(k)}: k\geq 0\}$ be the Markov chain with $\hat \theta^{(0)}=\theta$ and with transition probability kernel $P(\theta;A), \theta \in {\mathbb R}^d, A\subset {\mathbb R}^d.$ This chain can be viewed as an output of iterative application of parametric bootstrap to estimator $\hat \theta$ in the model $X_1,\dots, X_n$ i.i.d. $\sim P_{\theta}, \theta \in {\mathbb R}^d:$ at the first iteration, the data is sampled from the distribution 
with parameter $\hat \theta^{(0)}=\theta$ and estimator $\hat \theta^{(1)}=\hat \theta$ is computed; at the second iteration, the data is sampled 
from the distribution $P_{\hat \theta}$ (conditionally on the value of $\hat \theta$) and bootstrap estimator $\hat \theta^{(2)}$ is computed,
and so on. We will call $\{\hat \theta^{(k)}:k\geq 0\}$ the bootstrap chain of estimator $\hat \theta.$
Clearly, $({\mathcal T}^k f)(\theta)= {\mathbb E}_{\theta} f(\hat \theta^{(k)})$ and, by Newton's binomial formula, we also have 
\begin{align}
\nonumber
({\mathcal B}^k f)(\theta)& = (({\mathcal T}-{\mathcal I})^{k} f)(\theta) = \sum_{j=0}^{k} (-1)^{k-j}\binom{k}{j} ({\mathcal T}^{j} f)(\theta)    
\\
&
\label{representation_B^k}
= {\mathbb E}_{\theta}\sum_{j=0}^{k} (-1)^{k-j}\binom{k}{j} f(\hat \theta^{(j)}).
\end{align}
This means that $({\mathcal B}^k f)(\theta)$ is the expectation of the $k$-th order difference of function $f$
along the bootstrap chain $\{\hat \theta^{(j)}:j\geq 0\}.$ In the case when $\|\hat \theta -\theta\|\lesssim \sqrt{\frac{d}{n}}$
with a high probability, the same bound also holds for the increments $\hat \theta^{(j+1)}-\hat \theta^{(j)}$ (conditionally on $\hat \theta^{(j)}$). 
If functional $f$ is $k$ times differentiable and $d$ is small comparing with $n,$ one could therefore expect that $({\mathcal B}^k f)(\theta) \lesssim 
\bigl(\frac{d}{n}\bigr)^{k/2}$
(based on the analogy with 
the behavior of $k$-th order differences of $k$ times differentiable functions in the real line). 
The justification of this heuristic for general parametric models could be rather involved (see \cite{Koltchinskii_2017, Koltchinskii_2018, Koltchinskii_Zhilova_19, Koltchinskii_2020}), but it will be shown below that it is much simpler in the case of equivariant 
estimators $\hat \theta$ (such as the MLE) (see also \cite{Koltchinskii_Zhilova, Koltchinskii_Zhilova_2020}).

Inserting representation \eqref{representation_B^k} into the definition of $f_k$ and  using a simple combinatorial identity, we obtain the following useful representation 
of function $f_k(\theta):$
\begin{align}\label{representation_f_k_alternative}
    f_k(\theta)=\mathbb{E}_\theta\sum_{j=0}^k(-1)^{j}\binom{k+1}{j+1}f(\hat\theta^{(j)}).
\end{align}

The following notations will be used throughout the paper (and some of them have been already used). For two variables $A,B\geq 0,$
$A\lesssim B$ means that there exists an absolute constant $C>0$ such that $A\leq CB.$ The notation $A\gtrsim B$ means that 
$B\lesssim A$ and $A\asymp B$ means that $A\lesssim B$ and $B\lesssim A.$ If the constants in the relationships $\lesssim, \gtrsim, \asymp$
depend on some parameter(s), say, on $\gamma,$ this parameter will be used as a subscript of the relationship, say, $A\lesssim_{\gamma} B.$ 
Given two square matrices $A$ and $B,$ $A\preceq B$ means that $B-A$ is positively semi-definite and $A\succeq B$ means that $B\preceq A.$
The norm notation $\|\cdot\|$ (without further subscripts or superscripts) will be used by default in certain spaces. For instance, it will always 
denote the canonical Euclidean norm of ${\mathbb R}^d,$ the operator norm of matrices (linear transformations) and the operator norm 
of multilinear forms. In some other cases, in particular for functional spaces $L_{\infty}, C^s,$ etc, the corresponding subscripts will be used.

\section{Main results}

Recall that, for a convex non-decreasing function $\psi : {\mathbb R}_+\mapsto {\mathbb R}_+$ with $\psi(0)=0,$ 
the Orlicz $\psi$-norm 
of a r.v. $\eta$ is defined as 
\begin{align*}
\|\eta\|_{\psi} := \inf\Bigl\{c\geq 0: {\mathbb E}\psi\Bigl(\frac{|\eta|}{c}\Bigr)\leq 1\Bigr\}.
\end{align*}
The Banach space of all r.v. on a probability space $(\Omega, \Sigma, {\mathbb P})$ with finite $\psi$-norm is denoted by $L_{\psi}({\mathbb P}).$
If $\psi(u)=u^p, u\geq 0, p\geq 1,$ then the $\psi$-norm coincides with the $L_p$-norm. Another important 
choice is $\psi_{\alpha}(u)=e^{u^{\alpha}}-1, u\geq 0, \alpha\geq 1.$ In particular, for $\alpha=1,$ $L_{\psi_1}$ is 
the space of sub-exponential r.v. and, for $\alpha=2,$ $L_{\psi_2}$ is the space of sub-gaussian r.v.
It is also well known that the $\psi_{\alpha}$-norm is equivalent to the following norm defined in terms of moments 
(or the $L_p$-norms):
\begin{align}
\label{psi_alpha}
\|\eta\|_{\psi_{\alpha}}\asymp \sup_{p\geq 1} p^{-1/\alpha} {\mathbb E}^{1/p}|\eta|^p, \alpha\geq 1.
\end{align}
Note that the right hand side defines a norm for $0<\alpha<1,$ too, whereas the left hand side is not a norm in this case since function 
$\psi_{\alpha}$ is not convex for $0<\alpha<1.$ Relationship \eqref{psi_alpha} still holds for $0<\alpha<1,$ but with constants depending 
on $\alpha$ as $\alpha$ approaches $0.$
With a slight abuse of notations, we will define $\|\eta\|_{\psi_{\alpha}}$ by the right hand side of \eqref{psi_alpha} for all $\alpha>0.$

We will use the following definition of H\"older $C^{s}$-norms of functions $f: {\mathbb R}^d\mapsto {\mathbb R}.$ For $j\geq 0,$ $f^{(j)}$ denotes the $j$-th Fr\' echet  
derivative of $f.$ For $x\in {\mathbb R}^d,$ $f^{(j)}(x)$ is a $j$-linear form on ${\mathbb R}^d$ and the space of such forms will be equipped with the operator norm.
Clearly, $f^{(0)}=f$ and $f^{(1)}=f'$ coincides with the gradient $\nabla f.$
If $f$ is $l$ times differentiable and $s=l+\rho,$ $\rho\in (0,1],$ define
\begin{align*}
\|f\|_{C^s} := \max_{0\leq j\leq l} \sup_{x\in {\mathbb R}^d}\|f^{(j)}(x)\| \vee \sup_{x,y\in {\mathbb R}^d, x\neq y}\frac{\|f^{(l)}(x)-f^{(l)}(y)\|}{\|x-y\|^{\rho}}.
\end{align*}
We will also frequently use $L_{\infty}$ and Lipschitz norms of functions and their derivatives. For instance, 
$\|f^{(j)}\|_{L_{\infty}}= \sup_{x\in {\mathbb R}^d}\|f^{(j)}(x)\|$ and $\|f^{(j)}\|_{\rm Lip}= 
\sup_{x,x'\in {\mathbb R}^d, x\neq x'}\frac{\|f^{(j)}(x)-f^{(j)}(x')\|}{\|x-x'\|}.$

In what follows, we will use some facts related to isoperimetry and concentration 
properties 
of log-concave measures. Given a Borel probability measure $\mu$ on ${\mathbb R}^d,$ let \begin{align*}
\mu^{+}(A) := \liminf_{\eps\to 0} \frac{\mu(A_\eps)-\mu(A)}{\eps},\ A\in 
{\mathcal B}({\mathbb R}^d),
\end{align*}
where $A_{\eps}$ denotes the $\eps$-neighborhood of $A$ and ${\mathcal B}({\mathbb R}^d)$
is the Borel $\sigma$-algebra in ${\mathbb R}^d.$
The so called Cheeger isoperimetric constant of $\mu$ is defined as 
\begin{align*}
I_C(\mu):= \inf_{A\in {\mathcal B}({\mathbb R}^d)}\frac{\mu^{+}(A)}{\mu(A)\wedge (1-\mu(A))}.    
\end{align*}
According to the well known Kannan-Lov\`asz-Simonovits (KLS) conjecture, for a log-concave probability measure 
$\mu(dx)=e^{-V(x)}dx$ on ${\mathbb R}^d$ with covariance operator $\Sigma,$ 
$I_C(\mu)\gtrsim \|\Sigma\|^{-1/2}$
with a dimension-free constant. 
This conjecture remains open, but the following deep recent result by Chen \cite{Yuansi_Chen}
provides a lower bound on $I_C(\mu)$ that is almost dimension-free.

\begin{theorem}
\label{Chen}
There exists a constant $b>0$ such that, for all $d\geq 3$ and for all log-concave distributions $\mu(dx)=e^{-V(x)}dx$ in ${\mathbb R}^d$ with covariance $\Sigma,$
\begin{align*}
I_C(\mu)\geq \|\Sigma\|^{-1/2} d^{-b (\frac{\log\log d}{\log d})^{1/2}}.
\end{align*}
\end{theorem}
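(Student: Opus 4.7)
This is Chen's recent near-resolution of the KLS conjecture, so I would follow his stochastic localization framework, which in turn builds on Eldan's approach. The first step is the classical reduction of the Cheeger bound to a thin-shell estimate: if $\psi_d$ denotes the supremum over all isotropic log-concave measures in $\mathbb{R}^d$ of $\operatorname{Var}(\|X\|)^{1/2}$, then by the Ball--Nguyen argument (combined with the affine-invariant reformulation due to Eldan) one has $I_C(\mu)\gtrsim \|\Sigma\|^{-1/2}/\psi_d$. It therefore suffices to prove $\psi_d \lesssim d^{b(\log\log d/\log d)^{1/2}}$, at which point the theorem reduces to a dimensional estimate for the thin-shell constant.

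Next I would set up Eldan's stochastic localization. Starting from an isotropic log-concave $\mu$, build a martingale $(\mu_t)_{t\ge 0}$ of probability measures by tilting $d\mu_t/d\mu = Z_t^{-1}\exp(\langle W_t, x\rangle - t\|x\|^2/2)$, where $W_t$ is chosen so that the barycenter process is a martingale driven by a standard Brownian motion. Letting $A_t$ be the covariance of $\mu_t$, a direct It\^o calculation gives
\begin{align*}
dA_t = M_t\,dB_t - A_t^2\,dt,
\end{align*}
where the martingale increment $M_t$ involves the third-moment tensor of $\mu_t$. Since $A_t$ decays deterministically like $1/(1+t)$ in every eigendirection not disturbed by the martingale term, a uniform control of $\|A_t\|$ on a time window of length $\sim 1$ translates directly into a thin-shell bound for $\mu$.

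The key step is now Chen's bootstrap. One estimates the quadratic variation of $\log\|A_t\|$ by bounding the operator norm of the third-moment tensor of $\mu_t$ in terms of $\psi_d$ itself, and combines this with a matrix Freedman-type concentration inequality and a carefully chosen stopping time to establish a self-improving recursion, schematically
\begin{align*}
\psi_d^2 \;\lesssim\; (\log d)\,\psi_d^{2(1-\eta)}
\end{align*}
for some small absolute constant $\eta>0$. Iterating this inequality turns any a priori polynomial bound $\psi_d\lesssim d^C$ (which is available from, e.g., Klartag's or Eldan's earlier work) into the quasi-polynomial bound $\psi_d\le d^{b(\log\log d/\log d)^{1/2}}$, finishing the proof.

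\textbf{Main obstacle.} The delicate point is the self-improving recursion. Bounding the third-moment tensor of $\mu_t$ in operator norm by a quantity strictly sub-leading in $\psi_d$ requires combining the martingale structure of $(A_t)$ with a stopping-time argument that controls $\|A_t\|$ before it has time to blow up; this is precisely what distinguishes Chen's bound from Eldan's earlier $d^{1/4}$ estimate, and it is the step that I would expect to consume most of the work.
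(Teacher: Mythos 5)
The paper does not prove Theorem \ref{Chen}; it cites it verbatim from Chen's 2021 GAFA paper \cite{Yuansi_Chen} as an external black box, so there is no internal argument to compare your proposal against. Your sketch does track the broad architecture of Chen's published proof — Eldan's reduction of the Cheeger/KLS constant to a thin-shell estimate, Eldan's stochastic localization with the covariance process $A_t$ satisfying an It\^o SDE with drift $-A_t^2$, and a stopping-time plus self-improvement argument that sharpens control of $\|A_t\|$ beyond Eldan's $d^{1/4}$ bound — so at that level of resolution you have the right picture. One attribution note: the reduction from Cheeger to thin-shell is Eldan's theorem; Ball--Nguyen relates KLS to entropy and the slicing problem, which is a different circle of ideas.

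One technical flag that is worth fixing: the recursion you write schematically, $\psi_d^2 \lesssim (\log d)\,\psi_d^{2(1-\eta)}$ for a fixed absolute $\eta>0$, solves to $\psi_d\lesssim (\log d)^{1/(2\eta)}$, i.e.\ a polylogarithmic bound, which is strictly stronger than and inconsistent with the $d^{b\sqrt{\log\log d/\log d}}$ rate actually claimed. Chen's self-improvement is really a recursion across dimensions (it bounds the thin-shell constant in dimension $n$ in terms of the thin-shell constant in dimension roughly $n^{1-\epsilon}$ by balancing the stochastic-localization time horizon against the size of the martingale fluctuations of $\log\|A_t\|$), and it is precisely this cross-dimensional, exponent-level iteration that equilibrates at the quasi-polynomial rate $\exp\bigl(O(\sqrt{\log d\cdot\log\log d})\bigr)$ rather than collapsing to a polylog. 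If you want the sketch to be faithful to what the theorem asserts, the recursion should be stated at the level of exponents, with the $\log\log d/\log d$ term appearing as the error floor in that iteration. None of this affects the paper, which uses Theorem \ref{Chen} only through the derived bound $c(V)\lesssim_\epsilon \|\Sigma\|\, d^\epsilon$ in Remark \ref{KLS}.
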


Isoperimetric constants $I_C(\mu)$ are known to be closely related to important functional inequalities, 
in particular, to Poincar\'e inequality and its generalizations
(see, e.g., \cite{Bobkov_Houdre, Milman}). 
It is said that Poincar\'e inequality holds for a r.v. $\xi$ in ${\mathbb R}^d$ iff, for some constant $C>0$ and for all locally Lipschitz functions $g:{\mathbb R}^d\mapsto {\mathbb R}$
(which, by Rademacher theorem, are differentiable almost everywhere), 
\begin{align*} 
{\rm Var} (g(\xi)) \leq C {\mathbb E}\|\nabla g(\xi)\|^2.
\end{align*}
The smallest value $c(\xi)$ of constant $C$ in the above inequality is called the Poincar\'e constant of $\xi$ (clearly, it depends only on the distribution of $\xi$). The following property of Poincar\'e constant will be frequently used: if r.v. $\xi = (\xi_1,\dots, \xi_n)$ has independent 
components (with $\xi_j$ being a r.v. in ${\mathbb R}^{d_j}$), then $c(\xi)=\max_{1\leq j\leq n} c(\xi_j)$ (see \cite{Ledoux}, Corollary 5.7). 

If now $\xi \sim \mu$ in ${\mathbb R}^d,$ then the following Cheeger's inequality holds (see, e.g., \cite{Milman}, Theorem 1.1):
\begin{align*}
c(\xi)\leq \frac{4}{I_C^2(\mu)}.
\end{align*}
Moreover, the following $L_p$-version of Poincar\'e inequality holds for all $p\geq 1$
and for all locally Lipschitz functions $g:{\mathbb R}^d \mapsto {\mathbb R}$ (see \cite{Bobkov_Houdre}, Theorem 3.1) 
\begin{align}
\label{L_p_Poinc}
\|g(\xi)-{\mathbb E}g(\xi)\|_{L_p} \lesssim \frac{p}{I_C(\mu)}\|\|\nabla g(\xi)\|\|_{L_p}.   \end{align}

\begin{remark}
\normalfont
\label{KLS}
Note that, if $\xi\sim \mu$ and $\mu(dx)=e^{-V(x)}dx$ is log-concave, then (see \cite{Milman}, Theorem 1.5)
\begin{align*}
c(\xi)\asymp \frac{1}{I_C^2(\mu)}
\end{align*}
and, by Theorem \ref{Chen}, we have 
\begin{align}
\label{KLS_bd}
c(\xi)\leq \|\Sigma\| \ d^{2b (\frac{\log\log d}{\log d})^{1/2}}.
\end{align}

In what follows, we denote the Poincar\'e constant $c(\xi)$ of r.v. $\xi\sim \mu$ 
with log-concave distribution $\mu(dx)=e^{-V(x)}dx$ by $c(V).$ Bound \eqref{KLS_bd}
implies that $c(V)\lesssim_{\epsilon} \|\Sigma\| d^{\epsilon}$ for all $\epsilon>0.$
Also, with this notation, we can rewrite the $L_p$-version \eqref{L_p_Poinc} of Poincar\'e inequality as follows:
\begin{align}
\label{main_conc_log_concave}
\|g(\xi)-{\mathbb E} g(\xi)\|_{L_p} \lesssim \sqrt{c(V)} p\ \|\|\nabla g(\xi)\|\|_{L_p}.
\end{align}
This concentration bound will be our main tool in Section \ref{Sec:conc}. It will be convenient for our purposes to express it in terms of local Lipschitz 
constants of $g$ defined as follows:
\begin{align*}
(L g)(x):= \inf_{U\ni x} \sup_{x',x''\in U, x'\neq x''} \frac{|g(x')-g(x'')|}{\|x'-x''\|}, x\in {\mathbb R}^d
\end{align*}
where the infimum is taken over all the balls $U$ centered at $x.$ Similar definition could be also used for vector valued 
functions $g.$ Clearly, $\|\nabla g(x)\|\leq (Lg)(x), x\in {\mathbb R}^d$ and \eqref{main_conc_log_concave} implies 
that 
\begin{align}
\label{main_conc_log_concave_A}
\|g(\xi)-{\mathbb E} g(\xi)\|_{L_p} \lesssim \sqrt{c(V)} p\ \|(Lg)(\xi)\|_{L_p}.
\end{align}
\end{remark}

The following assumptions on $V$ will be used throughout the paper.

\begin{assumption}
\label{Assump_main}
\normalfont
Suppose that 
\begin{enumerate}[label=(\roman*)]
\item $V$ is strictly convex and twice continuously differentiable such that,
for some constants $M,L>0,$ $\|V''\|_{L_{\infty}}\leq M$ and $\|V''\|_{\rm Lip}\leq L.$
\item For some constant $m>0,$
$
{\mathcal I}\succeq m I_d.
$
\end{enumerate}
\end{assumption}
Under Assumption \ref{Assump_main}, we have ${\mathcal I}=\mathbb{E}V''(\xi)\preceq M I_d$ and thus $m\leq M$. 

\begin{remark}
\normalfont
Obviously, Assumption \ref{Assump_main} holds in the Gaussian case, when 
$V(x)=c_1 + c_2\|x\|^2, x\in {\mathbb R}^d.$ In this case, 
$V''(x)= m I_d, x\in {\mathbb R}^d$ for $m=2c_2>0,$
which is much stronger than Assumption \ref{Assump_main}, (ii).
Assumption \ref{Assump_main} also holds, for instance, for  $V(x)=\varphi(\|x\|^2), x\in {\mathbb R}^d,$ where $\varphi$ is a $C^{\infty}$ function 
in ${\mathbb R}$ such that $\varphi''$ is supported in $[0,1],$ 
$\varphi''(t)\geq 0, t\in {\mathbb R}$ and $\varphi'(0)=0.$ 
Of course, in this case, the condition 
$V''(x)\succeq m I_d$ does not hold
uniformly in $x$ for any positive $m,$ but Assumption \ref{Assump_main}, (ii) holds.
If $V(x)=c_1+ \|x\|^{2p}, x\in {\mathbb R}^d$ for some $p\geq 1/2,$
it is easy to check that Assumption \ref{Assump_main} holds only 
for $p=1.$
\end{remark}

We are now ready to state our main result.

\begin{theorem}
\label{main_theorem}
Suppose Assumption \ref{Assump_main} holds and $d\leq \gamma n,$ where 
\begin{align*}
\gamma := c\Bigl(\frac{m}{M}\wedge \frac{m^2}{L \sqrt{M}}\Bigr)^2
\end{align*}
with a small enough constant $c>0.$
Let $f\in C^s$ for some $s=k+1+\rho,$ $k\geq 0,$ $\rho\in (0,1].$
Then 
\begin{align*}
&
\sup_{\theta \in {\mathbb R}^d}\Bigl\|f_k(\hat \theta) - f(\theta)- n^{-1}\sum_{j=1}^n \langle V'(\xi_j), {\mathcal I}^{-1} f'(\theta)\rangle\Bigr\|_{L_{\psi_{2/3}}({\mathbb P}_{\theta})}
\\
&
\lesssim_{L,M,m,s} \|f\|_{C^s} \Bigl[\sqrt{\frac{c(V)}{n}}\Bigl(\frac{d}{n}\Bigr)^{\rho/2} + \Bigl(\sqrt{\frac{d}{n}}\Bigr)^s\Bigr].
\end{align*}
\end{theorem}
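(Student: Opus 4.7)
Set $L_\theta(\xi) := n^{-1}\sum_{j=1}^n \langle V'(\xi_j), \mathcal{I}^{-1} f'(\theta)\rangle$, which has mean zero under $\mathbb{P}_\theta$. Using that the bias of $f_k(\hat\theta)$ equals $(-1)^k(\mathcal{B}^{k+1}f)(\theta)$, I split
\begin{align*}
f_k(\hat\theta) - f(\theta) - L_\theta(\xi) = \bigl[f_k(\hat\theta) - L_\theta(\xi) - \mathbb{E}_\theta f_k(\hat\theta)\bigr] + (-1)^k(\mathcal{B}^{k+1}f)(\theta),
\end{align*}
and bound the two pieces separately: the deterministic bias via the bootstrap chain representation, and the centered stochastic remainder via the iterated Poincar\'e-type inequality \eqref{main_conc_log_concave_A}.

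For the bias term, representation \eqref{representation_B^k} expresses $(\mathcal{B}^{k+1}f)(\theta)$ as the expectation of the $(k+1)$-th order forward difference of $f$ along the bootstrap chain $\{\hat\theta^{(j)}\}_{j\ge 0}$. Equivariance of the MLE implies that, conditionally on $\hat\theta^{(j)}$, the increment $\hat\theta^{(j+1)}-\hat\theta^{(j)}$ has the same law as $\hat\theta - \theta$ under $\mathbb{P}_\theta$. Assumption \ref{Assump_main} combined with \eqref{main_conc_log_concave_A} (applied to the empirical score $n^{-1}\sum_j V'(\xi_j)$ together with local strong convexity of the empirical loss on a ball of radius $\sqrt{d/n}$) yields $\|\hat\theta-\theta\|$ of order $\sqrt{d/n}$ in every Orlicz norm we shall need. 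A Taylor expansion of $f\in C^s$ of order $k+1$ along the chain then produces $|(\mathcal{B}^{k+1}f)(\theta)| \lesssim_{L,M,m,s}\|f\|_{C^s}(d/n)^{s/2}$.

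For the stochastic remainder, I apply \eqref{main_conc_log_concave_A} to $G(\xi_1,\ldots,\xi_n):=f_k(\hat\theta)-L_\theta(\xi)$, regarded as a function of the log-concave vector $(\xi_1,\ldots,\xi_n)\in\mathbb{R}^{nd}$ whose Poincar\'e constant equals $c(V)$ by the product property cited after \eqref{L_p_Poinc}. Implicit differentiation of the score equation $\sum_j V'(X_j-\hat\theta)=0$ gives $\partial_{\xi_i}\hat\theta = n^{-1}\hat S^{-1} V''(X_i-\hat\theta)$, where $\hat S:=n^{-1}\sum_j V''(X_j-\hat\theta)$, hence
\begin{align*}
\partial_{\xi_i}G = \tfrac{1}{n}\bigl[V''(X_i-\hat\theta)\hat S^{-1}f_k'(\hat\theta) - V''(\xi_i)\mathcal{I}^{-1}f'(\theta)\bigr].
\end{align*}
At $\hat\theta=\theta$, $\hat S=\mathcal{I}$, and $f_k'=f'$ the two contributions cancel, so the residual is controlled by $\|\hat\theta-\theta\|$, $\|\hat S-\mathcal{I}\|$, and $\|f_k'(\hat\theta)-f'(\theta)\|$; using H\"older continuity of exponent $\rho$ of the top derivative of $f$ (and of $f_k$) to extract the sharp exponent from the third discrepancy, one obtains $\|(LG)(\xi)\|_{L_p}\lesssim\|f\|_{C^s}\,n^{-1}(d/n)^{\rho/2}\sqrt{p}$ on the range of $p$ relevant for $\psi_{2/3}$. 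Inserting this into \eqref{main_conc_log_concave_A} yields $\|G-\mathbb{E}G\|_{L_p}\lesssim\sqrt{c(V)/n}\,(d/n)^{\rho/2}\|f\|_{C^s}\,p^{3/2}$, which is precisely the $\psi_{2/3}$ bound claimed.

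The main obstacle is the simultaneous control of the three residuals $(\hat\theta-\theta,\ \hat S-\mathcal{I},\ f_k'(\hat\theta)-f'(\theta))$ in $\psi_\alpha$ norms (not merely with high probability) so that the output of one Poincar\'e step can be fed back as the input to the next; in particular, propagating the H\"older exponent $\rho$ — rather than an integer order — through the implicit-function cancellation is what allows the bound to interpolate correctly in $s$ and to match the minimax rate. A secondary difficulty is verifying that the Taylor-bias analysis for $(\mathcal{B}^{k+1}f)(\theta)$ survives after taking expectations over the full $k+1$ steps of the bootstrap chain, which requires iterating the concentration estimate for $\|\hat\theta-\theta\|$ conditionally on each $\hat\theta^{(j)}$ and exploiting the uniform (in $\theta$) nature of the bounds permitted by equivariance.
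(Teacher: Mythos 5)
Your high-level decomposition (bias via the bootstrap chain plus a centered stochastic remainder controlled by Poincar\'e) is the same as the paper's, but the proposal has a genuine gap in the stochastic part, and it glosses over exactly the difficulty that makes the paper's proof nontrivial under Assumption \ref{Assump_main}.

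The implicit-differentiation formula $\partial_{\xi_i}\hat\theta = n^{-1}\hat S^{-1}V''(X_i-\hat\theta)$ is fine pointwise, but to feed it into \eqref{main_conc_log_concave_A} you need $L_p$-control of $\|\hat S^{-1}\|$, and under Assumption \ref{Assump_main} you do \emph{not} have $V''(x)\succeq mI_d$ pointwise — only $\mathbb{E}V''(\xi)\succeq mI_d$. So $\hat S = n^{-1}\sum_j V''(X_j-\hat\theta)$ can be nearly singular on a low- but not zero-probability event, the local Lipschitz constant of $\hat\theta$ is not in $L_p$ for the $p$-range you need, and the bound $\|(LG)(\xi)\|_{L_p}\lesssim\|f\|_{C^s}n^{-1}(d/n)^{\rho/2}\sqrt{p}$ simply does not hold for the raw MLE. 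The paper's Proposition \ref{Lip_basic} only establishes the Lipschitz property on the event $A=A_1\cap A_2$ where $\|\hat\theta-\theta\|$ is small and $\|\hat S-\mathcal I\|\leq m/4$, and the whole machinery of $\check\theta$, $\check f_k$, the smooth cutoffs $\varphi,\psi$, and Propositions \ref{bd_on_bias}, \ref{Lip_check_theta}, \ref{f_k-check_f_k} exists precisely to convert this local Lipschitz property into a globally Lipschitz surrogate whose discrepancy from $\hat\theta$ and $f_k$ is exponentially small. Your proposal has no analogue of this truncation/smoothing step; as a result it also cannot deliver the claim ``$\|\hat\theta-\theta\|$ of order $\sqrt{d/n}$ in every Orlicz norm we shall need'' (the paper's Corollary \ref{bd_on_hat_theta_wedge_cor} only controls the \emph{truncated} quantity $\|\hat\theta-\theta\|\wedge\frac{m}{12L}$). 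Under the stronger hypothesis $V''(x)\succeq mI_d$ of Theorem \ref{main_theorem_AAA} your direct route does go through — the paper says as much — but not under Assumption \ref{Assump_main}.

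A secondary issue: the claimed ``cancellation'' at $\hat\theta=\theta$, $\hat S=\mathcal I$, $f_k'=f'$ is heuristic and not exact. Even when $\hat\theta=\theta$, $\hat S=n^{-1}\sum_j V''(\xi_j)\neq\mathcal I$ (it only concentrates around $\mathcal I$), and $f_k'(\theta)\neq f'(\theta)$ in general (the paper handles the linear discrepancy $\check f_k'(\theta)-f'(\theta)$ as a separate term, bounded via Lemma \ref{bd_on_V'} and Proposition \ref{norms_of_check_B^k}). The paper instead expands $\hat h=-\mathcal I^{-1}g_n'(0)+\mathcal I^{-1}q_n(\hat h)-\mathcal I^{-1}r(\hat h)$ (display \eqref{represent_hat_h}) so that the linearization against $V'$ appears exactly once and all remaining pieces have small local Lipschitz constants after truncation. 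You also need to account, in the bias step, for the first-moment term $\|\mathbb E\check\vartheta\|\lesssim d/n$ (the MLE is biased, Proposition \ref{bd_on_bias}), which enters Proposition \ref{bias_of_check_f_k}; a bare Taylor argument ``of order $k+1$'' isn't quite the full story.
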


Theorem \ref{main_theorem} shows that $f_k(\hat \theta) - f(\theta)$ can be approximated by a normalized sum of i.i.d.
mean zero r.v. $n^{-1}\sum_{j=1}^n \langle V'(\xi_j), {\mathcal I}^{-1} f'(\theta)\rangle.$ Moreover, the error of this approximation 
is of the order $o(n^{-1/2})$ provided that $d=o(n^{\alpha})$ for some $\alpha \in (0,1)$ satisfying $s>\frac{1}{1-\alpha}$ and that $\|\Sigma\|$ is bounded by a constant. 
This follows from the fact that $c(V)\lesssim_{\epsilon} d^{\epsilon}\|\Sigma\|$ for an arbitrarily small $\epsilon >0$ (see Remark \ref{KLS}).
In addition, by Lemma \ref{bd_on_V'} below,
r.v. $\langle V'(\xi), {\mathcal I}^{-1} f'(\theta)\rangle$ is subgaussian with 
\begin{align}\label{eq_V_'_subgaussian}
\|\langle V'(\xi), {\mathcal I}^{-1} f'(\theta)\rangle\|_{\psi_2} \lesssim \sqrt{M} \|{\mathcal I}^{-1} f'(\theta)\|
\lesssim \frac{\sqrt{M}}{m} \|f'(\theta)\|.
\end{align}
As a result, we can obtain the following simple, but important corollaries of Theorem \ref{main_theorem}. 
Recall that $\sigma^2_f(\theta)=\langle {\mathcal I}^{-1} f'(\theta), f'(\theta)\rangle.$

\begin{corollary}
\label{cor_L_2}
Under the conditions of Theorem \ref{main_theorem},
\begin{align*}
\sup_{\theta\in {\mathbb R}^d}\Bigl|\|f_k(\hat \theta)-f(\theta)\|_{L_2({\mathbb P}_{\theta})}
-\frac{\sigma_f(\theta)}{\sqrt{n}}\Bigr|
\lesssim_{L,M,m,s} \|f\|_{C^s} \Bigl[\sqrt{\frac{c(V)}{n}}\Bigl(\frac{d}{n}\Bigr)^{\rho/2} + \Bigl(\sqrt{\frac{d}{n}}\Bigr)^s\Bigr].
\end{align*}
\end{corollary}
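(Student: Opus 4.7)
The plan is to derive the corollary as a direct consequence of the $\psi_{2/3}$-approximation supplied by Theorem \ref{main_theorem}. Setting
$$S_n(\theta) := n^{-1}\sum_{j=1}^n\langle V'(\xi_j),{\mathcal I}^{-1}f'(\theta)\rangle \qquad \text{and} \qquad R_n := f_k(\hat\theta)-f(\theta)-S_n(\theta),$$
Theorem \ref{main_theorem} controls $\|R_n\|_{L_{\psi_{2/3}}({\mathbb P}_\theta)}$ by the right-hand side of the target bound, uniformly in $\theta$. The reverse triangle inequality for the $L_2({\mathbb P}_\theta)$-norm then gives
$$\Bigl|\|f_k(\hat\theta)-f(\theta)\|_{L_2({\mathbb P}_\theta)} - \|S_n(\theta)\|_{L_2({\mathbb P}_\theta)}\Bigr| \leq \|R_n\|_{L_2({\mathbb P}_\theta)},$$
so the task reduces to two elementary steps: (i) computing $\|S_n(\theta)\|_{L_2({\mathbb P}_\theta)}$ exactly, and (ii) comparing the $L_2$- and $\psi_{2/3}$-norms of $R_n$.

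For step (i), I would use the identities ${\mathbb E}V'(\xi)=0$ and ${\mathbb E}[V'(\xi)\otimes V'(\xi)]={\mathcal I}$ recorded in the introduction: the summands $\langle V'(\xi_j),{\mathcal I}^{-1}f'(\theta)\rangle$ are i.i.d.\ with mean zero and variance
$$\langle{\mathcal I}\,{\mathcal I}^{-1}f'(\theta),{\mathcal I}^{-1}f'(\theta)\rangle = \langle f'(\theta),{\mathcal I}^{-1}f'(\theta)\rangle = \sigma_f^2(\theta),$$
so independence yields $\|S_n(\theta)\|_{L_2({\mathbb P}_\theta)} = \sigma_f(\theta)/\sqrt{n}$ exactly.

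For step (ii), the moment characterization \eqref{psi_alpha} (extended in the excerpt to all $\alpha>0$) specializes with $p=2$ and $\alpha=2/3$ to $\|R_n\|_{L_2({\mathbb P}_\theta)} \lesssim \|R_n\|_{L_{\psi_{2/3}}({\mathbb P}_\theta)}$. Plugging in the bound from Theorem \ref{main_theorem} and taking the supremum over $\theta\in{\mathbb R}^d$ produces precisely the claimed inequality. There is no substantive obstacle: the technical work is packaged into Theorem \ref{main_theorem}, and the corollary is an essentially routine consequence of the reverse triangle inequality, a one-line variance computation for the linearization, and the trivial embedding $L_{\psi_{2/3}}\hookrightarrow L_2$.
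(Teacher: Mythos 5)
Your proposal is correct and matches the paper's own (very terse) argument: invoke Theorem \ref{main_theorem} for the $\psi_{2/3}$-bound on the remainder $R_n$, use $\|R_n\|_{L_2}\lesssim\|R_n\|_{L_{\psi_{2/3}}}$, compute $\|S_n(\theta)\|_{L_2}=\sigma_f(\theta)/\sqrt{n}$ exactly from ${\mathbb E}V'(\xi)=0$, ${\mathbb E}[V'(\xi)\otimes V'(\xi)]={\mathcal I}$, and independence, and finish with the reverse triangle inequality. You have simply spelled out the steps the paper leaves implicit.
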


This corollary immediately follows from the bound of Theorem \ref{main_theorem} and the fact that the $L_2$-norm is dominated by the  $\psi_{2/3}$-norm. 
It implies the second claim of the following proposition.

\begin{proposition}
\label{prop_min_max_upper}
Let $f\in C^s$ for some $s>0.$ \\
1. For $s\in (0,1],$
\begin{align*}
\sup_{\|f\|_{C^s}\leq 1} \sup_{\theta\in {\mathbb R}^d} \|f(\hat \theta)-f(\theta)\|_{L_2({\mathbb P}_{\theta})}
\lesssim_{L,M,m,s} \Bigl(\sqrt{\frac{d}{n}}\Bigr)^s \wedge 1.
\end{align*}
\\
2. For $s=k+1+\rho$ for some $k\geq 0$ and $\rho \in (0,1],$
suppose Assumption \ref{Assump_main} holds and also $\|\Sigma\|\lesssim 1.$ 
If $d\lesssim n^{\alpha}$ for some $\alpha\in (0,1),$ then
\begin{align}
\label{min_max_upper}
\sup_{\|f\|_{C^s}\leq 1} \sup_{\theta\in {\mathbb R}^d} \|f_k(\hat \theta)-f(\theta)\|_{L_2({\mathbb P}_{\theta})}
\lesssim_{L,M,m,s} \Bigl(\frac{1}{\sqrt{n}} \vee \Bigl(\sqrt{\frac{d}{n}}\Bigr)^s\Bigr) \wedge 1.
\end{align}
\end{proposition}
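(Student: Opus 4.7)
The plan is to handle the two parts separately. Part~1 reduces to a direct H\"older-plus-moment argument for the plug-in estimator; Part~2 is essentially a bookkeeping consequence of Corollary~\ref{cor_L_2}.

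\textbf{Part 1.} Fix $\theta\in\mathbb{R}^d$ and $f$ with $\|f\|_{C^s}\leq 1$ for $s\in(0,1]$. By H\"older continuity, $|f(\hat\theta)-f(\theta)|\leq \|\hat\theta-\theta\|^s$, and trivially $|f(\hat\theta)-f(\theta)|\leq 2\|f\|_{L_\infty}\leq 2$. Since $s\leq 1$, Jensen's inequality yields
\begin{align*}
\|f(\hat\theta)-f(\theta)\|_{L_2(\mathbb{P}_\theta)}\leq \bigl(\mathbb{E}_\theta\|\hat\theta-\theta\|^{2s}\bigr)^{1/2}\leq \bigl(\mathbb{E}_\theta\|\hat\theta-\theta\|^2\bigr)^{s/2}.
\end{align*}
By equivariance of the MLE, $\hat\theta-\theta$ under $\mathbb{P}_\theta$ is distributionally identical to $\hat\theta(\xi_1,\dots,\xi_n)$ with $\xi_j\sim P$, so the second moment is independent of $\theta$. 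I would obtain $\mathbb{E}_0\|\hat\theta\|^2\lesssim_{L,M,m} d/n$ for $d\leq\gamma n$ by linearizing the first-order optimality condition $n^{-1}\sum_j V'(X_j-\hat\theta)=0$ around $\theta=0$: strict convexity of $V$ and $\mathcal{I}\succeq mI_d$ (Assumption~\ref{Assump_main}) transfer the log-concave concentration of $n^{-1}\sum_j V'(\xi_j)$, obtained via~\eqref{main_conc_log_concave_A} in Remark~\ref{KLS}, to the same order for $\hat\theta$. When $d>\gamma n$, the trivial bound~$\leq 2$ matches $(\sqrt{d/n})^s\wedge 1$ up to a constant depending on $\gamma,s$, hence on $L,M,m,s$.

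\textbf{Part 2.} By Corollary~\ref{cor_L_2},
\begin{align*}
\|f_k(\hat\theta)-f(\theta)\|_{L_2(\mathbb{P}_\theta)}\lesssim_{L,M,m,s}\frac{\sigma_f(\theta)}{\sqrt{n}}+\sqrt{\frac{c(V)}{n}}\Bigl(\frac{d}{n}\Bigr)^{\rho/2}+\Bigl(\sqrt{\frac{d}{n}}\Bigr)^s.
\end{align*}
Since $s\geq 1$, $\|f'(\theta)\|\leq \|f\|_{C^s}\leq 1$, and $\mathcal{I}^{-1}\preceq m^{-1}I_d$ gives $\sigma_f(\theta)\leq m^{-1/2}$, so the first summand is $\lesssim_m n^{-1/2}$. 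For the middle summand, Remark~\ref{KLS} together with $\|\Sigma\|\lesssim 1$ gives $c(V)\lesssim_\epsilon d^\epsilon$ for every $\epsilon>0$; choosing $\epsilon$ small enough that $\alpha(\epsilon+\rho)<\rho$ (possible since $\alpha<1$) and using $d\lesssim n^\alpha$,
\begin{align*}
\sqrt{\frac{c(V)}{n}}\Bigl(\frac{d}{n}\Bigr)^{\rho/2}\lesssim_\epsilon \frac{d^{(\epsilon+\rho)/2}}{n^{(1+\rho)/2}}\lesssim n^{(\alpha(\epsilon+\rho)-(1+\rho))/2}\lesssim \frac{1}{\sqrt{n}}.
\end{align*}
The third summand is exactly $(\sqrt{d/n})^s$. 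The outer $\wedge 1$ is automatic: from the Neumann partial sum $f_k=\sum_{j=0}^k(-1)^j\mathcal{B}^jf$ and the fact that $\mathcal{B}=\mathcal{T}-\mathcal{I}$ is a difference of Markov averaging operators, $\|f_k\|_{L_\infty}\lesssim_k \|f\|_{L_\infty}\leq 1$, so $\|f_k(\hat\theta)-f(\theta)\|_{L_2}\lesssim_k 1$.

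The main obstacle is the nonasymptotic second-moment bound $\mathbb{E}_\theta\|\hat\theta-\theta\|^2\lesssim d/n$ used in Part~1: while~\eqref{main_conc_log_concave_A} controls the score, one must still transfer this to the minimizer of a random convex function in a quantitative way that simultaneously uses strict convexity and the Fisher information lower bound, which is where the precise smallness condition $d\leq\gamma n$ and the explicit form of $\gamma$ enter. Part~2 is then routine once one tracks the $\epsilon$-dependence carefully enough to absorb it into the constant, which should also be allowed to depend on $\alpha$.
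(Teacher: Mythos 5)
Part 2 of your proof is essentially the paper's own argument: invoke Corollary \ref{cor_L_2}, bound $\sigma_f(\theta)\lesssim m^{-1/2}$, absorb the middle term via $c(V)\lesssim_\epsilon d^\epsilon\|\Sigma\|$ and $d\lesssim n^\alpha$, and note $\|f_k\|_{L_\infty}\lesssim_k 1$ for the outer $\wedge 1$. That is correct (the constant picks up a dependence on $\alpha$ and $\rho$ through the choice of $\epsilon$, which the paper implicitly tolerates).

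Part 1, however, contains a genuine gap, and you have correctly identified it yourself as "the main obstacle." Your chain $\|f(\hat\theta)-f(\theta)\|_{L_2}\leq(\mathbb{E}_\theta\|\hat\theta-\theta\|^2)^{s/2}$ is valid, but the paper never establishes, and under Assumption \ref{Assump_main} alone one cannot read off, a nonasymptotic bound $\mathbb{E}_\theta\|\hat\theta-\theta\|^2\lesssim d/n$. Theorem \ref{th_bound_on_hat_theta} only gives the tail bound $\mathbb{P}\{\|\hat\theta-\theta\|>C\frac{\sqrt M}{m}\sqrt{t/n}\}\leq e^{-t}$ for $t$ in the \emph{bounded window} $[1,\gamma n]$; the argument producing \eqref{main_bound} rests on the cubic remainder $\frac{L}{2}\delta^3$ being dominated by $\frac{m}{2}\delta^2$, i.e. $\delta\lesssim m/L$, and gives nothing for larger $\delta$. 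Without tail control beyond that window the second moment of $\|\hat\theta-\theta\|$ is not under control. Your suggested remedy --- ``linearizing the optimality condition and transferring concentration via strict convexity and $\mathcal{I}\succeq mI_d$'' --- is the argument of Theorem \ref{main_theorem_AAA}, but that requires the strictly stronger hypothesis $V''(x)\succeq m I_d$ for all $x$; Assumption \ref{Assump_main}(ii) only bounds the \emph{expected} Hessian from below, which is why the paper cannot obtain a global Lipschitz or moment bound for $\hat\theta$.

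The paper's fix is to truncate \emph{inside} the $L_2$ norm before raising to the power $s$: it writes
\begin{align*}
\|f(\hat\theta)-f(\theta)\|_{L_2(\mathbb{P}_\theta)}
\leq \Bigl\|\Bigl(\|\hat\theta-\theta\|\wedge\tfrac{m}{12L}\Bigr)^s\Bigr\|_{L_2(\mathbb{P}_\theta)}
 + 2\,\mathbb{P}_\theta^{1/2}\bigl\{\|\hat\theta-\theta\|\geq \tfrac{m}{12L}\bigr\},
\end{align*}
then controls the first term via Corollary \ref{bd_on_hat_theta_wedge_cor} (which bounds the $\psi_2$-norm, hence the $L_2$-norm, of the truncated quantity) and the second via Theorem \ref{th_bound_on_hat_theta}, which gives probability $\leq e^{-\gamma n}$ for the exceedance. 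Using the truncated error rather than the raw second moment is exactly what makes the bounded-window tail bound sufficient. Your proof should adopt this decomposition rather than assert the unproven moment bound.
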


Combining bound \eqref{min_max_upper} with the following result shows some form of minimax optimality of
estimator $f_k(\hat \theta).$

\begin{proposition} 
\label{min_max_lower}
Suppose Assumption \ref{Assump_main} holds.
Then, for all $s>0,$
\begin{align*}
\sup_{\|f\|_{C^s}\leq 1}\inf_{\hat T_n}\sup_{\|\theta\|\leq 1}\|\hat T_n-f(\theta)\|_{L_2({\mathbb P}_{\theta})}\gtrsim_{m,M} \Big(\frac{1}{\sqrt{n}}&\vee\Big(\sqrt{\frac{d}{n}}\Big)^s\Big)\wedge 1,
\end{align*}
where the infimum is taken over all estimators $\hat T_n=\hat T_n(X_1,\dots,X_n).$
\end{proposition}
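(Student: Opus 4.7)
My plan is to prove two separate lower bounds—the parametric rate $1/\sqrt n$ and the nonparametric rate $(d/n)^{s/2}$—and combine them via a maximum. Their maximum (capped at $1$) gives the stated bound; the truncation at $1$ is trivial via the zero estimator, since $|f(\theta)|\leq \|f\|_{C^s}\leq 1$.

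For the parametric part, I will apply Le Cam's two-point method with a linear functional. Take $f(\theta)=\langle v,\theta\rangle$ for a unit vector $v\in\mathbb R^d$, which satisfies $\|f\|_{C^s}\leq 1$ for $s\geq 1$ (for $s<1$, one works with $cf$ on $B(0,1)$ for a small absolute $c$). Consider $\theta_0=0$ and $\theta_1=hv$ with $h=c/\sqrt{nM}$. Using Assumption \ref{Assump_main}(i) (which gives $V''\preceq MI_d$) together with the identity $\mathbb EV'(\xi)=0$, a second-order Taylor bound yields $K(P_{\theta_0}\|P_{\theta_1})\leq Mh^2/2$, hence $K(P_{\theta_0}^n\|P_{\theta_1}^n)\leq c^2/2$. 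Le Cam's two-point inequality then delivers the lower bound $\gtrsim h \asymp 1/\sqrt{nM}$.

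For the nonparametric part, I will pair a $C^s$-bump with a two-prior (Le Cam) argument. Choose a smooth radial bump $\phi:\mathbb R^d\to\mathbb R$ with $\phi(0)=1$, $\mathrm{supp}(\phi)\subset B(0,1)$, and $\|\phi\|_{C^s}\leq c_0$; for $\delta\asymp\sqrt{d/n}$ set $f(\theta)=(\delta^s/c_0)\phi(\theta/\delta)$. A direct scaling computation shows $\|f\|_{C^s}\leq 1$, while $f(0)\asymp\delta^s$ and $f\equiv 0$ for $\|\theta\|\geq\delta$. Take $\pi_0=\delta_{\{0\}}$ and $\pi_1=$ uniform on the sphere $\{\|\theta\|=\delta\}$, so that $|\mathbb E_{\pi_0}f-\mathbb E_{\pi_1}f|\asymp\delta^s=(d/n)^{s/2}$. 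Le Cam's two-prior inequality then gives a minimax lower bound of order $\delta^{2s}(1-\mathrm{TV}(\mathcal P_{\pi_0}^n,\mathcal P_{\pi_1}^n))^2$, where $\mathcal P_{\pi_i}^n=\int P_\theta^n\,d\pi_i(\theta)$.

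The remaining step—and the main obstacle—is to show that $\mathrm{TV}(\mathcal P_{\pi_0}^n,\mathcal P_{\pi_1}^n)$ is bounded away from $1$ for $\delta\asymp\sqrt{d/n}$. The canonical tool is a $\chi^2$-divergence bound: in the Gaussian case one has the clean identity $\int p_\theta p_{\theta'}/p_0\,dx=\exp(\langle\theta,\theta'\rangle)$, reducing the task to analyzing the spherical MGF of $\langle u,u'\rangle$ for $u,u'$ uniform on the unit sphere. For general log-concave $V$ this identity holds up to lower-order corrections via a second-order Taylor expansion of $V$ (using $\|V''\|_{L_\infty}\leq M$ and $\mathcal I\succeq mI_d$), so the MGF bound carries over with constants depending only on $m,M$, and the log-concave concentration provided by Theorem \ref{Chen} controls the remainder terms. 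If the naive spherical construction is insufficient to reach the tight $(d/n)^{s/2}$ rate, the fallback is a Fano-style argument over a suitable packing of the sphere, again exploiting log-concave concentration to control the mutual information.
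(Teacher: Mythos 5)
Your decomposition into a parametric ($1/\sqrt n$) and a nonparametric ($(d/n)^{s/2}$) lower bound is natural, and the parametric two-point argument via a (truncated) linear functional is essentially sound — the KL estimate $K(P_{\theta}\|P_{\theta'})\leq M\|\theta-\theta'\|^2/2$ used there is exactly what the paper derives. (One small fix: $f(\theta)=\langle v,\theta\rangle$ has infinite $\|f\|_{C^s}$ under the paper's definition, since the $C^s$-norm includes $\sup_{x\in\mathbb{R}^d}|f(x)|$ and, for $s<1$, the global H\"older quotient; you must multiply by a smooth cutoff supported near the unit ball.)

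The nonparametric part has a genuine gap, and it is not the remainder-control issue you flag as the "main obstacle" but something more basic: a radial bump together with a sphere prior cannot produce the rate $(d/n)^{s/2}$, even in the Gaussian model. If $\pi_0=\delta_{\{0\}}$ and $\pi_1$ is uniform on the sphere of radius $\delta$, then the statistic $\|\bar X\|^2$ separates the two mixtures once $n\delta^2\gg\sqrt d$; at $\delta\asymp\sqrt{d/n}$ the mean shift in $n\|\bar X\|^2$ is of order $d$ while its fluctuation is of order $\sqrt d$, so $\operatorname{TV}(\mathcal P^n_{\pi_0},\mathcal P^n_{\pi_1})\to1$ as $d\to\infty$. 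Equivalently, $\|\theta\|$ is estimable to accuracy $d^{1/4}/\sqrt n\ll\sqrt{d/n}$, so a functional depending only on $\|\theta\|$ is far easier to estimate than $(d/n)^{s/2}$. The $\chi^2$ computation you sketch, carried out carefully in the Gaussian case (where $\int p_\theta p_{\theta'}/p_0=e^{\langle\theta,\theta'\rangle}$ holds exactly), gives boundedness of the spherical MGF $\mathbb E\,e^{n\delta^2\langle u,u'\rangle}$ only when $n\delta^2\lesssim\sqrt d$, i.e.\ $\delta\lesssim d^{1/4}/\sqrt n$, yielding the suboptimal rate $d^{s/4}n^{-s/2}$. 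Using a product Gaussian prior does not repair this either. Your proposed fallback — Fano over a packing of the sphere for the \emph{same} radial bump — also fails, because all packing points share the same norm, hence the same value of $f$, so no separation of hypotheses is available.

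The paper circumvents this precisely by \emph{not} trying to beat a single fixed functional with a two-prior argument. Instead it constructs a family $\mathcal F$ of bumps centered at the points of an $\eps$-net of a small sphere, so that estimating $f(\theta)$ for every $f\in\mathcal F$ within accuracy $\ll(d/n)^{s/2}$ would let one identify which net point $\theta$ is closest to; a Fano-type bound for \emph{parameter} estimation over the net (Theorem 2.5 of \cite{Tsybakov}, using the same KL estimate $K(P_\theta\|P_{\theta'})\leq M\|\theta-\theta'\|^2/2$) then rules this out, giving $\sup_{f\in\mathcal F}\inf_{\hat T_n}\sup_\theta\|\hat T_n-f(\theta)\|\gtrsim(d/n)^{s/2}$. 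This reduction is the key idea you are missing: the $\sup_f$ in the statement is exploited by letting the hard functional \emph{depend} on which net point is in play, rather than fixing one functional in advance. The log-concave concentration machinery (Theorem \ref{Chen}) is not needed here at all; the lower bound uses only the elementary Taylor bound on the KL divergence.
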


The proof of this result is similar to the proof of Theorem 2.2 in \cite{Koltchinskii_Zhilova_19} in the Gaussian case. 
Some further comments will be provided in Section \ref{Sec:lower_bounds}.

Corollary \ref{cor_L_2} also implies that, for all $\theta\in {\mathbb R}^d,$ 
\begin{align*}
\|f_k(\hat \theta)-f(\theta)\|_{L_2({\mathbb P}_{\theta})} \leq \frac{\sigma_f(\theta)}{\sqrt{n}}
+ C\|f\|_{C^s} \Bigl[\sqrt{\frac{c(V)}{n}}\Bigl(\frac{d}{n}\Bigr)^{\rho/2} + \Bigl(\sqrt{\frac{d}{n}}\Bigr)^s\Bigr],
\end{align*}
where $C$ is a constant depending on $M,L,m,s.$
If $d\lesssim n^{\alpha}$ for some $\alpha\in (0,1)$ and $s>\frac{1}{1-\alpha},$ it easily follows 
that, for all $B>0,$ 
\begin{align}
\label{upper_n_to_infty}
\limsup_{n\to\infty}\sup_{(f,\theta): \frac{\|f\|_{C^s}}{\sigma_f(\theta)}\leq B}
\frac{\sqrt{n}\|f_k(\hat \theta)-f(\theta)\|_{L_2({\mathbb P}_{\theta})}}
{\sigma_f(\theta)} \leq 1.
\end{align}


The following minimax lower bound will be proved in Section \ref{Sec:lower_bounds}.

\begin{proposition}
\label{min_max_lower_local} 
Suppose Assumption \ref{Assump_main} holds and let $f\in C^s$ for some $s=1+\rho, \rho\in (0,1].$
Then, for all $c>0$ and all $\theta_0\in {\mathbb R}^d,$
\begin{align*}
&
\inf_{\hat T_n}\sup_{\|\theta-\theta_0\|\leq \frac{c}{\sqrt{n}}}\frac{\sqrt{n}\|\hat T_n-f(\theta)\|_{L_2({\mathbb P}_{\theta})}}{\sigma_f(\theta)}
\geq 1- \frac{3\pi}{\sqrt{8m}c}
 -\frac{2}{\sqrt{m}}\frac{\|f\|_{C^s}}{\sigma_f(\theta_0)} 
 \Bigl(\frac{c}{\sqrt{n}}\Bigr)^{\rho},
 \end{align*}
 where the infimum is taken over all estimators $\hat T_n=\hat T_n(X_1,\dots, X_n).$
\end{proposition}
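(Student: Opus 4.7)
My plan is to combine a one-dimensional submodel reduction with the van Trees (Bayesian Cram\'er--Rao) inequality applied with the standard cosine-squared prior.

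\textbf{Step 1 (one-parameter submodel).} Set $g:=f'(\theta_0)$ and choose the unit vector $v:=\mathcal{I}^{-1}g/\|\mathcal{I}^{-1}g\|$, and consider the subfamily $\theta(t)=\theta_0+tv$ for $t\in[-A,A]$ with $A:=c/\sqrt{n}$; since $\|v\|=1$, the entire subfamily lies in the constraint set $\{\|\theta-\theta_0\|\le c/\sqrt n\}$. In this location subfamily the per-sample Fisher information for $t$ is $I:=\langle\mathcal{I}v,v\rangle=\sigma_f^2(\theta_0)/\|\mathcal{I}^{-1}g\|^2$, and the induced functional $\psi(t):=f(\theta(t))$ satisfies $\psi'(0)=\langle g,v\rangle=\sigma_f^2(\theta_0)/\|\mathcal{I}^{-1}g\|$. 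In particular $(\psi'(0))^2/(nI)=\sigma_f^2(\theta_0)/n$, matching the efficient asymptotic variance: $v\propto\mathcal{I}^{-1}g$ is the (unique) choice that saturates Cauchy--Schwarz.

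\textbf{Step 2 (van Trees).} Take the prior density $\pi(t)=A^{-1}\cos^2(\pi t/(2A))\mathbf{1}_{[-A,A]}(t)$, which is symmetric, $C^1$, and has $J(\pi):=\int(\pi')^2/\pi\,dt=\pi^2/A^2=\pi^2 n/c^2$. Van Trees gives, for every estimator $\hat T_n$,
\[
\sup_{\|\theta-\theta_0\|\le c/\sqrt n}\mathbb{E}_\theta(\hat T_n-f(\theta))^2\;\ge\;\int\pi(t)\mathbb{E}_{\theta(t)}(\hat T_n-\psi(t))^2\,dt\;\ge\;\frac{\bigl(\int\pi(t)\psi'(t)\,dt\bigr)^2}{nI+J(\pi)}.
\]

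\textbf{Step 3 (bounds on numerator and denominator).} The Hölder bound $|\psi'(t)-\psi'(0)|\le\|f\|_{C^s}|t|^\rho$ (using $\|v\|=1$) gives $\int\pi\,\psi'\ge\psi'(0)-\|f\|_{C^s}(c/\sqrt n)^\rho$. Assumption \ref{Assump_main}(ii) implies $\|\mathcal{I}^{-1}g\|^2\le\sigma_f^2(\theta_0)/m$, and therefore $nI+J(\pi)\le(n\sigma_f^2(\theta_0)/\|\mathcal{I}^{-1}g\|^2)\bigl(1+\pi^2/(mc^2)\bigr)$.

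\textbf{Step 4 (assemble and tidy constants).} Inserting these estimates, dividing numerator and denominator by $n\sigma_f^2(\theta_0)/\|\mathcal{I}^{-1}g\|^2$, and taking square roots yields a bound of the form
\[
\sqrt{n}\;\frac{\|\hat T_n - f(\theta_\star)\|_{L_2(\mathbb{P}_{\theta_\star})}}{\sigma_f(\theta_0)}\;\ge\;\frac{1-\|f\|_{C^s}(c/\sqrt n)^\rho/(\sigma_f(\theta_0)\sqrt m)}{\sqrt{1+\pi^2/(mc^2)}}
\]
at some $\theta_\star$ in the ball. Passing from $\sigma_f(\theta_0)$ to $\sigma_f(\theta)$ (the $L_2$-norm in the statement is normalized by $\sigma_f(\theta)$, not by $\sigma_f(\theta_0)$) uses the elementary estimate $|\sigma_f^2(\theta)-\sigma_f^2(\theta_0)|\lesssim m^{-1}\|f\|_{C^s}^2(c/\sqrt n)^\rho$ and produces a second, matched Hölder-type correction; combining it with the correction in the numerator accounts for the coefficient $2/\sqrt m$. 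Finally, applying elementary inequalities of the form $1/\sqrt{1+x}\ge 1-(3/(2\sqrt 2))\sqrt x$ (or a Taylor-type variant) and $(1-a)(1-b)\ge 1-a-b$ produces the claimed additive form with coefficients $3\pi/\sqrt{8m}$ and $2/\sqrt m$.

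The main obstacle is structural rather than technical: one must find a one-dimensional submodel whose Cram\'er--Rao bound equals precisely the target $\sigma_f^2(\theta_0)/n$, which forces the direction $v\propto\mathcal{I}^{-1}f'(\theta_0)$. Once that is in place, the remaining steps --- bounding the Hölder bias in $\psi'$, controlling the variation of $\sigma_f$ on the ball, evaluating $J(\pi)$ for the cosine-squared prior, and simplifying $(1+\pi^2/(mc^2))^{-1/2}$ --- form a careful but essentially routine book-keeping of numerical constants.
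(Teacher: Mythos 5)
Your proof is correct and reaches the stated inequality, but it takes a genuinely different route from the paper's. The paper derives Proposition~\ref{min_max_lower_local} as a corollary of Theorem~\ref{thm:functional:lower:bound}, which is in turn built on an equivariant Chapman--Robbins/van Trees inequality (Lemma~\ref{prop:equivariant:chapman:robbins:ineq}); that framework requires the prior to satisfy the rather restrictive smoothness condition (P2) (twice differentiable with bounded and Lipschitz $\pi^{(j)}$, $j\le 2$), and this is exactly why the paper is forced to use the $\cos^3$ prior with $\mathcal{J}_\pi=9/2$ rather than the classical $\cos^2$ prior -- for $\cos^2$ the second derivative jumps at the endpoints, so it fails (P2). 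You instead invoke the classical (one-dimensional) van Trees inequality, which only needs the prior to be $C^1$ and vanish at the endpoints, so the $\cos^2$ prior with $J(\pi)=\pi^2/A^2$ is admissible; this makes the argument more elementary and self-contained. A second small difference: the paper chooses the optimal direction $v\propto A^{-1}f'(\theta_0)$ with $A=\mathcal{I}+\frac{\mathcal{J}_\pi}{\delta^2 n}I_d$, which cleanly yields $\|A^{-1/2}f'(\theta_0)\|$ and then the two correction terms by an exact matrix identity, while you fix $v\propto\mathcal{I}^{-1}f'(\theta_0)$ and recover the same constant via the elementary estimate $1/\sqrt{1+x}\ge 1-\tfrac{3}{2\sqrt 2}\sqrt{x}$. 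Both land on $\tfrac{3\pi}{\sqrt{8m}c}$ and, after passing from $\sigma_f(\theta_0)$ to $\sigma_f(\theta)$ exactly as the paper does, on the $\tfrac{2}{\sqrt m}$ coefficient. Two cosmetic points to tighten: (i) you should note that if the numerator after the H\"older correction is negative the claim is vacuous, so the square-root step is legitimate; and (ii) what you actually need for the final normalization step is the bound $|\sigma_f(\theta)-\sigma_f(\theta_0)|\le m^{-1/2}\|f\|_{C^s}(c/\sqrt n)^\rho$ on $\sigma_f$ itself (not its square), which follows from $\sigma_f(\theta)=\|\mathcal{I}^{-1/2}f'(\theta)\|$ by the reverse triangle inequality.
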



The bound of Proposition \ref{min_max_lower_local} easily implies that, for all $B>0,$
\begin{align*}
\lim_{c\to \infty}\liminf_{n\to\infty}\inf_{(f,\theta_0): \frac{\|f\|_{C^s}}{\sigma_f(\theta_0)}\leq B}\inf_{\hat T_n}\sup_{\|\theta-\theta_0\|\leq \frac{c}{\sqrt{n}}} 
\frac{\sqrt{n}\|\hat T_n- f(\theta)\|_{L_2({\mathbb P}_{\theta})}}{\sigma_f(\theta)}\geq 1.
\end{align*}
Along with \eqref{upper_n_to_infty}, it shows local asymptotic minimaxity of estimator $f_k(\hat \theta).$

The next corollaries will be based on the results by Rio \cite{Rio} on convergence rates in CLT in Wasserstein type distances. For r.v. $\eta_1, \eta_2$
and a convex non-decreasing function $\psi : {\mathbb R}_+\mapsto {\mathbb R}_+$ with $\psi(0)=0,$ define the Wasserstein $\psi$-distance 
between $\eta_1$ and $\eta_2$ as 
\begin{align*}
W_{\psi} (\eta_1,\eta_2) := \inf\Bigl\{\|\eta_1'-\eta_2'\|_{\psi}: \eta_1'\overset{d}{=} \eta_1, \eta_2'\overset{d}{=} \eta_2\Bigr\}.
\end{align*}
For $\psi(u)=u^p, u\geq 0, p\geq 1,$ we will use the notation $W_p$ instead of $W_{\psi}.$ For $\psi=\psi_{\alpha}, \alpha >0,$
we will modify the above definition using a version of $\psi$-norm defined in terms of the moments. Let $\eta_1,\dots, \eta_n$
be i.i.d. copies of a mean zero r.v. $\eta$ with ${\mathbb E}\eta^2 =1.$ It was proved in \cite{Rio} (see Theorem 4.1 and equation (4.3))
that for all $r\in (1,2],$
\begin{align*}
W_r \Bigl(\frac{\eta_1+\dots+\eta_n}{\sqrt{n}}, Z\Bigr)
\lesssim \frac{{\mathbb E}^{1/r}\eta^{r+2}}{\sqrt{n}},
\end{align*}
where $Z\sim N(0,1).$
Applying this bound to $\eta:= \frac{\langle V'(\xi), {\mathcal I}^{-1} f'(\theta)\rangle}{\sigma_f(\theta)}$ yields
\begin{align*}
W_2\Bigl(\frac{1}{\sqrt{n}}\sum_{j=1}^n \langle V'(\xi_j), {\mathcal I}^{-1} f'(\theta)\rangle, \sigma_f(\theta) Z\Bigr)
\lesssim \frac{{\mathbb E}^{1/2} \langle V'(\xi), {\mathcal I}^{-1} f'(\theta)\rangle^4}{\sigma_f(\theta)}n^{-1/2}.
\end{align*}
Thus, Theorem \ref{main_theorem} implies the following corollary.

\begin{corollary}
\label{normal_approx_f_k}
Under the conditions of Theorem \ref{main_theorem}, for all $\theta\in {\mathbb R}^d$
\begin{align*}
&
W_{2, {\mathbb P}_{\theta}}\Bigl(\sqrt{n}(f_k(\hat \theta)-f(\theta)), \sigma_f(\theta) Z\Bigr)
\\ 
&
\leq C_1\frac{{\mathbb E}^{1/2} \langle V'(\xi), {\mathcal I}^{-1} f'(\theta)\rangle^4}{\sigma_f(\theta)} n^{-1/2}
+ C_2\|f\|_{C^s}\Bigl[\sqrt{\frac{c(V)}{n}}\Bigl(\frac{d}{n}\Bigr)^{\rho/2} + \Bigl(\sqrt{\frac{d}{n}}\Bigr)^s\Bigr],
\end{align*}
where $C_1>0$ is an absolute constant and $C_2>0$ is a constant that could depend on $M,L,m,s.$
\end{corollary}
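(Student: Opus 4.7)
The plan is to obtain the bound by a single triangle inequality for $W_2$, using the linearization
\[
S_n := \frac{1}{\sqrt{n}}\sum_{j=1}^n \langle V'(\xi_j),\mathcal{I}^{-1}f'(\theta)\rangle
\]
as the intermediate object, and then controlling the two resulting pieces separately: one via the deterministic approximation from Theorem \ref{main_theorem}, the other via the quantitative CLT bound of Rio already recalled just above the corollary. Concretely, I would write
\[
W_{2,\mathbb{P}_\theta}\bigl(\sqrt{n}(f_k(\hat\theta)-f(\theta)),\,\sigma_f(\theta)Z\bigr)
\leq W_{2,\mathbb{P}_\theta}\bigl(\sqrt{n}(f_k(\hat\theta)-f(\theta)),\,S_n\bigr)
+ W_2\bigl(S_n,\,\sigma_f(\theta)Z\bigr),
\]
which is valid because $S_n$ is defined on the same probability space $(\Omega,\mathbb{P}_\theta)$.

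For the first term, I would use the tautological coupling: since $\sqrt{n}(f_k(\hat\theta)-f(\theta))$ and $S_n$ are both random variables on $(\Omega,\mathbb{P}_\theta)$, $W_{2,\mathbb{P}_\theta}$ between them is dominated by their $L_2(\mathbb{P}_\theta)$-distance. From the moment formulation \eqref{psi_alpha} of the $\psi_{2/3}$-norm, the $L_2$-norm is in turn dominated (up to an absolute constant) by the $L_{\psi_{2/3}}$-norm. Pulling out the $\sqrt{n}$ and invoking Theorem \ref{main_theorem} gives
\[
W_{2,\mathbb{P}_\theta}\bigl(\sqrt{n}(f_k(\hat\theta)-f(\theta)),\,S_n\bigr)
\lesssim_{L,M,m,s} \|f\|_{C^s}\Bigl[\sqrt{\tfrac{c(V)}{n}}\bigl(\tfrac{d}{n}\bigr)^{\rho/2}+\bigl(\sqrt{\tfrac{d}{n}}\bigr)^s\Bigr],
\]
which, up to constants, is the second term in the statement.

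For the second term, I would rescale: set $\tilde\eta_j := \eta_j/\sigma_f(\theta)$ with $\eta_j := \langle V'(\xi_j),\mathcal{I}^{-1}f'(\theta)\rangle$. Under $\mathbb{P}_\theta$ the $\xi_j$ are i.i.d. $\sim P$, so the $\tilde\eta_j$ are i.i.d., mean zero, unit variance; Rio's bound \cite{Rio} applied with $r=2$ yields
\[
W_2\Bigl(\tfrac{1}{\sqrt{n}}\sum_{j=1}^n \tilde\eta_j,\,Z\Bigr)\lesssim \frac{\mathbb{E}^{1/2}|\tilde\eta|^4}{\sqrt{n}}.
\]
Multiplying through by the scalar $\sigma_f(\theta)$ (and using the $1$-homogeneity $W_2(cX,cY)=cW_2(X,Y)$ for $c\geq 0$) converts this to the first term in the statement, namely $C_1\,\mathbb{E}^{1/2}\langle V'(\xi),\mathcal{I}^{-1}f'(\theta)\rangle^4/(\sigma_f(\theta)\sqrt{n})$. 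Combining with the first piece delivers the bound.

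The argument is essentially a routine combination, so I do not expect a serious obstacle; the only step that requires a moment of care is the passage from the $L_{\psi_{2/3}}$-norm in Theorem \ref{main_theorem} to the $L_2$-norm needed for the $W_2$ coupling, which is transparent once one reads the $\psi_{\alpha}$-norm through the moment formula \eqref{psi_alpha} (specializing to $p=2$). Verifying homogeneity of $W_2$ under scalar multiplication and checking the applicability of Rio's Theorem~4.1 (finiteness of the fourth moment of $\eta$, which follows from the subgaussian estimate \eqref{eq_V_'_subgaussian}) complete the picture.
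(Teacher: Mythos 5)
Your proposal is correct and follows exactly the route the paper takes: Rio's $W_2$ bound for the linearization $S_n$, combined with Theorem \ref{main_theorem} via the triangle inequality for $W_2$ and the domination of the $L_2$-norm by the $\psi_{2/3}$-norm. Nothing to add.
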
 

Using \eqref{eq_V_'_subgaussian}, it is easy to check that, under Assumption \ref{Assump_main}, 
\begin{align*}
{\mathbb E}^{1/2} \langle V'(\xi), {\mathcal I}^{-1} f'(\theta)\rangle^4\lesssim \frac{M}{m^2} \|f'(\theta)\|^2 
\end{align*}
and, in addition, $\sigma_f^2(\theta)\geq M^{-1}\|f'(\theta)\|^2.$ This yields
\begin{align*}
\frac{{\mathbb E}^{1/2} \langle V'(\xi), {\mathcal I}^{-1} f'(\theta)\rangle^4}{\sigma_f(\theta)}\lesssim_{M,m} \|f'(\theta)\|
\lesssim_{M,m}\|f\|_{C^s}.
\end{align*}
Therefore, if $d\leq n^{\alpha}$ for some $\alpha\in (0,1)$ and $s>\frac{1}{1-\alpha},$ then 
\begin{align*}
\sup_{\|f\|_{C^s}\leq 1}\sup_{\theta \in {\mathbb R}^d}W_{2, {\mathbb P}_{\theta}}\Bigl(\sqrt{n}(f_k(\hat \theta)-f(\theta)), \sigma_f(\theta) Z\Bigr)
\to 0\ {\rm as}\ n\to\infty,
\end{align*}
implying asymptotic normality of estimator $f_k(\hat \theta)$ of $f(\theta)$ with rate $\sqrt{n}$ and limit variance $\sigma_f^2(\theta).$
It is also easy to show that, under the same conditions on $d$ and $s,$ we have, for all $B>0,$
\begin{align*}
\sup_{(f,\theta): \frac{\|f\|_{C^s}}{\sigma_f(\theta)}\leq B} 
W_{2, {\mathbb P}_{\theta}}\Bigl(\frac{\sqrt{n}(f_k(\hat \theta)-f(\theta))}{\sigma_f(\theta)}, Z\Bigr)
\to 0\ {\rm as}\ n\to\infty,
\end{align*}
which implies 
\begin{align*}
\sup_{(f,\theta): \frac{\|f\|_{C^s}}{\sigma_f(\theta)}\leq B} 
\sup_{x\in {\mathbb R}}\Bigl|{\mathbb P}_{\theta}\Bigl\{\frac{\sqrt{n}(f_k(\hat \theta)-f(\theta))}{\sigma_f(\theta)}\leq x\Bigr\}-{\mathbb P}\{Z\leq x\}\Bigr| \to 0\ {\rm as}\ n\to\infty.
\end{align*}

It was also proved in \cite{Rio}, Theorem 2.1 that, for i.i.d. copies $\eta_1,\dots, \eta_n$ of mean zero r.v. $\eta$
with ${\mathbb E}\eta^2=1$ and $\|\eta\|_{\psi_1}<\infty$ and for 
some constant $C(\|\eta\|_{\psi_1})<\infty,$ 
\begin{align*}
W_{\psi_1} \Bigl(\frac{\eta_1+\dots+\eta_n}{\sqrt{n}}, Z\Bigr)
\lesssim \frac{C(\|\eta\|_{\psi_1})}{\sqrt{n}}.
\end{align*}
We will again apply this to $\eta:= \frac{\langle V'(\xi), {\mathcal I}^{-1} f'(\theta)\rangle}{\sigma_f(\theta)}.$
In this case, by Lemma \ref{bd_on_V'}, we have 
\begin{align*}
\|\langle V'(\xi), {\mathcal I}^{-1} f'(\theta)\rangle\|_{\psi_1} \lesssim \frac{\sqrt{M}}{m} \|f'(\theta)\|.
\end{align*}
Also, $\sigma_f(\theta)\geq \frac{\|f'(\theta)\|}{\sqrt{M}},$ implying $\|\eta\|_{\psi_1} \lesssim \frac{M}{m}.$
As a result, we get 
\begin{align*}
W_{\psi_1}\Bigl(\frac{1}{n}\sum_{j=1}^n \langle V'(\xi_j), {\mathcal I}^{-1} f'(\theta)\rangle, \frac{\sigma_f(\theta) Z}{\sqrt{n}}\Bigr)
\lesssim_{M,m} \frac{\sigma_f(\theta)}{n}.
\end{align*}
Combining this with the bound of Theorem \ref{main_theorem} yields the following extension of Corollary \ref{cor_L_2}.

\begin{corollary}
\label{cor_bd_psi}
Under the conditions of Theorem \ref{main_theorem}, for all convex non-decreasing functions 
$\psi : {\mathbb R}_+\rightarrow {\mathbb R}_+$ with $\psi(0)=0,$ 
satisfying the condition $\psi(u)\leq \psi_{2/3}(c u), u\geq 0$
for some constant $c>0,$ 
\begin{align*}
&
\sup_{\theta\in {\mathbb R}^d}\Bigl|\|f_k(\hat \theta)-f(\theta)\|_{L_{\psi}({\mathbb P}_{\theta})}- \frac{\sigma_f(\theta)}{\sqrt{n}}\|Z\|_{\psi}\Bigr|
\\
&
\lesssim_{L,M,m,s}
\|f\|_{C^s} \Bigl[\sqrt{\frac{c(V)}{n}}\Bigl(\frac{d}{n}\Bigr)^{\rho/2} + \Bigl(\sqrt{\frac{d}{n}}\Bigr)^s\Bigr].
\end{align*}
\end{corollary}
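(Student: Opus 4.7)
\noindent\textbf{Proof plan for Corollary \ref{cor_bd_psi}.} The plan is to write $f_k(\hat\theta)-f(\theta)$ as the sum of a leading linear statistic and a remainder, control the remainder by Theorem \ref{main_theorem}, and then transfer the Wasserstein bound into an Orlicz-norm comparison. Set
\[
S_\theta := n^{-1}\sum_{j=1}^n \langle V'(\xi_j),\mathcal{I}^{-1}f'(\theta)\rangle,\qquad R_\theta := f_k(\hat\theta)-f(\theta)-S_\theta.
\]
Then $\|f_k(\hat\theta)-f(\theta)\|_{L_\psi(\mathbb{P}_\theta)}$ is, by the triangle inequality, within $\|R_\theta\|_{L_\psi(\mathbb{P}_\theta)}$ of $\|S_\theta\|_{L_\psi(\mathbb{P}_\theta)}$, and the target bound will follow from controlling these two pieces separately.

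The hypothesis $\psi(u)\le \psi_{2/3}(cu)$ immediately implies $\|\eta\|_{L_\psi}\le c\|\eta\|_{L_{\psi_{2/3}}}$ for every r.v.\ $\eta$ (evaluating the Orlicz functional at $t=c\|\eta\|_{L_{\psi_{2/3}}}$). Applying Theorem \ref{main_theorem} gives
\[
\|R_\theta\|_{L_\psi(\mathbb{P}_\theta)}\lesssim_c \|R_\theta\|_{L_{\psi_{2/3}}(\mathbb{P}_\theta)}\lesssim_{L,M,m,s}\|f\|_{C^s}\Bigl[\sqrt{c(V)/n}\,(d/n)^{\rho/2}+(\sqrt{d/n})^s\Bigr],
\]
which is already the desired error term.

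To handle $\|S_\theta\|_{L_\psi}$, I would apply Rio's Wasserstein-$\psi_1$ CLT to the normalized summands $\eta_j=\langle V'(\xi_j),\mathcal{I}^{-1}f'(\theta)\rangle/\sigma_f(\theta)$, which are i.i.d.\ mean zero with unit variance. By Lemma \ref{bd_on_V'} and the lower bound $\sigma_f(\theta)\ge \|f'(\theta)\|/\sqrt{M}$ (both used in the paragraph preceding the corollary), $\|\eta_j\|_{\psi_1}\lesssim M/m$, so Rio's theorem yields
\[
W_{\psi_1}\!\bigl(S_\theta,\,(\sigma_f(\theta)/\sqrt{n})Z\bigr)\lesssim_{M,m}\sigma_f(\theta)/n.
\]
The moment characterization \eqref{psi_alpha} gives $\|\cdot\|_{L_{\psi_{2/3}}}\lesssim\|\cdot\|_{L_{\psi_1}}$, so composing with the hypothesis on $\psi$ yields $\|\cdot\|_{L_\psi}\lesssim_c \|\cdot\|_{L_{\psi_1}}$. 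Taking a near-optimal coupling in the definition of $W_{\psi_1}$ and applying the triangle inequality in $L_\psi$ gives
\[
\bigl|\|S_\theta\|_{L_\psi(\mathbb{P}_\theta)}-(\sigma_f(\theta)/\sqrt{n})\|Z\|_\psi\bigr|\lesssim_{L,M,m,s}\sigma_f(\theta)/n.
\]

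To finish, combine this with the bound on $\|R_\theta\|_{L_\psi}$ via one more triangle inequality. It remains to check that $\sigma_f(\theta)/n$ is absorbed into the stated error. Since $\sigma_f(\theta)\lesssim_m\|f\|_{C^s}$ and, by Cramér--Rao ($\Sigma\succeq\mathcal{I}^{-1}$) together with the Poincaré inequality applied to linear functionals, $c(V)\ge\|\Sigma\|\ge 1/M$, one has $\sqrt{c(V)/n}\,(d/n)^{\rho/2}\gtrsim_M n^{-(1+\rho)/2}\ge n^{-1}$ for $\rho\in(0,1]$, so $\sigma_f(\theta)/n\lesssim_{L,M,m,s}\|f\|_{C^s}\sqrt{c(V)/n}(d/n)^{\rho/2}$, as required. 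The only nontrivial step is the $W_{\psi_1}$-to-$L_\psi$ transfer, which relies on the norm comparison $\|\cdot\|_{L_\psi}\lesssim\|\cdot\|_{L_{\psi_1}}$; everything else is a bookkeeping of Theorem \ref{main_theorem} and Rio's estimate.
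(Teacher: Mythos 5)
Your proof follows the same route the paper intends: split $f_k(\hat\theta)-f(\theta)$ into the linear statistic $S_\theta$ plus remainder $R_\theta$, bound $R_\theta$ in $L_\psi$ via Theorem \ref{main_theorem} and the norm comparison $\|\cdot\|_{L_\psi}\lesssim_c\|\cdot\|_{L_{\psi_{2/3}}}$, and then use Rio's $W_{\psi_1}$ estimate (with $\|\eta\|_{\psi_1}\lesssim M/m$ from Lemma \ref{bd_on_V'} and $\sigma_f(\theta)\geq\|f'(\theta)\|/\sqrt{M}$) to compare $\|S_\theta\|_{L_\psi}$ to $(\sigma_f(\theta)/\sqrt{n})\|Z\|_\psi$. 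You have merely made explicit two steps the paper leaves implicit — the coupling argument that transfers a $W_{\psi_1}$ bound into an Orlicz-norm difference, and the observation that $\sigma_f(\theta)/n$ is absorbed into $\|f\|_{C^s}\sqrt{c(V)/n}(d/n)^{\rho/2}$ using $\sigma_f(\theta)\lesssim_m\|f\|_{C^s}$ and $c(V)M\geq 1$ — and both are correct.
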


\begin{remark} 
\normalfont
Similar results were obtained in \cite{Koltchinskii_Zhilova} in the case of Gaussian 
shift models, in \cite{Koltchinskii_Zhilova_2020} in the case of more general Poincar\'e random shift models 
and in \cite{Koltchinskii_2017, Koltchinskii_2018, Koltchinskii_Zhilova_19} in the case of Gaussian models with 
unknown covariance and unknown mean and covariance (the analysis becomes much more involved in the case when the functional depends on unknown covariance). In \cite{Koltchinskii_2020}, the proposed higher order bias reduction method was studied in the case of general models with a high-dimensional parameter $\theta$ for which there exists an estimator $\hat \theta$ admitting high-dimensional normal approximation. 
\end{remark}

\begin{remark}
\normalfont
If ${\mathbb E}\xi=0,$ one can also use $\bar X=\frac{X_1+\dots+X_n}{n}$ as an estimator of $\theta$ and construct the corresponding 
functions $\bar f_k$
based on this estimator. In this case, a bound similar to \eqref{min_max_upper} holds for estimator $\bar f_k(\bar X),$ so, it is also minimax 
optimal. This follows from Theorem 2, \cite{Koltchinskii_Zhilova_2020} along with the bound on Poincar\'e constant $c(V)$ (see Remark \ref{KLS}). Normal approximation of estimator $\bar f_k(\bar X)$ similar to Corollary \ref{normal_approx_f_k} also holds (see \cite{Koltchinskii_Zhilova_2020}). However, the limit variance of estimator $\bar f_k(\bar X)$ is not equal to $\sigma_f^2(\theta),$
it is rather equal to $\langle \Sigma_{\xi} f'(\theta), f'(\theta)\rangle.$ Since $X$ is an unbiased estimator of $\theta$ (note that ${\mathbb E}\xi=0$), it follows from the Cram\'er-Rao bound that 
$\Sigma_X=\Sigma_{\xi}\succeq {\mathcal I}^{-1}.$ 
This fact implies that the limit variance of estimator $\bar f_k(\bar X)$ is suboptimal: 
\begin{align*}
\langle \Sigma_{\xi} f'(\theta), f'(\theta)\rangle\geq \langle {\mathcal I}^{-1} f'(\theta), f'(\theta)\rangle
\end{align*}
and this estimator 
is not asymptotically efficient. This was the main motivation for the development of estimators $f_k(\hat \theta)$ based on the MLE
in the current paper. We conjecture that asymptotic efficiency also holds when the MLE is replaced by Pitman's estimator. Since MLE is defined implicitly as a solution of an optimization problem, there is an additional layer of difficulties in the analysis of the problem comparing with the case of $\bar X.$ Similar problems in the case of log-concave location-scale families seem to be much more challenging.
\end{remark}

\begin{remark}
\normalfont
The proof of Theorem \ref{main_theorem} could be easily modified and, in fact, significantly simplified 
to obtain the following result under somewhat different assumptions than Assumption \ref{Assump_main} (they are stronger in the sense that the eigenvalues of the Hessian $V''(x)$ are assumed to be bounded away from zero uniformly in $x$).
 
\begin{theorem}
\label{main_theorem_AAA}
Suppose $V$ is twice continuously differentiable and, for some $M,m>0,$ $\|V''\|_{L_{\infty}}\leq M$ and $V''(x)\succeq m I_d, x\in {\mathbb R}^d.$ 
Let $f\in C^s$ for some $s=k+1+\rho,$ $k\geq 0,$ $\rho\in (0,1].$
Then, for all $d\lesssim n,$ 
\begin{align*}
&
\sup_{\theta \in {\mathbb R}^d}\Bigl\|f_k(\hat \theta) - f(\theta)- n^{-1}\sum_{j=1}^n \langle V'(\xi_j), {\mathcal I}^{-1} f'(\theta)\rangle\Bigr\|_{L_{\psi_{1}}({\mathbb P}_{\theta})}
\\
&
\lesssim_{M,m,s} \|f\|_{C^s} \Bigl[\frac{1}{\sqrt{n}}\Bigl(\frac{d}{n}\Bigr)^{\rho/2} + \Bigl(\sqrt{\frac{d}{n}}\Bigr)^s\Bigr].
\end{align*}
\end{theorem}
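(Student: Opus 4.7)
The plan is to adapt the proof of Theorem \ref{main_theorem} while exploiting the uniform strong convexity $V''(x)\succeq m I_d$ to obtain quantitatively sharper bounds and a streamlined argument. By translation equivariance of the MLE we may assume $\theta=0$, so $X_j=\xi_j$. Two key consequences of the stronger assumption will be used throughout: (a) by the Brascamp--Lieb inequality the Poincar\'e constant satisfies $c(V)\lesssim 1/m$, which turns the $\sqrt{c(V)/n}$ factor of Theorem \ref{main_theorem} into $1/\sqrt{n}$ (absorbed into the $m,M$-dependent constant); and (b) by Bakry--\'Emery, $P$ satisfies a log-Sobolev inequality with constant $1/m$, so Lipschitz functions of $(\xi_1,\dots,\xi_n)$ are subgaussian, which is what upgrades the Orlicz norm from $\psi_{2/3}$ to $\psi_1$.

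Next I would derive a quantitative linearization of the MLE. From the first-order condition $n^{-1}\sum_j V'(\xi_j-\hat\theta)=0$ and the fundamental theorem of calculus,
\begin{equation*}
\Bigl[\int_0^1 n^{-1}\sum_{j=1}^n V''(\xi_j-t\hat\theta)\,dt\Bigr]\hat\theta \;=\; n^{-1}\sum_{j=1}^n V'(\xi_j).
\end{equation*}
Under $V''\succeq m I_d$ the bracket is invertible and, writing it as $\mathcal{I}+R_n$, the Lipschitz continuity of $V''$ together with empirical concentration gives $\|R_n\|$ of order $\sqrt{d/n}+\|\hat\theta\|$ in a strong sense. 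Iterating, one obtains $\hat\theta=\mathcal{I}^{-1}\bar V' + r$ with $\bar V':=n^{-1}\sum_j V'(\xi_j)$ and $\|r\|$ of $\psi_1$-norm $\lesssim d/n$ (essentially quadratic in subgaussian quantities of order $\sqrt{d/n}$). Analogous linearizations hold conditionally for each bootstrap iterate $\hat\theta^{(j+1)}$ given the past, yielding subgaussian bounds $\|\hat\theta^{(j+1)}-\hat\theta^{(j)}\|_{L_{\psi_2}}\lesssim \sqrt{d/n}$ with constants independent of the starting point.

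The proof then proceeds by the standard bias/variance decomposition $f_k(\hat\theta)-f(0)=\mathcal{E}_{\mathrm{bias}}+\mathcal{E}_{\mathrm{stoch}}$, where $\mathcal{E}_{\mathrm{bias}}=\mathbb{E}_{0}f_k(\hat\theta)-f(0)=(-1)^k(\mathcal{B}^{k+1}f)(0)$. Via the representation \eqref{representation_B^k}, $(\mathcal{B}^{k+1}f)(0)$ is the expected $(k+1)$-th order finite difference of $f$ along the bootstrap chain; combining $C^s$-smoothness of $f$ ($s=k+1+\rho$) with the subgaussian bounds on chain increments gives $|(\mathcal{B}^{k+1}f)(0)|\lesssim \|f\|_{C^s}(\sqrt{d/n})^{s}$. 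For the stochastic term, expand $f_k$ by Taylor's formula around $0$ and insert the linearization of $\hat\theta$: the leading contribution is $\langle f'(0),\mathcal{I}^{-1}\bar V'\rangle$ (corrections from $\mathcal{B}^j f$ only contribute at orders $\geq (d/n)^{j/2}$ at the base point $0$, so they enter only into the residual), and the remaining $\psi_1$ bound follows by applying log-Sobolev concentration to the Lipschitz map $(\xi_1,\dots,\xi_n)\mapsto f_k(\hat\theta)-\langle \mathcal{I}^{-1}f'(0),\bar V'\rangle$, whose local Lipschitz constant is of order $\sqrt{d}/n+ (d/n)^{(s-1)/2}$.

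The hardest step will be extracting the sharp exponent $(d/n)^{s/2}$ (rather than $(d/n)^{(k+1)/2}$) in the bias bound: this requires factoring the H\"older modulus of $f^{(k+1)}$ out of the iterated difference and controlling the expected product of $k+1$ consecutive chain increments with an additional $\rho$-th power of an increment. The uniform lower bound $V''\succeq m I_d$ is essential here, as it ensures that conditional on any $\hat\theta^{(j)}$ the next increment has the same subgaussian tail as the unconditional MLE, so iterating concentration along the chain does not accumulate dimension-dependent factors beyond $(\sqrt{d/n})^{s}$. A parallel difficulty is bounding the $\psi_1$-norm of the residual $r$ in the MLE linearization uniformly in $\hat\theta$; this is handled by a bootstrap argument combining the a priori bound $\|\hat\theta\|_{L_{\psi_2}}\lesssim \sqrt{d/n}$ with the contractive structure provided by $m$-strong convexity.
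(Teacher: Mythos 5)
Your proposal takes essentially the same route as the paper's own sketch of this theorem: the uniform strong convexity $V''\succeq m I_d$ is used to get a deterministic MLE bound $\|\hat\theta-\theta\|\leq m^{-1}\|n^{-1}\sum_j V'(\xi_j)\|$ from the first-order condition, to make the map $(X_1,\dots,X_n)\mapsto\hat\theta$ globally Lipschitz (so the smoothed truncation $\check\theta$ from the proof of Theorem~\ref{main_theorem} is no longer needed), and, via Bakry--\'Emery plus the Aida--Stroock moment bound, to replace the Poincar\'e factor $p$ by $\sqrt{p}$, which is exactly what upgrades the Orlicz norm from $\psi_{2/3}$ to $\psi_1$. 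The remaining ingredients in your sketch --- the linearization $\hat\theta\approx\theta+\mathcal{I}^{-1}\bar V'$ with a residual of order $d/n$, the integral representation of $\mathcal{B}^{k+1}f$, the reliance on the small MLE bias $\|\mathbb{E}\hat\theta-\theta\|\lesssim d/n$ (not just $\sqrt{d/n}$) to reach the sharp exponent $(\sqrt{d/n})^{s}$, and the observation that the $(\mathcal{B}^{j}f)'(\theta)$ corrections to the linear influence term are of higher order --- all mirror the paper's argument, so this is the same proof rather than a genuinely different one.
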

 
 
 This result implies that, under the conditions of Theorem \ref{main_theorem_AAA}, the second claim of Proposition \ref{prop_min_max_upper} holds without 
 any assumptions on $d$ and on $\Sigma_{\xi}$ and that the bound of Corollary \ref{cor_bd_psi} holds for all 
 convex non-decreasing functions 
$\psi : {\mathbb R}_+\mapsto {\mathbb R}_+$ with $\psi(0)=0,$ 
satisfying the condition $\psi(u)\leq \psi_{1}(c u), u\geq 0$
for some constant $c>0.$
 \end{remark}

\section{Error bounds for the MLE}

Our main goal in this section is to obtain upper bounds on the error $\|\hat \theta-\theta\|$ of MLE $\hat \theta.$
Namely, the following result will be proved.

\begin{theorem}
\label{th_bound_on_hat_theta}
Suppose Assumption \ref{Assump_main} holds and let $t\geq 1.$ 
If $d\vee t\leq \gamma n$ for 
\begin{align}
\label{definition_gamma}
\gamma := c\Bigl(\frac{m}{M}\wedge \frac{m^2}{L \sqrt{M}}\Bigr)^2
\end{align}
with a small enough constant $c>0$, then, with probability at least $1-e^{-t}$,
\begin{align*}
\|\hat \theta-\theta\| \lesssim \frac{\sqrt{M}}{m} \Bigl(\sqrt{\frac{d}{n}}\vee \sqrt{\frac{t}{n}}\Bigr).
\end{align*}
\end{theorem}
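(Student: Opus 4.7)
My plan is to reduce to $\theta=0$ by translation equivariance, bound the score $\nabla F_n(0)$ and obtain a uniform lower bound on the Hessian $\nabla^2 F_n$ on a small deterministic ball around $0$, and finally use a convex localization (``basic inequality'') argument to force $\hat\theta$ into that ball, concluding via the first-order optimality condition. Since $\hat\theta$ is equivariant, $\hat\theta-\theta$ has the same distribution under ${\mathbb P}_\theta$ as under ${\mathbb P}_0$, so we may assume $\theta=0$ and $X_j=\xi_j$. Set $F_n(u) := n^{-1}\sum_{j=1}^n V(\xi_j-u)$, which is strictly convex and $C^2$ with $\nabla F_n(0) = -n^{-1}\sum_j V'(\xi_j)$ (mean zero, since $\mathbb{E}V'(\xi)=0$) and $\nabla^2 F_n(u) = n^{-1}\sum_j V''(\xi_j-u)$, whose expectation at $u=0$ equals $\mathcal{I}$.

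For the score, Lemma \ref{bd_on_V'} gives that $\langle v, V'(\xi)\rangle$ is sub-Gaussian with $\|\cdot\|_{\psi_2}\lesssim \sqrt{M}\,\|v\|$ for every $v\in\mathbb{R}^d$; averaging $n$ iid copies and applying a standard $1/2$-net argument over $S^{d-1}$ (of cardinality $\leq 5^d$) yields
$$\|\nabla F_n(0)\| \lesssim \sqrt{M(d+t)/n}$$
with probability at least $1-e^{-t}/2$. For the Hessian, note that $0 \preceq V''(\xi_j) \preceq MI_d$, so matrix Bernstein applied to the iid symmetric matrices $V''(\xi_j)-\mathcal{I}$ gives $\|\nabla^2 F_n(0)-\mathcal{I}\|\lesssim M\sqrt{(d+t)/n}$ with probability $\geq 1-e^{-t}/2$ (using $\log d \leq d$). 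Under the assumption $d\vee t \lesssim (m/M)^2 n$ (the first term of $\gamma$), this is at most $m/4$, so $\nabla^2 F_n(0)\succeq (3m/4)I_d$. The Lipschitz bound $\|V''\|_{\rm Lip}\leq L$ then supplies the deterministic estimate $\|\nabla^2 F_n(u)-\nabla^2 F_n(0)\| \leq L\|u\|$, so on the event $\mathcal{E}$ of probability $\geq 1-e^{-t}$ where both bounds hold,
$$\nabla^2 F_n(u) \succeq \tfrac{m}{2} I_d \qquad \text{for all } \|u\|\leq R:=\tfrac{m}{4L}.$$

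On $\mathcal{E}$ I now run the localization argument. Suppose for contradiction that $\|\hat\theta\|>R$. By convexity of $F_n$ and $F_n(\hat\theta)\leq F_n(0)$, the point $u^\ast := R\hat\theta/\|\hat\theta\|\in\partial B(0,R)$ satisfies $F_n(u^\ast)\leq F_n(0)$. On the other hand, the second-order Taylor expansion of $F_n$ along the segment $[0,u^\ast]$, combined with the uniform Hessian lower bound, yields
$$F_n(u^\ast)-F_n(0)\geq -\|\nabla F_n(0)\|R+\tfrac{m}{4}R^2 = R\bigl(\tfrac{mR}{4}-\|\nabla F_n(0)\|\bigr),$$
which is strictly positive whenever $\|\nabla F_n(0)\|<mR/4=m^2/(16L)$. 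The score bound shows this last inequality holds under the second term of the hypothesis, namely $(d+t)/n \lesssim (m^2/(L\sqrt{M}))^2$. Hence $\hat\theta\in B(0,R)$, and the identity
$$-\nabla F_n(0) = \Bigl(\int_0^1 \nabla^2 F_n(s\hat\theta)\,ds\Bigr)\hat\theta =: H\hat\theta$$
together with $H\succeq (m/2)I_d$ gives $\|\hat\theta\|\leq (2/m)\|\nabla F_n(0)\|\lesssim (\sqrt{M}/m)(\sqrt{d/n}\vee\sqrt{t/n})$, as required.

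The main delicate point is obtaining the uniform lower bound on $\nabla^2 F_n$ over $B(0,R)$ without inflating $\gamma$ by chaining or covering factors inherent to uniform matrix concentration. The Lipschitz hypothesis on $V''$ sidesteps this by reducing the uniform statement to a single matrix concentration at the origin plus a deterministic Lipschitz perturbation of size $L\|u\|$, which is precisely why the two terms in the definition of $\gamma$ appear naturally --- the factor $m/M$ from balancing the matrix Bernstein fluctuation against the spectral gap $m$ of $\mathcal{I}$, and the factor $m^2/(L\sqrt{M})$ from requiring the strong-convexity radius $R\asymp m/L$ to dominate the score magnitude $\sqrt{Md/n}$.
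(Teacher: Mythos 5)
Your proof is correct and uses the same ingredients as the paper's argument: the sub-Gaussian score bound (the paper's Corollary 3.1), a Hessian concentration bound at the origin (the paper's Proposition 3.3), the Lipschitz bound $\|V''\|_{\rm Lip}\le L$ to control Hessian perturbations, and a convexity localization step (the paper's Lemma 3.2 is exactly your contradiction argument restated). The only substantive difference is in packaging. The paper works directly at the target scale $\delta\asymp(\sqrt{M}/m)\sqrt{(d\vee t)/n}$: it expands $g_n(h)-g_n(0)$ around $0$, subtracts $\tfrac12\langle g_n''(0)h,h\rangle$ and bounds the remainder by $-\tfrac{L}{2}\delta^3$, obtaining one algebraic inequality \eqref{main_bound} that it turns into a small-probability event by the right choice of $\delta$. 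You instead run a two-stage argument: first localize $\hat\theta$ into the deterministic ball $B(0,R)$ with $R=m/(4L)$ by combining Hessian concentration at the origin with the Lipschitz perturbation to get $\nabla^2 F_n\succeq(m/2)I_d$ uniformly on $B(0,R)$, then use the first-order optimality condition together with the integral mean-value form of the Hessian to deduce $\|\hat\theta\|\le(2/m)\|\nabla F_n(0)\|$. Both versions need the two constraints encoded in $\gamma$ for exactly the reasons you describe, and both deliver the same rate. Your choice of matrix Bernstein for the Hessian concentration at $0$ yields a $\sqrt{\log d}$ rather than $\sqrt{d}$ factor, which is slightly sharper than the paper's sphere-discretization argument, but you correctly note that either suffices here.
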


Several simple facts will be used in the proof.

For a differentiable function $g: {\mathbb R}^d\mapsto {\mathbb R},$ define the remainder of its first order Taylor expansion 
\begin{align*}
S_g(x;h) := g(x+h)-g(x)- \langle g'(x),h\rangle, x,h\in {\mathbb R}^d. 
\end{align*}
The next proposition is straightforward.

\begin{proposition}
\label{remainder_bd}
Let $g$ be twice differentiable.
Then, for all $x, y, h, h' \in {\mathbb R}^d,$ the following properties hold:
\begin{enumerate}[label=(\roman*)]
\item $|S_g(x;h)|\leq \frac{1}{2} \|g''\|_{L_{\infty}}\|h\|^2.$ 
\item $|S_g(x;h)- \frac{1}{2} \langle g''(x)h,h\rangle|\leq \frac{1}{6}\|g''\|_{\rm Lip}\|h\|^3.$
\item $|S_g(x;h)-S_g(x;h')|\leq \frac{1}{2}\|g''\|_{L_{\infty}} \|h-h'\|^2 +\|g''\|_{L_{\infty}} \|h\|\|h-h'\|.$
\item $|S_g(x;h)-S_g(y;h)|\leq \frac{1}{4} \|g''\|_{\rm Lip}\|h\|^2\|x-y\|.$
\end{enumerate}
If $g\in C^s$ for $s=1+\rho,$ $\rho\in (0,1],$ then
\begin{enumerate}[label=(\roman*)]
\setcounter{enumi}{4}
\item $|S_g(x;h)-S_g(x;h')|\lesssim \|g\|_{C^s}(\|h\|^{\rho}\vee \|h'\|^{\rho})\|h-h'\|.$
\end{enumerate}
\end{proposition}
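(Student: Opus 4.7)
The plan is to base the entire proof on the two standard integral forms of the Taylor remainder: the second-order form
\[
S_g(x;h)=\int_0^1(1-t)\langle g''(x+th)h,h\rangle\,dt,
\]
and the first-order form
\[
S_g(x;h)=\int_0^1\langle g'(x+th)-g'(x),h\rangle\,dt=\int_0^1\int_0^1 t\,\langle g''(x+uth)h,h\rangle\,du\,dt.
\]
Each of the five claims reduces to applying an appropriate regularity bound on $g''$ (or on $g'$, in part (v)) inside one of these integrals.

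For \textbf{(i)}, I plug $\|g''(x+th)\|\le\|g''\|_{L_\infty}$ into the second-order form and use $\int_0^1(1-t)\,dt=\tfrac12$. For \textbf{(ii)}, I write
\[
S_g(x;h)-\tfrac12\langle g''(x)h,h\rangle=\int_0^1(1-t)\langle(g''(x+th)-g''(x))h,h\rangle\,dt,
\]
bound the integrand by $\|g''\|_{\mathrm{Lip}}\,t\,\|h\|^3$, and evaluate $\int_0^1 t(1-t)\,dt=\tfrac16$.

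For \textbf{(iii)} and \textbf{(v)} I would use the identity
\[
S_g(x;h)-S_g(x;h')=\int_0^1\bigl\langle g'\bigl(x+h'+t(h-h')\bigr)-g'(x),\,h-h'\bigr\rangle\,dt.
\]
For (iii), bound $\|g'(x+h'+t(h-h'))-g'(x)\|\le\|g''\|_{L_\infty}\|h'+t(h-h')\|$ and use the triangle estimate $\|h'+t(h-h')\|=\|h+(1-t)(h'-h)\|\le\|h\|+(1-t)\|h-h'\|$; integrating against $\|h-h'\|$ and using $\int_0^1(1-t)\,dt=\tfrac12$ yields the two displayed terms. For (v), use that $g\in C^{1+\rho}$ implies $\|g'(y)-g'(x)\|\le\|g\|_{C^s}\|y-x\|^\rho$, together with the convexity bound $\|h'+t(h-h')\|=\|(1-t)h'+th\|\le\|h\|\vee\|h'\|$.

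For \textbf{(iv)}, subtracting the two second-order integral representations gives
\[
S_g(x;h)-S_g(y;h)=\int_0^1(1-t)\bigl\langle(g''(x+th)-g''(y+th))h,h\bigr\rangle\,dt,
\]
and the Lipschitz bound on $g''$ together with $\int_0^1(1-t)\,dt$ produces a constant times $\|g''\|_{\mathrm{Lip}}\|h\|^2\|x-y\|$. Nothing here is deep; the only mild subtlety is keeping track of the sharp numerical constants, which follow by direct integration of the beta-type weights $t^a(1-t)^b$ arising in each case.
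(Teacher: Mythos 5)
Your proof is correct and is exactly the standard argument; the paper omits the proof, calling the proposition ``straightforward.'' Parts (i), (ii), (iii) and (v) follow cleanly from the integral representations you give, with the identity
\[
S_g(x;h)-S_g(x;h')=\int_0^1\bigl\langle g'\bigl(x+h'+t(h-h')\bigr)-g'(x),\,h-h'\bigr\rangle\,dt
\]
and the convexity estimate $\|(1-t)h'+th\|\le\|h\|\vee\|h'\|$ doing the work for (iii) and (v).

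One point worth flagging: for (iv), the integral you set up,
\[
S_g(x;h)-S_g(y;h)=\int_0^1(1-t)\bigl\langle\bigl(g''(x+th)-g''(y+th)\bigr)h,h\bigr\rangle\,dt,
\]
combined with $\|g''(x+th)-g''(y+th)\|\le\|g''\|_{\rm Lip}\|x-y\|$ and $\int_0^1(1-t)\,dt=\tfrac12$, gives the bound $\tfrac12\|g''\|_{\rm Lip}\|h\|^2\|x-y\|$, not the $\tfrac14$ stated in the proposition. You wrote only ``a constant times,'' so your argument does not actually deliver the paper's constant. In fact $\tfrac12$ is sharp: with $d=1$ and $g(x)=x^3/6$ one has $\|g''\|_{\rm Lip}=1$ and $S_g(x;h)-S_g(y;h)=\tfrac12(x-y)h^2$, so no argument can produce $\tfrac14$. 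The paper's $\tfrac14$ appears to be a typo; since (iv) is not used downstream with a sharp constant (all applications are up to $\lesssim$), this does not affect anything else, but your write-up should state the constant you actually prove rather than leave it implicit.
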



Let $\xi_1,\dots, \xi_n$ be i.i.d. copies of $\xi$ (that is, $\xi_j:= X_j-\theta$). 
Define the following convex functions:
\begin{align}
&
g(h):= {\mathbb E} V(\xi+h),\nonumber
\\
&
\label{def_g_functions}
g_n(h):= n^{-1}\sum_{j=1}^n V(\xi_j + h), h\in {\mathbb R}^d.
\end{align}
Note that ${\mathbb E}g_n(h)=g(h)$ and $g''(0)={\mathcal I}.$

We will need simple probabilistic bounds for r.v. 
$
g_n'(0) = n^{-1} \sum_{j=1}^n V'(\xi_j)
$
and
$
g_n''(0)=n^{-1} \sum_{j=1}^n V''(\xi_j).
$
We start with the following lemma.

\begin{lemma}
\label{bd_on_V'}
For all $u\in {\mathbb R}^d,$ $\langle V'(\xi),u\rangle$ is a subgaussian r.v. with 
\begin{align*}
\|\langle V'(\xi),u\rangle\|_{\psi_2}\lesssim \sqrt{M}\|u\|.
\end{align*}
\end{lemma}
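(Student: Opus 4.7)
The plan is to prove the subgaussian bound by a direct computation of the moment generating function of $\langle V'(\xi), u\rangle$, based on translating the normalizing integral $\int e^{-V(x)}\,dx = 1$ along direction $u$. This kind of argument is standard for scores of log-concave densities and uses only the bound $\|V''\|_{L_\infty} \leq M$; the lower bound on the Fisher information and the convexity of $V$ play no role in this particular lemma.

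Fix $u \in \mathbb{R}^d$ and $t \in \mathbb{R}$. Translation invariance of Lebesgue measure yields
\[
1 = \int_{\mathbb{R}^d} e^{-V(x+tu)}\,dx.
\]
Writing the first order Taylor expansion $V(x+tu) = V(x) + t\langle V'(x), u\rangle + S_V(x;tu)$ and invoking the remainder estimate $|S_V(x;tu)| \leq \tfrac{1}{2}\|V''\|_{L_\infty}\, t^2\|u\|^2 \leq \tfrac{M t^2}{2}\|u\|^2$ from Proposition \ref{remainder_bd}(i), I would factor the uniform quadratic bound out of the integrand to obtain
\[
1 \;\geq\; e^{-Mt^2\|u\|^2/2} \int_{\mathbb{R}^d} e^{-V(x) - t\langle V'(x), u\rangle}\,dx
\;=\; e^{-Mt^2\|u\|^2/2}\, \mathbb{E}\,e^{-t\langle V'(\xi), u\rangle}.
\]
Rearranging and using that $t \in \mathbb{R}$ was arbitrary, I arrive at the two-sided Gaussian MGF bound
\[
\mathbb{E}\,e^{t\langle V'(\xi), u\rangle} \;\leq\; e^{M t^2\|u\|^2/2}, \qquad t \in \mathbb{R},
\]
which identifies $\langle V'(\xi), u\rangle$ as subgaussian with variance proxy $M\|u\|^2$. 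The desired bound $\|\langle V'(\xi), u\rangle\|_{\psi_2} \lesssim \sqrt{M}\,\|u\|$ then follows from the standard equivalence between subgaussian MGF bounds and the $\psi_2$-norm (equivalently, from the moment characterization \eqref{psi_alpha}).

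I do not foresee any real obstacle: the only analytic input is the global bound on $\|V''\|_{L_\infty}$, which controls the quadratic Taylor remainder uniformly in $x$. As a byproduct, the identity $\mathbb{E}V'(\xi)=0$ is recovered from the vanishing of the linear-in-$t$ term in the above MGF estimate, but it is not needed as an input to the argument.
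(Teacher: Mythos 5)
Your proof is correct, and it takes a genuinely different route from the paper's. The paper proves the lemma by computing the even moments of $\langle V'(\xi),u\rangle$ via integration by parts: one differentiates $\langle V'(x),u\rangle^{2k-1}$ against $\langle V'(x),u\rangle e^{-V(x)}$, obtains the recursion
\begin{align*}
{\mathbb E}\langle V'(\xi),u\rangle^{2k} \leq (2k-1)\,M\|u\|^2\,{\mathbb E}\langle V'(\xi),u\rangle^{2(k-1)},
\end{align*}
and by induction gets the Gaussian-type bound ${\mathbb E}\langle V'(\xi),u\rangle^{2k} \leq (2k-1)!!\,M^k\|u\|^{2k}$, which is then converted into the $\psi_2$ bound via the moment characterization of subgaussian norms. (This route also needs the exponential tail bound $e^{-V(x)}\leq Ae^{-B\|x\|}$ from log-concavity to justify the integration by parts and the finiteness of all moments.) Your approach instead bounds the moment generating function directly: translation invariance of Lebesgue measure gives $1=\int e^{-V(x+tu)}\,dx$, and the uniform second-order Taylor remainder bound $|S_V(x;tu)|\leq \tfrac{M}{2}t^2\|u\|^2$ yields the pointwise inequality $e^{-V(x+tu)}\geq e^{-Mt^2\|u\|^2/2}\,e^{-V(x)-t\langle V'(x),u\rangle}$, hence ${\mathbb E}\,e^{-t\langle V'(\xi),u\rangle}\leq e^{Mt^2\|u\|^2/2}$ for all $t\in{\mathbb R}$, which is the standard subgaussian MGF bound with the same variance proxy. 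Both arguments use only $\|V''\|_{L_\infty}\leq M$; your MGF route avoids the integration-by-parts bookkeeping and the tail lemma entirely, while the paper's recursion makes the parallel with Gaussian moments (the $(2k-1)!!$ structure) explicit. Your remark that ${\mathbb E}V'(\xi)=0$ follows from the MGF bound is also correct: for each fixed $u$, the two-sided bound ${\mathbb E}\,e^{t\langle V'(\xi),u\rangle}\leq e^{Mt^2\|u\|^2/2}$ forces the linear term in $t$ to vanish.
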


\begin{proof}
For all $k\geq 1,$ we have
\begin{align*}
{\mathbb E} \langle V'(\xi),u\rangle^{2k}= 
\int_{{\mathbb R}^d} \langle V'(x),u\rangle^{2k} e^{-V(x)}dx.
\end{align*}
By Lemma 2.2.1 in \cite{BGVV14}, there are constants $A,B>0$ such that $e^{-V(x)}\leq Ae^{-B\|x\|}$ for all $x\in\mathbb{R}^d$. Moreover, by Assumption \ref{Assump_main}, $V'$ is $M$-Lipschitz which implies that $\|V'(x)\|\leq \|V'(0)\|+ M\|x\|$ for all $x\in\mathbb{R}^d$. Combining these two facts, the above integral is finite for all $k\geq 1$ and we obtain that
\begin{align*}
\int_{{\mathbb R}^d} \langle V'(x),u\rangle^{2k} e^{-V(x)}dx
&=\int_{{\mathbb R}^d} \langle V'(x),u\rangle^{2k-1} \langle V'(x),u\rangle e^{-V(x)}dx 
\\
&
=\int_{{\mathbb R}^d} (2k-1)\langle V'(x),u\rangle^{2k-2} \langle V''(x)u,u\rangle e^{-V(x)}dx, 
\end{align*}
where we used integration by parts in the last equality. 
Therefore,
\begin{align*}
{\mathbb E} \langle V'(\xi),u\rangle^{2k} &\leq (2k-1) M \|u\|^2 \int_{{\mathbb R}^d}  \langle V'(x),u\rangle^{2k-2} e^{-V(x)}dx
\\
&
= (2k-1) M \|u\|^2 {\mathbb E} \langle V'(\xi),u\rangle^{2(k-1)}.
\end{align*}
It follows by induction that 
\begin{align*}
{\mathbb E} \langle V'(\xi),u\rangle^{2k} \leq (2k-1)!! M^k \|u\|^{2k}.
\end{align*}
It is easy to conclude that, for all $p\geq 1,$
\begin{align*}
\|\langle V'(\xi),u\rangle \|_{L_p}\lesssim \sqrt{p} \sqrt{M} \|u\|, 
\end{align*}
implying the claim.

\qed
\end{proof}

An immediate consequence is the following corollary.

\begin{corollary}
\label{cor_g_n'}
For all $t\geq 1,$ with probability at least $1-e^{-t}$
\begin{align*}
\|g_n'(0)\| \lesssim \sqrt{M}\Bigl(\sqrt{\frac{d}{n}}\vee \sqrt{\frac{t}{n}}\Bigr).
\end{align*}
\end{corollary}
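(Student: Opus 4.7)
The plan is to bound $\|g_n'(0)\|$ via the standard discretization argument on the unit sphere in $\mathbb{R}^d$, combined with the scalar subgaussian bound provided by Lemma \ref{bd_on_V'}.

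First, recall that ${\mathbb E} V'(\xi)=0$, so $g_n'(0)=n^{-1}\sum_{j=1}^n V'(\xi_j)$ is a normalized sum of i.i.d.\ mean-zero random vectors. For any fixed unit vector $u\in\mathbb{R}^d$, Lemma \ref{bd_on_V'} gives $\|\langle V'(\xi_j),u\rangle\|_{\psi_2}\lesssim \sqrt{M}$, and since $\psi_2$-norms of independent mean-zero r.v.\ combine in a Hoeffding-type fashion, one obtains
\begin{align*}
\bigl\|\langle g_n'(0),u\rangle\bigr\|_{\psi_2}\lesssim \sqrt{M/n}.
\end{align*}
Equivalently, for every $s>0$,
\begin{align*}
{\mathbb P}\bigl\{|\langle g_n'(0),u\rangle|\geq s\bigr\}\leq 2\exp\bigl(-cs^2 n/M\bigr)
\end{align*}
for an absolute constant $c>0$.

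Next, I would fix a $1/2$-net $N$ of the Euclidean unit sphere $S^{d-1}$ with $|N|\leq 5^d$ (standard volumetric bound). By a union bound over $N$,
\begin{align*}
{\mathbb P}\Bigl\{\max_{u\in N}|\langle g_n'(0),u\rangle|\geq s\Bigr\}\leq 2\cdot 5^d\exp(-cs^2 n/M).
\end{align*}
Choosing $s=C\sqrt{M(d+t)/n}$ with $C$ large enough absorbs the factor $5^d$ and yields probability at most $e^{-t}$. Finally, the net approximation gives $\|g_n'(0)\|=\sup_{u\in S^{d-1}}\langle g_n'(0),u\rangle\leq 2\max_{u\in N}\langle g_n'(0),u\rangle$, so on this event
\begin{align*}
\|g_n'(0)\|\lesssim \sqrt{M(d+t)/n}\lesssim \sqrt{M}\Bigl(\sqrt{d/n}\vee\sqrt{t/n}\Bigr),
\end{align*}
which is the claimed bound.

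There is no real obstacle here: the entire argument is a textbook application of the $\varepsilon$-net / union-bound technique, and the only ingredient that required work (the subgaussianity of $\langle V'(\xi),u\rangle$ with the correct $\sqrt{M}$ scaling) has already been supplied by Lemma \ref{bd_on_V'}. The one small point to be careful about is to invoke an appropriate Hoeffding-type inequality for sums of independent (not necessarily symmetric) subgaussian scalars to get the $\sqrt{M/n}$ scaling for $\langle g_n'(0),u\rangle$; this follows directly from $\|\sum_j \eta_j\|_{\psi_2}^2\lesssim \sum_j\|\eta_j\|_{\psi_2}^2$ when the $\eta_j$ are independent and mean zero.
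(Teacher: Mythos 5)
Your proof is correct and follows essentially the same route as the paper's: a $1/2$-net of the unit sphere, the subgaussian bound for $\langle V'(\xi),u\rangle$ from Lemma \ref{bd_on_V'} combined with a Hoeffding-type inequality for sums of independent mean-zero subgaussians, and a union bound over the net. The only difference is that you make the intermediate tail-bound and mean-zero steps explicit where the paper compresses them into one line, and the paper phrases the final step as shifting $t \mapsto t + d\log 5$ rather than choosing $s$ proportional to $\sqrt{M(d+t)/n}$; these are equivalent.
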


\begin{proof}
Let $S^{d-1}$ be the unit sphere in $\mathbb{R}^d$ and let $A\subset S^{d-1}$ be a $1/2$-net with ${\rm card}(A)\leq 5^d.$ Then 
\begin{align*}
\|g_n'(0)\|= \sup_{u\in S^{d-1}} \langle g_n'(0), u\rangle \leq 2\max_{u\in A} \langle g_n'(0), u\rangle.
\end{align*}
By Lemma \ref{bd_on_V'}, we have for all $u\in A,$ 
\begin{align*}
\|\langle g_n'(0), u\rangle\|_{\psi_2} = \Bigl\|n^{-1} \sum_{j=1}^n \langle V'(\xi_j), u\rangle\Bigr\|_{\psi_2}  
\lesssim \frac{\sqrt{M}}{\sqrt{n}},
\end{align*}
implying that with probability at least $1-e^{-t}$
\begin{align*}
|\langle g_n'(0), u\rangle| \lesssim \sqrt{M}\sqrt{\frac{t}{n}}.
\end{align*}
It remains to use the union bound and to replace $t$ by $t+ d \log (5)$.

\qed
\end{proof}

\begin{proposition}
\label{bd_g_n''}
For all $t\geq 1,$ with probability at least $1-e^{-t}$
\begin{align*}
\|g_n''(0)-{\mathcal I}\| \lesssim M\Bigl(\sqrt{\frac{d}{n}}\vee \sqrt{\frac{t}{n}}\Bigr).
\end{align*}
Moreover, 
\begin{align*}
\Bigl\|\|g_n''(0)-{\mathcal I}\|\Bigr\|_{\psi_2} \lesssim M \sqrt{\frac{d}{n}}.
\end{align*}
\end{proposition}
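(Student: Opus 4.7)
The plan is to reduce both bounds to scalar concentration via an $\varepsilon$-net on the sphere. Since $g_n''(0)-\mathcal{I}=n^{-1}\sum_{j=1}^n(V''(\xi_j)-\mathcal{I})$ is a symmetric matrix, its operator norm equals $\sup_{u\in S^{d-1}}|\langle(g_n''(0)-\mathcal{I})u,u\rangle|$. For each fixed $u\in S^{d-1}$, the summands $Y_j(u):=\langle V''(\xi_j)u,u\rangle-\mathbb{E}\langle V''(\xi)u,u\rangle$ are i.i.d., centered, and bounded by $2\|V''\|_{L_\infty}\leq 2M$, so Hoeffding's inequality gives
\begin{equation*}
\bigl\|n^{-1}\textstyle\sum_{j=1}^n Y_j(u)\bigr\|_{\psi_2}\lesssim \frac{M}{\sqrt{n}}.
\end{equation*}

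Next I would take a $(1/4)$-net $A\subset S^{d-1}$ with $|A|\leq 9^d$ and use the standard symmetric-matrix approximation lemma (if $B$ is symmetric and $x\in S^{d-1}$ achieves the supremum, pick $y\in A$ with $\|x-y\|\leq 1/4$; then $|\langle Bx,x\rangle-\langle By,y\rangle|=|\langle B(x+y),x-y\rangle|\leq\tfrac12\|B\|$, yielding $\|B\|\leq 2\max_{y\in A}|\langle By,y\rangle|$). Applied to $B=g_n''(0)-\mathcal{I}$ this gives
\begin{equation*}
\|g_n''(0)-\mathcal{I}\|\leq 2\max_{u\in A}\bigl|\langle (g_n''(0)-\mathcal{I})u,u\rangle\bigr|.
\end{equation*}

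For the first claim I would apply the sub-gaussian tail bound to each $u\in A$ and take a union bound, yielding with probability at least $1-e^{-t}$,
\begin{equation*}
\|g_n''(0)-\mathcal{I}\|\lesssim M\sqrt{\frac{t+d\log 9}{n}}\lesssim M\Bigl(\sqrt{\tfrac{d}{n}}\vee\sqrt{\tfrac{t}{n}}\Bigr).
\end{equation*}
For the second claim I would use the standard maximal inequality $\|\max_{u\in A}|\xi_u|\|_{\psi_2}\lesssim \sqrt{\log(|A|+1)}\max_u\|\xi_u\|_{\psi_2}$, applied to $\xi_u:=n^{-1}\sum_j Y_j(u)$, giving
\begin{equation*}
\bigl\|\|g_n''(0)-\mathcal{I}\|\bigr\|_{\psi_2}\lesssim \sqrt{d}\cdot \frac{M}{\sqrt{n}}=M\sqrt{\frac{d}{n}}.
\end{equation*}

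There is no substantial obstacle here: the uniform boundedness of $V''$ makes every scalar projection a bounded i.i.d. average, so Hoeffding and a covering argument suffice, and there is no need to invoke full matrix Bernstein machinery. The only minor care needed is the symmetric-matrix net lemma (which produces the constant $2$ rather than the $4$ one would get using both test vectors distinct), and checking that the $\psi_2$-norm of a maximum over an exponentially large net behaves in the expected $\sqrt{d}$-way; both are entirely standard.
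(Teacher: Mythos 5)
Your proposal is correct and takes essentially the same route as the paper: bound $\|\langle V''(\xi)u,u\rangle\|_{\psi_2}\lesssim M$ via the uniform bound on $V''$, discretize the sphere, and take a union bound. The only minor difference is that for the $\psi_2$-bound on $\|g_n''(0)-\mathcal{I}\|$ the paper observes that the tail bound, together with $t,d\geq 1$, is directly equivalent to the $\psi_2$-bound, whereas you invoke the standard maximal inequality $\|\max_{u\in A}|\xi_u|\|_{\psi_2}\lesssim\sqrt{\log|A|}\max_u\|\xi_u\|_{\psi_2}$; both are routine and give the same result.
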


\begin{proof}
Similarly to the proof of Corollary \ref{cor_g_n'}, one can use the fact that $\|\langle V''(\xi)u, v\rangle\|_{\psi_2}\lesssim \|\langle V''(\xi)u, v\rangle\|_{L_{\infty}}\lesssim M, u, v\in S^{d-1}$ and discretization of the unit sphere to prove the first bound.

Moreover, using that $t,d\geq 1$, the first bound implies that
\begin{align*} 
{\mathbb P}\biggl\{\|g_n''(0)-{\mathcal I}\| \geq  C_1 M\sqrt{\frac{d}{n}}\sqrt{t}\biggr\}
\leq e^{-t}, t\geq 1,
\end{align*}
which is equivalent to the second bound.

\qed
\end{proof}

We now turn to the proof of Theorem \ref{th_bound_on_hat_theta}.

\begin{proof}
Note that the minimum of convex function $g$ from \eqref{def_g_functions} is attained at $0$ and also
\begin{align}\label{def_hat_h}
\hat h:= \operatorname{argmin}_{h\in {\mathbb R}^d}\limits g_n(h)= \theta-\hat \theta,
\end{align}
so, to prove Theorem \ref{th_bound_on_hat_theta}, it will be enough to bound $\|\hat h\|.$
We will use the following elementary lemma.

\begin{lemma}
\label{elementary_convex}
Let $q: {\mathbb R}^d\mapsto {\mathbb R}$ be a convex function attaining its minimum at $\bar x\in {\mathbb R}^d.$ For all $x_0\in {\mathbb R}^d$
and $\delta>0,$ the condition $\|\bar x-x_0\|\geq \delta$ implies that 
$
\inf_{\|x-x_0\|=\delta} q(x)-q(x_0) \leq 0.
$
\end{lemma}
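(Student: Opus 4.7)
The plan is to produce an explicit point on the sphere $\{x : \|x - x_0\| = \delta\}$ at which $q$ is no larger than $q(x_0)$, by taking the appropriate convex combination of $x_0$ and $\bar x$. Concretely, under the hypothesis $\|\bar x - x_0\| \geq \delta > 0$, I would set $t := \delta/\|\bar x - x_0\| \in (0,1]$ and define the candidate point $x_\delta := (1-t)x_0 + t\bar x = x_0 + t(\bar x - x_0)$. Then $\|x_\delta - x_0\| = t\|\bar x - x_0\| = \delta$ by construction, so $x_\delta$ lies in the set over which we take the infimum.

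Applying convexity of $q$ together with the fact that $\bar x$ is a global minimizer (so $q(\bar x) \leq q(x_0)$), I would conclude
\begin{equation*}
q(x_\delta) \leq (1-t)\,q(x_0) + t\,q(\bar x) \leq (1-t)\,q(x_0) + t\,q(x_0) = q(x_0),
\end{equation*}
which immediately gives $\inf_{\|x-x_0\|=\delta} q(x) - q(x_0) \leq q(x_\delta) - q(x_0) \leq 0$, as desired.

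There is no genuine obstacle here; the statement is a textbook one-line convex-combination argument. The only subtlety is ensuring $t \in (0,1]$, which is precisely why the hypothesis is stated as $\|\bar x - x_0\| \geq \delta$ with $\delta > 0$. The lemma is in fact being set up for use in the proof of Theorem~\ref{th_bound_on_hat_theta}, applied with $q = g_n$, $x_0 = 0$ and $\bar x = \hat h$ (recall $\hat h$ was defined in \eqref{def_hat_h} as the minimizer of $g_n$): contrapositively, showing $\inf_{\|h\|=\delta}(g_n(h) - g_n(0)) > 0$ with high probability will force $\|\hat h\| < \delta$, which is how one will convert the quadratic lower bound coming from Assumption~\ref{Assump_main}(ii) together with the linear perturbation controlled by Corollary~\ref{cor_g_n'} into the desired bound on $\|\hat\theta - \theta\|$.
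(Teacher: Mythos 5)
Your proof is correct and is essentially identical to the paper's: you take the same convex combination $x_\delta = x_0 + t(\bar x - x_0)$ with $t = \delta/\|\bar x - x_0\|$ (the paper calls it $\lambda$) and apply convexity plus $q(\bar x)\leq q(x_0)$ in the same way. The extra remarks about how the lemma is used in Theorem~\ref{th_bound_on_hat_theta} are accurate but not part of the proof itself.
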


\begin{proof}
Indeed, assume that $\|\bar x-x_0\|\geq \delta.$ Clearly, $q(\bar x)-q(x_0)\leq 0.$
Let 
$x^{\ast}= \lambda \bar x+ (1-\lambda) x_0$ with $\lambda :=\frac{\delta}{\|\bar x-x_0\|}.$
Then, $\|x^{\ast}-x_0\|= \delta$ and, 
by convexity of $q,$ $q(x^{\ast})\leq \lambda q(\bar x) +(1-\lambda)q(x_0),$
implying that $q(x^{\ast})-q(x_0)\leq\lambda (q(\bar x)-q(x_0))\leq 0.$

\qed
\end{proof}

If $\|\hat h\|\geq \delta,$ then, by Lemma \ref{elementary_convex}, 
\begin{align}\label{eq_application_elementay_convex}
\inf_{\|h\|=\delta} g_n(h)-g_n(0) \leq 0.
\end{align}
Note that 
\begin{align}
g_n(h)-g_n(0) &= \langle g_n'(0),h \rangle + S_{g_n}(0;h)\nonumber
\\
&
\label{eq_bound_hat_theta_eins}
=\langle g_n'(0),h\rangle + S_{g_n}(0;h)-\frac{1}{2}\langle g_n''(0)h,h\rangle+\frac{1}{2}\langle g_n''(0)h,h\rangle.
\end{align}
For $\|h\|=\delta,$ we have, by Assumption \ref{Assump_main}, (ii),
\begin{align*}
    \langle g_n''(0)h,h\rangle=\langle g_n''(0)h,h\rangle-\langle {\mathcal I} h,h\rangle+\langle {\mathcal I} h,h\rangle\geq m\delta^2-\delta^2\|g_n''(0)-\mathcal{I}\|
\end{align*}
and, by Proposition \ref{remainder_bd}, (ii),
\begin{align*}
S_{g_n}(0;h)- \frac{1}{2} \langle g_n''(0)h,h\rangle \geq -\frac{L}{2} \delta^3.
\end{align*}
Inserting these inequalities into \eqref{eq_bound_hat_theta_eins} and using \eqref{eq_application_elementay_convex} we can conclude that if $\|\hat h\|\geq \delta$, then
\begin{align}
\label{main_bound}
  \|g_n'(0)\| \delta + \frac{\delta^2}{2}\|g_n''(0)-\mathcal{I}\|\geq \frac{m}{2}\delta^2 - \frac{L}{2} \delta^3.
\end{align}
To complete the proof, assume that the bound of Corollary \ref{cor_g_n'} holds with constant $C_1\geq 1$ and the bound of Proposition \ref{bd_g_n''} holds with constant $C_2\geq 1.$  
If constant $c$ in the definition of $\gamma$ is small enough, then 
the condition $d\vee t\leq \gamma n$ implies that 
\begin{align*}
C\Bigl(\sqrt{\frac{d}{n}}\vee \sqrt{\frac{t}{n}}\Bigr)\leq \frac{m}{M}\wedge \frac{m^2}{L \sqrt{M}}
\end{align*}
with $C:=(16C_1) \vee (4C_2).$ Moreover, let 
\begin{align*}
\delta:= 4C_1\frac{\sqrt{M}}{m} \Bigl(\sqrt{\frac{d}{n}}\vee \sqrt{\frac{t}{n}}\Bigr)
\end{align*}
Then, $\delta\leq \frac{m}{4L}$ and, on the event 
\begin{align*}
E:=\Bigl\{\|g_n''(0)-\mathcal{I}\| \leq C_2 M \Bigl(\sqrt{\frac{d}{n}}\vee \sqrt{\frac{t}{n}}\Bigr)\Bigr\},
\end{align*}
bound \eqref{main_bound} implies that
$
\delta \leq \frac{4}{m} \|g_n'(0)\|.
$
Note also that, by Proposition~\ref{bd_g_n''}, ${\mathbb P}(E^c)\leq e^{-t}$. By Corollary \ref{cor_g_n'}, the event $\{\delta \leq \frac{4}{m} \|g_n'(0)\|\}$ occurs with probability at most $e^{-t}.$
Recall that bound \eqref{main_bound} follows from $\|\hat \theta-\theta\|=\|\hat h\|\geq \delta.$
Thus, with probability at least $1-2e^{-t},$ $\|\hat \theta-\theta\|\leq \delta.$ 
It remains to adjust the constants in order to replace the probability bound $1-2e^{-t}$ with $1-e^{-t}.$

\qed
\end{proof}

The following fact will be also useful.

\begin{corollary}
\label{bd_on_hat_theta_wedge_cor}
Suppose Assumption \ref{Assump_main} holds and that $d\leq \gamma n,$
where $\gamma = c\big(\frac{m}{M}\wedge \frac{m^2}{L\sqrt{M}}\big)^2$ with a small enough constant $c>0.$
Then
\begin{align*}
\Bigl\|\|\hat \theta-\theta\| \wedge \frac{m}{12L}\Bigr\|_{\psi_2}
\lesssim  \frac{\sqrt{M}}{m} \sqrt{\frac{d}{n}}+\frac{m}{L\sqrt{\gamma}}\frac{1}{\sqrt{n}}.
\end{align*}
\end{corollary}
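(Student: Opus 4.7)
My plan is to establish a sub-Gaussian tail bound
$\P\{Y\geq s\}\leq 2\exp(-c_{\ast} s^{2}/K^{2})$
for $Y:=\|\hat\theta-\theta\|\wedge \frac{m}{12L}$ with $K$ matching the claimed rate, and then invoke the standard equivalence between such a tail estimate and the $\psi_{2}$-norm defined via moments in \eqref{psi_alpha}. There are three ranges of $s$ to handle, combining the concentration provided by Theorem~\ref{th_bound_on_hat_theta} (only for $t\in[1,\gamma n]$) with the deterministic cap $Y\leq \frac{m}{12L}$ that takes over past the reach of the theorem.

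Writing $a:=C_{0}\frac{\sqrt{M}}{m}\sqrt{d/n}$ and $b:=C_{0}\frac{\sqrt{M}}{m\sqrt{n}}$ for the absolute constant $C_{0}$ of Theorem~\ref{th_bound_on_hat_theta}, the theorem applied with a free parameter $t\in[1,\gamma n]$, after inverting $s=a+b\sqrt{t}$, yields
\begin{align*}
\P\{\|\hat\theta-\theta\|\geq s\}\leq \exp\bigl(-((s-a)/b)^{2}\bigr),\qquad s\in\bigl[a+b,\ a+C_{0}\sqrt{M\gamma}/m\bigr].
\end{align*}
For $s\geq 2a$ this is bounded by $\exp(-s^{2}/(4b^{2}))$, while for $s<2a$ the trivial bound $\P\leq 1$ already gives sub-Gaussian behavior with parameter of order $a$. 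Since $a\geq b$ when $d\geq 1$, these two bounds combine to
$\P\{Y\geq s\}\leq 2\exp(-c_{\ast} s^{2}/K_{1}^{2})$
on $s\in[0,\,a+C_{0}\sqrt{M\gamma}/m]$ with $K_{1}\lesssim \frac{\sqrt{M}}{m}\sqrt{d/n}$.

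In the intermediate regime $s\in[a+C_{0}\sqrt{M\gamma}/m,\,\frac{m}{12L}]$, monotonicity together with Theorem~\ref{th_bound_on_hat_theta} at the endpoint $t=\gamma n$ gives
\begin{align*}
\P\{Y\geq s\}\leq \P\bigl\{\|\hat\theta-\theta\|\geq a+C_{0}\sqrt{M\gamma}/m\bigr\}\leq e^{-\gamma n},
\end{align*}
which is $\leq 2\exp(-s^{2}/K_{2}^{2})$ as soon as $K_{2}^{2}\geq s^{2}/(\gamma n)$; the worst case $s=\frac{m}{12L}$ produces $K_{2}\lesssim \frac{m}{L\sqrt{\gamma n}}$. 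For $s>\frac{m}{12L}$ the tail vanishes by the definition of $Y$. Adding $K_{1}$ and $K_{2}$ yields the global sub-Gaussian tail with $K\lesssim \frac{\sqrt{M}}{m}\sqrt{d/n}+\frac{m}{L\sqrt{\gamma n}}$, equivalent to the claimed bound on $\|Y\|_{\psi_{2}}$.

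The only nontrivial point is to verify that the three ranges actually cover $[0,\infty)$, i.e., that $a+C_{0}\sqrt{M\gamma}/m\leq \frac{m}{12L}$. From $d\leq\gamma n$ we have $a\leq C_{0}\sqrt{M\gamma}/m$, and directly from the definition of $\gamma$, $C_{0}\sqrt{M\gamma}/m\leq C_{0}\sqrt{c}\,\frac{m}{L}$. Choosing the constant $c$ in the definition of $\gamma$ small enough (e.g., $C_{0}\sqrt{c}\leq 1/24$) ensures $a+C_{0}\sqrt{M\gamma}/m\leq \frac{m}{12L}$, so the three regimes fit together. I do not expect any genuine obstacle beyond this bookkeeping, once one recognizes that the truncation level $\frac{m}{12L}$ is chosen precisely as the scale at which Theorem~\ref{th_bound_on_hat_theta} stops producing useful tail information.
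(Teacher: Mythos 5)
Your proposal is correct and follows essentially the same route as the paper: both convert Theorem~\ref{th_bound_on_hat_theta} (valid only for $t\in[1,\gamma n]$) into a sub-Gaussian tail for $Y=\|\hat\theta-\theta\|\wedge\frac{m}{12L}$ that holds for all $t$, with the truncation level $\frac{m}{12L}$ and the calibration of $\gamma$ providing the key, and then invoke the tail--$\psi_2$ equivalence. The paper packages this more compactly by replacing $C_1\frac{\sqrt M}{m}(\sqrt{d/n}\vee\sqrt{t/n})$ with $\bigl(C_1\frac{\sqrt M}{m}\sqrt{d/n}\vee\frac{m}{12L\sqrt{\gamma n}}\bigr)\sqrt t$ so both terms scale as $\sqrt t$ at once, whereas you carry out the same accounting via a three-regime split in $s$; the substance is identical.
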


\begin{proof}
First, for $d\leq \gamma n$, Theorem \ref{th_bound_on_hat_theta} can be formulated as
\begin{align*}
{\mathbb P}\biggl\{\|\hat \theta-\theta\| >  
C_1 \Bigl(\frac{\sqrt{M}}{m} \sqrt{\frac{d}{n}}\vee\frac{\sqrt{M}}{m} \sqrt{\frac{t}{n}}\Bigr)\biggr\}\leq e^{-t}, t\in [1,\gamma n].
\end{align*}
This implies that
\begin{align*}
{\mathbb P}\biggl\{\|\hat \theta-\theta\|  >  
\Bigr(C_1 \frac{\sqrt{M}}{m}\sqrt{\frac{d}{n}}\vee \frac{m}{12L\sqrt{\gamma}}\frac{1}{\sqrt{n}}\Bigl) \sqrt{t}\biggr\}\leq e^{-t}, t\in [1,\gamma n],
\end{align*}
using that $t\geq 1$ and $\sqrt{\gamma}\leq \frac{1}{12C_1}\frac{m^2}{L\sqrt{M}}$ for $c$ sufficiently small. It follows that 
\begin{align*}
{\mathbb P}\biggl\{\|\hat \theta-\theta\|\wedge \frac{m}{12L} >  
\Bigr(C_1 \frac{\sqrt{M}}{m}\sqrt{\frac{d}{n}}\vee \frac{m}{12L\sqrt{\gamma}}\frac{1}{\sqrt{n}}\Bigl) \sqrt{t}\biggr\}\leq e^{-t}, t\geq 1, 
\end{align*}
which is equivalent to the claim.

\qed
\end{proof}

\section{Concentration bounds}
\label{Sec:conc}

In this section, we prove concentration inequalities for $f(\hat \theta),$ where $f$ is a smooth function on ${\mathbb R}^d.$
Namely, we will prove the following result. 

\begin{theorem}
\label{smooth_func_conc}
Let $f\in C^s$ for some $s=1+\rho,$ $\rho\in (0,1].$
Suppose that $d\leq \gamma n,$ where $\gamma := c\bigl(\frac{m}{M}\wedge \frac{m^2}{L \sqrt{M}}\bigr)^2$ with a small enough $c>0.$
Then 
\begin{align*}
&
\sup_{\theta \in {\mathbb R}^d}\Bigl\|f(\hat \theta) - {\mathbb E}_{\theta} f(\hat \theta)- n^{-1}\sum_{j=1}^n \langle V'(\xi_j), {\mathcal I}^{-1} f'(\theta)\rangle\Bigr\|_{L_{\psi_{2/3}}({\mathbb P}_{\theta})}
\\
&
\lesssim_{M,L,m} \sqrt{c(V)} \|f\|_{C^s} \frac{1}{\sqrt{n}}\Bigl(\frac{d}{n}\Bigr)^{\rho/2}.
\end{align*}
\end{theorem}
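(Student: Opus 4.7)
By translation equivariance of the MLE, the random variable
\[
\Phi(\xi_1,\ldots,\xi_n) := f(\hat\theta) - n^{-1}\sum_{j=1}^n \langle V'(\xi_j), \mathcal{I}^{-1} f'(\theta)\rangle
\]
is a function of $\xi = (\xi_1,\ldots,\xi_n)$ alone (with $\xi_j = X_j - \theta$), and since $\mathbb{E}_\theta V'(\xi_j) = 0$, the quantity to bound equals $\Phi - \mathbb{E}_\theta\Phi$. The product log-concave distribution of $\xi$ on $(\mathbb{R}^d)^n$ has Poincar\'e constant $c(V)$ by tensorization, so the $L_p$-Poincar\'e inequality \eqref{main_conc_log_concave_A} yields
\[
\|\Phi - \mathbb{E}_\theta\Phi\|_{L_p(\mathbb{P}_\theta)} \lesssim \sqrt{c(V)}\,p\,\|L\Phi\|_{L_p(\mathbb{P}_\theta)}, \qquad p \geq 1.
\]
It will therefore suffice to show $\|L\Phi\|_{L_p} \lesssim_{L,M,m} \|f\|_{C^s}\,p^{\rho/2}(d/n)^{\rho/2}/\sqrt{n}$; this gives $\|\Phi - \mathbb{E}\Phi\|_{L_p} \lesssim \sqrt{c(V)}\|f\|_{C^s}(d/n)^{\rho/2}\, p^{1+\rho/2}/\sqrt{n}$, and since $1 + \rho/2 \leq 3/2$ for $\rho \in (0,1]$, the moment characterization \eqref{psi_alpha} identifies the right-hand side as a $\psi_{2/3}$-norm bound of the claimed order.

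To bound $L\Phi$, I would apply the implicit function theorem to the score equation $g_n'(\hat h) = n^{-1}\sum_j V'(\xi_j + \hat h) = 0$ with $\hat h = \theta - \hat\theta$ from \eqref{def_hat_h}. This yields $\partial \hat h/\partial \xi_j = -n^{-1} g_n''(\hat h)^{-1} V''(\xi_j + \hat h)$, and after the chain rule,
\[
\nabla_{\xi_j}\Phi = n^{-1}\bigl[V''(\xi_j + \hat h)\,g_n''(\hat h)^{-1}\,f'(\hat\theta) - V''(\xi_j)\,\mathcal{I}^{-1}\,f'(\theta)\bigr].
\]
Splitting this as the sum $[V''(\xi_j + \hat h) - V''(\xi_j)]g_n''(\hat h)^{-1}f'(\hat\theta) + V''(\xi_j)[g_n''(\hat h)^{-1} - \mathcal{I}^{-1}]f'(\hat\theta) + V''(\xi_j)\mathcal{I}^{-1}[f'(\hat\theta) - f'(\theta)]$ and bounding each term via the Lipschitz property of $V''$, the identity $g_n''(\hat h)^{-1} - \mathcal{I}^{-1} = \mathcal{I}^{-1}(\mathcal{I} - g_n''(\hat h))g_n''(\hat h)^{-1}$, and the H\"older bound $\|f'(\hat\theta) - f'(\theta)\| \leq \|f\|_{C^s}\|\hat h\|^\rho$, then summing $\|\nabla_{\xi_j}\Phi\|^2$ in quadrature over $j$, I obtain — on the good event $G := \{\|\hat h\| \leq m/(4L)\}\cap\{\|g_n''(0)-\mathcal{I}\| \leq m/4\}$, on which $g_n''(\hat h) \succeq (m/2) I_d$ —
\[
L\Phi \lesssim_{L,M,m} \frac{\|f\|_{C^s}}{\sqrt{n}}\bigl(\|\hat h\|^\rho + \|g_n''(0) - \mathcal{I}\|\bigr).
\]
Combining Corollary \ref{bd_on_hat_theta_wedge_cor} (sub-gaussian control of the truncated $\|\hat h\|$ at scale $\sqrt{d/n}$) and Proposition \ref{bd_g_n''} (sub-gaussian control of $\|g_n''(0)-\mathcal{I}\|$ at scale $M\sqrt{d/n}$) with the trivial $\sqrt{d/n} \leq (d/n)^{\rho/2}$ for $d \leq n$ and $\rho \leq 1$ then gives the desired $L_p$-bound on $L\Phi\,\mathbf{1}_G$.

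\textbf{Main obstacle.} The step I expect to be the most delicate is removing the indicator $\mathbf{1}_G$: off $G$, the matrix $g_n''(\hat h)$ need not be well-conditioned and the local Lipschitz constant can explode, so direct application of Poincar\'e to $\Phi$ requires care. I would address this by replacing $\Phi$ with a globally Lipschitz surrogate $\tilde\Phi$ coinciding with $\Phi$ on $G$ — for example, by smoothly truncating $\hat h$ at the level $m/(12L)$ of Corollary \ref{bd_on_hat_theta_wedge_cor} and regularizing the matrix inverse $g_n''(\hat h)^{-1}$ — applying Poincar\'e to $\tilde\Phi$, and controlling $\Phi - \tilde\Phi$ separately via the exponentially small probability $\mathbb{P}_\theta(G^c) \lesssim e^{-cn}$ provided by Theorem \ref{th_bound_on_hat_theta} and Proposition \ref{bd_g_n''}, together with crude deterministic a priori bounds. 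Carrying out this truncation while preserving the precise rate and a dimension-free constant is the main technical hurdle.
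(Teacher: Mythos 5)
Your proposal is correct and takes essentially the same route as the paper. The paper's proof is organized somewhat differently: it first constructs the globally Lipschitz truncation $\check\theta$ (Propositions \ref{Lip_basic}--\ref{Lip_check_theta}), then proves a separate concentration theorem for the linear form $\langle\check\theta-\theta,w\rangle$ (Theorem \ref{lin_func_conc}) by isolating the zero-mean term $-\mathcal I^{-1}g_n'(0)$ through the Taylor representation \eqref{represent_hat_h} and applying Poincar\'e to the residual $q_n(\hat h)\varphi\psi$ and $r(\hat h)\varphi\psi$, and finally handles the Taylor remainder $S_f(\theta;\check\theta-\theta)$ via Proposition \ref{remainder_bd}(v) and another application of Poincar\'e. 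Your version computes $\nabla_{\xi_j}\Phi$ in one pass via the implicit function theorem, so the cancellation of the linear part happens at the level of the gradient rather than through an explicit Taylor decomposition of $\hat h$; after summing in quadrature this produces exactly the same local Lipschitz bound $\lesssim_{M,L,m}\frac{\|f\|_{C^s}}{\sqrt n}(\|\hat h\|^\rho+\|g_n''(0)-\mathcal I\|)$ that the paper obtains by combining \eqref{local_lip_bd} with the $S_f$-estimate. The truncation/smoothing of $\hat\theta$ that you flag as the main obstacle is precisely what the paper does with $\check\theta$ and the cutoff functions $\varphi,\psi$, together with Proposition \ref{f_k-check_f_k}-type arguments to convert back to $\hat\theta$, so your plan there matches the paper as well.

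One small inaccuracy: your intermediate claim $\|L\Phi\|_{L_p}\lesssim p^{\rho/2}(d/n)^{\rho/2}/\sqrt n$ (and the resulting $p^{1+\rho/2}$ after Poincar\'e) is slightly too strong. The contribution of $\|\hat h\|^\rho$ to $\|L\Phi\|_{L_p}$ does scale like $p^{\rho/2}$, but the contribution of $\|g_n''(0)-\mathcal I\|$, controlled by Proposition \ref{bd_g_n''}, scales like $\sqrt p$, which dominates $p^{\rho/2}$ for $\rho<1$. So the correct intermediate bound is $\|L\Phi\|_{L_p}\lesssim \sqrt p\,(d/n)^{\rho/2}/\sqrt n$, giving $\|\Phi-\mathbb E\Phi\|_{L_p}\lesssim \sqrt{c(V)}\,p^{3/2}(d/n)^{\rho/2}/\sqrt n$ — still exactly the $\psi_{2/3}$ bound in the theorem, so the final statement is unaffected.
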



To derive concentration bounds for $f(\hat \theta),$ we need to bound local Lipschitz constants of estimator 
$\hat \theta (X_1,\dots, X_n)$ as a function of its variables. A good place to start is to show the continuity 
of this function. The following fact is, probably, well known. We give its proof for completeness.

\begin{proposition}
\label{cont_hat_theta} 
Suppose that $V$ is strictly convex. 
Then, MLE $\hat \theta (x_1,\dots, x_n)$ exists and is unique 
for all $(x_1,\dots, x_n)\in {\mathbb R}^d\times \dots \times {\mathbb R}^d$ and the function 
$$
{\mathbb R}^d\times \dots \times {\mathbb R}^d \ni (x_1,\dots, x_n)\mapsto \hat \theta (x_1,\dots, x_n)\in {\mathbb R}^d
$$
is continuous.
\end{proposition}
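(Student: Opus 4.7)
The plan has three parts: existence, uniqueness, and continuity, with the last being the main content.

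\textbf{Existence and uniqueness.} For a fixed sample $(x_1,\dots,x_n),$ set $\Phi(\theta):=\sum_{j=1}^n V(x_j-\theta).$ Since $V$ is convex (hence continuous on $\mathbb{R}^d$), so is $\Phi.$ By Lemma 2.2.1 of \cite{BGVV14} (already invoked in the introduction), there exist $A,B>0$ with $V(x)\geq -\log A + B\|x\|,$ so
\begin{align*}
\Phi(\theta)\geq -n\log A + B\sum_{j=1}^n \|x_j-\theta\|\geq -n\log A - B\sum_{j=1}^n\|x_j\| + nB\|\theta\|,
\end{align*}
which diverges to $+\infty$ as $\|\theta\|\to\infty.$ Therefore $\Phi$ attains its minimum. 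Strict convexity of $V$ makes $\Phi$ strictly convex, so the minimum is unique; this defines $\hat\theta(x_1,\dots,x_n).$

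\textbf{Continuity via a subsequence argument.} Let $(x_1^{(k)},\dots,x_n^{(k)})\to (x_1,\dots,x_n)$ in $(\mathbb{R}^d)^n$ and write $\theta_k:=\hat\theta(x_1^{(k)},\dots,x_n^{(k)}),$ $\theta^\ast:=\hat\theta(x_1,\dots,x_n),$ with $\Phi_k(\theta):=\sum_{j}V(x_j^{(k)}-\theta)$ and $\Phi$ as above. I will show $\theta_k\to\theta^\ast.$

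\emph{Step 1 (boundedness of $\theta_k$).} Because $x_j^{(k)}\to x_j,$ the quantity $\max_j\|x_j^{(k)}\|$ is uniformly bounded by some $R.$ The coercivity bound above gives $\Phi_k(\theta)\geq -n\log A - nBR + nB\|\theta\|$ uniformly in $k.$ On the other hand, since $V$ is continuous, $\Phi_k(\theta^\ast)\to \Phi(\theta^\ast),$ so the sequence $\Phi_k(\theta^\ast)$ is bounded, and by definition $\Phi_k(\theta_k)\leq \Phi_k(\theta^\ast).$ Combining these yields a uniform bound on $\|\theta_k\|.$

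\emph{Step 2 (identification of limit points).} Let $(\theta_{k_i})$ be any convergent subsequence with limit $\tilde\theta.$ The inequality $\Phi_{k_i}(\theta_{k_i})\leq \Phi_{k_i}(\theta^\ast)$ passes to the limit by continuity of $V$: the right-hand side converges to $\Phi(\theta^\ast),$ while on the left $x_j^{(k_i)}-\theta_{k_i}\to x_j-\tilde\theta$ so the left-hand side converges to $\Phi(\tilde\theta).$ Hence $\Phi(\tilde\theta)\leq \Phi(\theta^\ast),$ and by the uniqueness established above $\tilde\theta=\theta^\ast.$

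\emph{Step 3 (conclusion).} Every subsequence of the bounded sequence $(\theta_k)$ has a further convergent subsequence, and by Step 2 the limit must always be $\theta^\ast.$ Therefore the whole sequence converges to $\theta^\ast,$ proving continuity.

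\textbf{Main obstacle.} The one nonroutine ingredient is Step 1: one needs coercivity of $\Phi_k$ that is \emph{uniform} in $k,$ so that $\Phi_k(\theta_k)\leq\Phi_k(\theta^\ast)$ forces $\theta_k$ into a fixed compact set. This is where the dimension-free lower bound $V(x)\geq -\log A + B\|x\|$ from Lemma 2.2.1 of \cite{BGVV14} is crucial; without it, strict convexity alone (which allows $V$ to approach $+\infty$ arbitrarily slowly) would not be enough to prevent $\theta_k$ from escaping to infinity.
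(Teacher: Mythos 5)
Your proof is correct, but it proceeds by a genuinely different route from the paper. You use the classical ``argmin continuity'' machinery: uniform coercivity (via the exponential lower bound $e^{-V(x)}\leq Ae^{-B\|x\|}$, i.e.\ $V(x)\geq -\log A + B\|x\|$) to confine the minimizers $\theta_k$ to a fixed compact set, then Bolzano--Weierstrass plus a sub-subsequence argument and uniqueness to identify every subsequential limit with $\theta^\ast.$ The paper instead never needs to establish boundedness of the sequence $\theta_k$: it invokes a small convexity lemma (Lemma~\ref{elementary_convex} in the text) stating that if a convex function's minimizer lies outside a ball of radius $\delta$ around $x_0,$ then the infimum over the sphere $\{\|\theta - x_0\| = \delta\}$ is at most the value at the center; combined with the classical fact that pointwise convergence of finite convex functions on $\mathbb{R}^d$ is automatically uniform on compacta, the paper concludes directly that $\|\hat\theta(x^{(k)}) - \hat\theta(x)\| < \delta$ for large $k.$ The paper's argument is thus entirely local (restricted to a sphere of radius $\delta$) and sidesteps the need to chase convergent subsequences, while yours is more self-contained in that it does not rely on the uniform-convergence-of-convex-functions theorem. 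Your identification of Step~1 (uniform coercivity) as the nonroutine ingredient is accurate for your route; note that the paper avoids exactly this step by working locally, which is why it never needs the Lemma~2.2.1 bound inside the continuity argument.
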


\begin{proof}
Let $(x_1,\dots, x_n)\in {\mathbb R}^d\times \dots \times {\mathbb R}^d, (x_1^{(k)},\dots, x_n^{(k)})\in {\mathbb R}^d\times \dots \times {\mathbb R}^d, k\geq 1$ 
and $(x_1^{(k)},\dots, x_n^{(k)})\to (x_1,\dots, x_n)$ as $k\to \infty.$ Define
\begin{align*}
p(\theta):= n^{-1}\sum_{j=1}^n V(x_j-\theta),\ p_k(\theta):= n^{-1}\sum_{j=1}^n V(x_j^{(k)}-\theta), \theta\in {\mathbb R}^d, k\geq 1.
\end{align*}
By continuity of $V,$ $p_k(\theta)\to p(\theta)$ as $k\to \infty$ for all $\theta\in {\mathbb R}^d.$ Since $p_k$ and $p$ are convex,
this implies the uniform convergence on all compact subsets of ${\mathbb R}^d.$

If $\|\hat \theta (x_1^{(k)}, \dots, x_n^{(k)})-\hat \theta(x_1,\dots ,x_n)\| \geq \delta,$ then, by Lemma \ref{elementary_convex}, 
\begin{align*}
\inf_{\|\theta-\hat \theta(x_1,\dots, x_n)\|=\delta} p_k(\theta)-p_k(\hat \theta(x_1,\dots, x_n)) \leq 0
\end{align*}
By the uniform convergence of $p_k$ to $p$ on compact sets,
\begin{align*}
\inf_{\|\theta-\hat \theta(x_1,\dots, x_n)\|=\delta} p_k(\theta)-p_k(\hat \theta(x_1,\dots, x_n)) \to \inf_{\|\theta-\hat \theta(x_1,\dots, x_n)\|=\delta} p(\theta)-p(\hat \theta(x_1,\dots, x_n))
\end{align*}
as $k\to \infty.$ Due to strict convexity, the minimum $\hat \theta(x_1,\dots, x_n)$ of $p(\theta)$ exists and is unique (see the argument in the introduction), and 
\begin{align*}
\inf_{\|\theta-\hat \theta(x_1,\dots, x_n)\|=\delta} p(\theta)-p(\hat \theta(x_1,\dots, x_n))>0,
\end{align*}
implying that $\|\hat \theta (x_1^{(k)}, \dots, x_n^{(k)})-\hat \theta(x_1,\dots, x_n)\| <\delta$ for all large enough $k$ and thus $\hat \theta (x_1^{(k)}, \dots, x_n^{(k)})\to \hat \theta(x_1,\dots, x_n)$ 
as $k\to \infty.$

\qed
\end{proof}

Note that the continuity of $\hat \theta$ also follows from the implicit function theorem in the case when $V$ is twice differentiable with $V''$ being positively definite throughout $\mathbb{R}^d.$

We will now study Lipschitz continuity properties of $\hat \theta$ as a function of the data $X_1,\dots, X_n$ needed to prove concentration inequalities.

\begin{proposition}
\label{Lip_basic}
Let 
\begin{align*}
A_1:=
\Bigl\{(x_1,\dots, x_n)\in {\mathbb R}^d\times \dots \times {\mathbb R}^d: 
\|\hat \theta (x_1,\dots, x_n)-\theta\|\leq \frac{m}{12 L}
\Bigr\}
\end{align*}
and 
\begin{align*}
A_2:= \Bigl\{(x_1,\dots, x_n)\in {\mathbb R}^d\times \dots \times {\mathbb R}^d: 
\Bigl\|n^{-1}\sum_{j=1}^n V''(x_j-\theta) - \mathcal{I}\Bigr\|\leq \frac{m}{4}
\Bigr\}
\end{align*}
and let $A:=A_1\cap A_2.$
Then the function $A\ni(x_1,\dots, x_n)\mapsto \hat \theta(x_1,\dots, x_n)$ is Lipschitz with constant $\frac{4M}{m \sqrt{n}}:$
for all $(x_1,\dots, x_n), (\tilde x_1,\dots, \tilde x_n)\in A,$
\begin{align*}
\|\hat \theta(x_1,\dots, x_n)-\hat \theta(\tilde x_1,\dots, \tilde x_n)\|\leq \frac{4M}{m \sqrt{n}} \Bigl(\sum_{j=1}^n \|x_j-\tilde x_j\|^2\Bigr)^{1/2}.
\end{align*}
\end{proposition}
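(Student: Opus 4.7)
The plan is to exploit the first-order optimality condition $\frac{1}{n}\sum_{j=1}^n V'(x_j - \hat\theta(x_1,\ldots,x_n)) = 0$, which holds for all $(x_1,\ldots,x_n)$ under Assumption \ref{Assump_main} since then $\hat\theta$ is the unique minimizer of a strictly convex, smooth function (with an interior minimum, by the argument given in the introduction). Writing $\hat\theta := \hat\theta(x_1,\ldots,x_n)$ and $\tilde{\hat\theta} := \hat\theta(\tilde x_1,\ldots,\tilde x_n)$, and setting $G(\theta') := \frac{1}{n}\sum_{j=1}^n V'(x_j - \theta')$ (and $\tilde G$ analogously), both $G(\hat\theta) = 0$ and $\tilde G(\tilde{\hat\theta}) = 0$.

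The first step is to compare $G$ and $\tilde G$ at $\tilde{\hat\theta}$. Using the $M$-Lipschitz continuity of $V'$ (from $\|V''\|_{L_\infty} \leq M$) and the Cauchy--Schwarz inequality,
\begin{align*}
\|G(\tilde{\hat\theta})\| = \|G(\tilde{\hat\theta}) - \tilde G(\tilde{\hat\theta})\| \leq \frac{M}{n}\sum_{j=1}^n \|x_j - \tilde x_j\| \leq \frac{M}{\sqrt{n}}\Bigl(\sum_{j=1}^n \|x_j - \tilde x_j\|^2\Bigr)^{1/2}.
\end{align*}
The second step is to write $G(\tilde{\hat\theta}) - G(\hat\theta) = -\bar H(\tilde{\hat\theta} - \hat\theta)$ via the fundamental theorem of calculus, where
\begin{align*}
\bar H := \int_0^1 \frac{1}{n}\sum_{j=1}^n V''\bigl(x_j - \hat\theta - t(\tilde{\hat\theta} - \hat\theta)\bigr)\, dt.
\end{align*}
The heart of the argument is to bound $\bar H$ from below. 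Condition $A_2$ together with $\mathcal{I} \succeq m I_d$ (Assumption \ref{Assump_main}(ii)) yields $\frac{1}{n}\sum_{j=1}^n V''(x_j-\theta) \succeq \frac{3m}{4} I_d$, and using $\|V''\|_{\rm Lip} \leq L$, the triangle inequality, and condition $A_1$ applied to both $\hat\theta$ and $\tilde{\hat\theta}$ (so that $\|\hat\theta-\theta\|, \|\tilde{\hat\theta}-\theta\| \leq m/(12L)$),
\begin{align*}
\Bigl\|\bar H - \frac{1}{n}\sum_{j=1}^n V''(x_j-\theta)\Bigr\| \leq L\int_0^1\bigl(\|\hat\theta-\theta\| + t\|\tilde{\hat\theta}-\hat\theta\|\bigr)\,dt \leq L\Bigl(\frac{m}{12L} + \frac{m}{12L}\Bigr) = \frac{m}{6}.
\end{align*}
Hence $\bar H \succeq (\tfrac{3m}{4} - \tfrac{m}{6}) I_d = \tfrac{7m}{12} I_d \succeq \tfrac{m}{2} I_d$.

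Combining the lower bound on $\bar H$ with the upper bound on $\|G(\tilde{\hat\theta}) - G(\hat\theta)\| = \|G(\tilde{\hat\theta})\|$ gives
\begin{align*}
\frac{m}{2}\|\tilde{\hat\theta} - \hat\theta\| \leq \|\bar H(\tilde{\hat\theta}-\hat\theta)\| \leq \frac{M}{\sqrt{n}}\Bigl(\sum_{j=1}^n \|x_j - \tilde x_j\|^2\Bigr)^{1/2},
\end{align*}
which yields the Lipschitz bound with constant $\tfrac{2M}{m\sqrt{n}} \leq \tfrac{4M}{m\sqrt{n}}$. The main technical point to be careful about is the simultaneous use of $A_1$ for both tuples (so that the intermediate points $x_j - \hat\theta - t(\tilde{\hat\theta}-\hat\theta)$ remain within distance $m/(6L)$ of $x_j - \theta$), which is what allows the Lipschitz continuity of $V''$ to preserve enough of the lower bound provided by $A_2$.
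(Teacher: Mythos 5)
Your proof is correct and takes a genuinely different, more direct route than the paper's. The paper's proof of this proposition works through the Taylor-expansion representation \eqref{represent_hat_h}, $\hat h = -\mathcal{I}^{-1}g_n'(0)+\mathcal{I}^{-1}q_n(\hat h)-\mathcal{I}^{-1}r(\hat h)$, and then bounds the difference $\hat h-\tilde h$ term by term, estimating $\|g_n'(0)-\tilde g_n'(0)\|$, $\|q_n(\hat h)-\tilde q_n(\tilde h)\|$ and $\|r(\hat h)-r(\tilde h)\|$ separately and finally absorbing the $\|\hat h-\tilde h\|$-dependent terms under the conditions defining $A_1$ and $A_2$. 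You instead work directly from the first-order optimality conditions $G(\hat\theta)=0$, $\tilde G(\tilde{\hat\theta})=0$, writing $G(\tilde{\hat\theta})-G(\hat\theta)=-\bar H(\tilde{\hat\theta}-\hat\theta)$ with an averaged Hessian $\bar H$ and then showing $\bar H\succeq \frac{m}{2}I_d$ by transferring the lower bound coming from $A_2$ and $\mathcal I\succeq m I_d$ along the segment controlled by $A_1$. This is essentially the mechanism the paper uses later for Theorem \ref{main_theorem_AAA} (where $V''\succeq m I_d$ holds globally); you show that it works locally on $A$ as well. Your argument is shorter, avoids the auxiliary quantities $q_n, r$ and the representation \eqref{represent_hat_h}, and actually delivers the better constant $\tfrac{2M}{m\sqrt n}$ (one could even tighten to $\tfrac{12M}{7m\sqrt n}$ using the convexity bound $\|(1-t)(\hat\theta-\theta)+t(\tilde{\hat\theta}-\theta)\|\le m/(12L)$ in place of your triangle-inequality estimate). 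The one implicit step worth flagging is that the MLE satisfies the stationarity equation $G(\hat\theta)=0$; this is automatic since the minimizer of the smooth convex function $\theta'\mapsto n^{-1}\sum_j V(x_j-\theta')$ over all of $\mathbb R^d$ exists and is interior, consistent with the discussion in the introduction.
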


\begin{proof}
Due to equivariance, we have
\begin{align*}
    \hat \theta(x_1,\dots, x_n)-\hat \theta(\tilde x_1,\dots, \tilde x_n)=\hat \theta(\xi_1,\dots, \xi_n)-\hat \theta(\tilde \xi_1,\dots, \tilde \xi_n)
\end{align*}
with $\xi_j=x_j-\theta$ and  $\tilde \xi_j=\tilde x_j-\theta.$ Hence, if we abbreviate $\hat h=\theta-\hat \theta(\xi_1,\dots, \xi_n)$ and $\tilde h=\theta-\hat \theta(\tilde \xi_1,\dots, \tilde \xi_n)$, then we have $g_n'(\hat h)=0$ with $g_n$ from \eqref{def_g_functions} and $\tilde g_n'(\tilde h)=0$ with
\begin{align*}
\tilde g_n(h):= n^{-1}\sum_{j=1}^n V(\tilde \xi_j+h), h\in {\mathbb R}^d.
\end{align*}
Recall that $g'(0)=0$ and $g''(0)={\mathcal I}$. By the first order Taylor expansion for function $g',$
$
g'(h) =  {\mathcal I} h + r(h), 
$
where 
\begin{align*}
r(h):=\int_{0}^{1} (g''(\lambda h)-g''(0))d\lambda\ h
\end{align*}
is the remainder. Therefore,
\begin{align*}
g'(\hat h) ={\mathcal I} \hat h + r(\hat h),
\end{align*}
implying that 
\begin{align}
\label{represent_hat_h}
\nonumber
\hat h &= {\mathcal I}^{-1} (g'(\hat h)-g_n'(\hat h)) - {\mathcal I}^{-1} r(\hat h) 
\\
&
\nonumber
={\mathcal I}^{-1} (g'(0)-g_n'(0)) + {\mathcal I}^{-1} q_n(\hat h)
-{\mathcal I}^{-1} r(\hat h)
\\
&
=-{\mathcal I}^{-1} g_n'(0) + {\mathcal I}^{-1} q_n(\hat h)
-{\mathcal I}^{-1} r(\hat h),
\end{align}
where 
\begin{align*}
q_n(h):= (g_n-g)'(h)-(g_n-g)'(0)=\int_{0}^1 (g_n''-g'')(\lambda h) d\lambda\ h.
\end{align*}
Similarly, we have $\tilde h=-{\mathcal I}^{-1} \tilde g_n'(0) + {\mathcal I}^{-1} \tilde q_n(\tilde h)
-{\mathcal I}^{-1} r(\tilde h)$ with $\tilde q_n(h):= (\tilde g_n-g)'(h)-(\tilde g_n-g)'(0).$ Using these representations we now bound the difference between $\hat h$ and $\tilde h.$


First note that 
\begin{align}
\label{eins}
\nonumber
&
\|g_n'(0)-\tilde g_n'(0)\| \leq n^{-1}\sum_{j=1}^n\|V'(\xi_j)-V'(\tilde \xi_j)\|
\\
&
 \leq M n^{-1}\sum_{j=1}^n \|\xi_j-\tilde \xi_j\|
\leq \frac{M}{\sqrt{n}}\Bigl(\sum_{j=1}^n \|x_j-\tilde x_j\|^2\Bigr)^{1/2}. 
\end{align}

Also,
\begin{align*}
q_n(\hat h)-\tilde q_n(\tilde h)&= ((g_n''-g'')(0))(\hat h-\tilde h)
\\
&
+
\int_{0}^1 ((g_n''-g'')(\lambda \hat h)-(g_n''-g'')(0)) d\lambda\ (\hat h-\tilde h) 
\\
&
+ \int_{0}^1 [(g_n''-g'')(\lambda \hat h)- (g_n''-g'')(\lambda \tilde h)]d\lambda \ \tilde h
\\
&
+\int_{0}^1 (g_n''-\tilde g_n'')(\lambda \tilde h) d\lambda\ \tilde h.
\end{align*}
Since, by Assumption \ref{Assump_main}, $V''$ is Lipschitz with constant $L,$ the function $h\mapsto g''(h)= {\mathbb E} V''(\xi +h)$ satisfies 
the Lipschitz condition with the same constant $L$ and the function $h\mapsto (g_n''-g'')(h)$ is Lipschitz with constant at most $2L.$ 
In addition, 
\begin{align*}
\|(g_n''-\tilde g_n'')(\lambda \tilde h)\| &\leq n^{-1}\sum_{j=1}^n \|V''(\xi_j + \lambda \tilde h)-V''(\tilde \xi_j + \lambda \tilde h)\|
\\
&
\leq \frac{L}{n} \sum_{j=1}^n \|\xi_j-\tilde \xi_j\|\leq \frac{L}{\sqrt{n}} \biggl(\sum_{j=1}^n \|x_j-\tilde x_j\|^2\biggr)^{1/2}.
\end{align*}

Therefore, we easily get
\begin{align}
\label{zwei}
\nonumber
\|q_n(\hat h)-\tilde q_n(\tilde h)\| &\leq 
\|g_n''(0)-g''(0)\|\|\hat h-\tilde h\| + L(\|\hat h\|+\|\tilde h\|)\|\hat h-\tilde h\|
\\
&
+ \frac{L}{\sqrt{n}} \|\tilde h\| \biggl(\sum_{j=1}^n \|x_j-\tilde x_j\|^2\biggr)^{1/2}.
\end{align}

Similarly, note that 
\begin{align*}
r(\hat h)-r(\tilde h)= \int_{0}^{1} (g''(\lambda \hat h)-g''(\lambda \tilde h))d\lambda\ \hat h + \int_{0}^{1} (g''(\lambda \tilde h)-g''(0))d\lambda\ (\hat h-\tilde h)
\end{align*}
which implies the following bound:
\begin{align}
\label{drei}
\|r(\hat h)-r(\tilde h)\| 
\leq \frac{L}{2} (\|\hat h\|+\|\tilde h\|) \|\hat h-\tilde h\|.
\end{align}

It follows from \eqref{eins}, \eqref{zwei} and \eqref{drei} that 
\begin{align*}
\|\hat h-\tilde h\| &\leq 
\frac{1}{m}\biggl(\Bigl(\frac{M}{\sqrt{n}}+\frac{L}{\sqrt{n}} \|\tilde h\| \Bigr)\Bigl(\sum_{j=1}^n \|x_j-\tilde x_j\|^2\Bigr)^{1/2} 
\\
&
+ 
\|g_n''(0)-g''(0)\|\|\hat h-\tilde h\| + \frac{3}{2}L(\|\hat h\|+\|\tilde h\|)\|\hat h-\tilde h\|
\biggr).
\end{align*}
If $\|g_n''(0)-g''(0)\| \leq \frac{m}{4}$ and $\|\hat h\|\vee \|\tilde h\| \leq \frac{m}{12 L},$ we easily conclude that 
\begin{align*}
\|\hat h-\tilde h\| &\leq \frac{4M}{m \sqrt{n}} \Bigl(\sum_{j=1}^n \|x_j-\tilde x_j\|^2\Bigr)^{1/2}, 
\end{align*}
which completes the proof.

\qed
\end{proof}

Since the Lipschitz condition holds for $\hat \theta$ only on set $A,$ it will be convenient for our purposes to replace $\hat \theta$ with its ``smoothed truncated" 
version $\check \theta$ that is Lipschitz in the whole space. To this end, let $\phi : {\mathbb R}\mapsto [0,1]$ be defined as follows: $\phi(s)=1, s\leq 1,$ $\phi(s)=0, s\geq 2$
and $\phi (s) = 2-s, s\in (1,2).$ Clearly, $\phi$ is Lipschitz with constant $1.$ By Theorem \ref{th_bound_on_hat_theta}, $\|\hat \theta -\theta\| \leq \frac{m}{24L}$
with probability at least $1-e^{-\gamma n},$ where $\gamma := c\bigl(\frac{m}{M}\wedge \frac{m^2}{L \sqrt{M}}\bigr)^2$ with a small enough constant $c>0$ and it is assumed that $d\leq \gamma n.$ Similarly, it follows from Proposition \ref{bd_g_n''} that $\|g_n''(0)-g''(0)\|\leq \frac{m}{8}$ with probability at least 
$1-e^{-\beta n},$ where $\beta = c \bigl(\frac{m}{M}\bigr)^2$ for a small enough constant $c>0$ (and under the assumption that $d\leq \beta n.$  
Clearly, we can assume that $\beta\geq \gamma,$ so, both properties hold with probability at least $1-2e^{-\gamma n}$ provided that $d\leq \gamma n.$
Define 
\begin{align*}
&
\varphi (x_1,\dots, x_n) := \phi \Bigl(\frac{24L}{m}\|\hat \theta(x_1,\dots,x_n)-\theta\|\Bigr),
\\
&
\psi(x_1,\dots, x_n) := \phi \Bigl(\frac{8}{m}\|g_n''(0)(x_1-\theta,\dots, x_n-\theta)-g''(0)\|\Bigr).
\end{align*}
and let 
\begin{align*}
\check \theta :=  (1-\varphi \psi) \theta + \varphi \psi \hat \theta.
\end{align*}
Note that $\check \theta -\theta = (\hat \theta-\theta) \varphi \psi$ and $\check \theta= \hat \theta$ on the event $\{\varphi =\psi =1\}$
of probability at least $1-2e^{-\gamma n}.$ 

\begin{proposition}
\label{bd_on_bias}
If $d\leq \gamma n,$ then 
\begin{align*}
\|{\mathbb E}_{\theta} \check \theta -\theta\| \lesssim_{M,L,m} \frac{d}{n}
\qquad\text{and}\qquad
{\mathbb E}_{\theta}^{1/2} \|\check \theta -\theta\|^2 \lesssim_{M,L,m} \sqrt{\frac{d}{n}}.
\end{align*}
\end{proposition}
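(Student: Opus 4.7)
My plan is to handle the two bounds separately. For the $L_2$-type bound, the key observation is that by construction of the cutoff $\phi$ (which vanishes for arguments $\geq 2$), the product $\varphi(x_1,\dots,x_n)\cdot \|\hat\theta(x_1,\dots,x_n)-\theta\|$ is pointwise bounded by $m/(12L)$; since $\psi\in[0,1]$, this yields
\begin{align*}
\|\check\theta - \theta\| = \varphi\psi\,\|\hat\theta - \theta\| \leq \|\hat\theta - \theta\|\wedge \frac{m}{12L}.
\end{align*}
Squaring and taking $\mathbb{E}_\theta^{1/2}$, then invoking Corollary \ref{bd_on_hat_theta_wedge_cor} (using that the $L_2$-norm is controlled by the $\psi_2$-norm) and the inequality $1/\sqrt n\leq \sqrt{d/n}$ (valid since $d\geq 1$), gives the claim.

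For the bias bound, I would set $E := \{\varphi = \psi = 1\} = \{\|\hat\theta - \theta\|\leq m/(24L)\}\cap \{\|g_n''(0) - \mathcal{I}\|\leq m/8\}$. Applying Theorem \ref{th_bound_on_hat_theta} with $t = \gamma n$ together with Proposition \ref{bd_g_n''} shows $\mathbb{P}_\theta(E^c)\leq 2e^{-\gamma n}$, provided $c$ in the definition of $\gamma$ is small enough. I then split
\begin{align*}
\mathbb{E}_\theta(\check\theta - \theta) = \mathbb{E}_\theta[(\hat\theta-\theta)\mathbf{1}_E] + \mathbb{E}_\theta[(\hat\theta-\theta)\varphi\psi\,\mathbf{1}_{E^c}],
\end{align*}
and bound the $E^c$ contribution in norm by $(m/12L)\,\mathbb{P}_\theta(E^c)$, which is exponentially small in $n$ and thus negligible next to $d/n$.

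On $E$, I substitute the identity \eqref{represent_hat_h} (which, being a consequence of $g_n'(\hat h)=0$ and Taylor expansion of $g'$ around $0$, holds deterministically): $\hat\theta - \theta = \mathcal{I}^{-1} g_n'(0) - \mathcal{I}^{-1} q_n(\hat h) + \mathcal{I}^{-1} r(\hat h)$. Since $\mathbb{E}_\theta g_n'(0) = g'(0) = 0$,
\begin{align*}
\mathbb{E}_\theta[\mathcal{I}^{-1} g_n'(0)\mathbf{1}_E] = -\mathbb{E}_\theta[\mathcal{I}^{-1} g_n'(0)\mathbf{1}_{E^c}],
\end{align*}
and Cauchy--Schwarz combined with $\mathbb{E}_\theta\|g_n'(0)\|^2\leq \mathrm{tr}(\mathcal{I})/n\leq Md/n$ and the exponential tail on $E^c$ makes this term negligible. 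For the two remainder terms, I would use the pointwise bounds $\|q_n(\hat h)\|\leq (\|g_n''(0)-\mathcal{I}\| + L\|\hat h\|)\|\hat h\|$ and $\|r(\hat h)\|\leq \tfrac{L}{2}\|\hat h\|^2$ (exactly the estimates already exploited in Proposition \ref{Lip_basic}). Applying Cauchy--Schwarz together with the $\psi_2$-estimates from Proposition \ref{bd_g_n''} and Corollary \ref{bd_on_hat_theta_wedge_cor}, and noting that $\|\hat h\|\mathbf{1}_E \leq \|\hat h\|\wedge m/(12L)$, each term is bounded by a constant (depending on $M,L,m$) times $d/n$.

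The only delicate point is coordinating the truncation to $E$ with the cancellation $\mathbb{E}_\theta g_n'(0) = 0$: one must be sure that replacing $\hat\theta$ by the truncated $\check\theta$ does not spoil this cancellation up to an $O(d/n)$ error. This is precisely what the exponential concentration of $\mathbb{P}_\theta(E^c)$ guarantees. Beyond this, the argument is essentially bookkeeping of constants, and I do not anticipate any substantive obstacle.
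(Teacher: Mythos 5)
Your proposal is correct and takes essentially the same route as the paper: both decompose $\hat\theta-\theta$ via \eqref{represent_hat_h}, exploit $\mathbb{E}_\theta g_n'(0)=0$, and control the remaining terms with the concentration bounds from Proposition \ref{bd_g_n''} and Corollary \ref{bd_on_hat_theta_wedge_cor}. Your hard truncation to $E=\{\varphi=\psi=1\}$ in place of the paper's soft weight $\varphi\psi$, and your one-line second-moment bound via $\|\check\theta-\theta\|\leq\|\hat\theta-\theta\|\wedge\frac{m}{12L}$, are mild simplifications of the same argument rather than a different route.
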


\begin{proof}
By representation \eqref{represent_hat_h} and the fact that $g'(0)=0$, \begin{align}
\label{repr_check_theta'''}
\nonumber
\check \theta-\theta &= (\hat \theta-\theta) \varphi \psi=
-\hat h \varphi \psi 
\\
&
={\mathcal I}^{-1}g_n'(0) -  {\mathcal I}^{-1}g_n'(0) (1-\varphi \psi) 
- {\mathcal I}^{-1}q_n(\hat h) \varphi \psi
+ {\mathcal I}^{-1}r(\hat h) \varphi \psi.
\end{align}
Using Corollary \ref{cor_g_n'}, we get
\begin{align*}
 \|\|{\mathcal I}^{-1}g_n'(0) (1-\varphi \psi)\|\|_{L_2}
&\leq \|\|{\mathcal I}^{-1}g_n'(0)\|\|_{L_4}
\|I(\varphi \psi \neq 1)\|_{L_4} 
\\
&
\lesssim \frac{1}{m} \|\|g_n'(0)\|\|_{L_4} e^{-\gamma n/4} \lesssim \frac{\sqrt{M}}{m} \sqrt{\frac{d}{n}}e^{-\gamma n/4}.
\end{align*}
Note also that 
\begin{align*}
\|q_n(\hat h)\| =\Bigl\|\int_{0}^1 (g_n''-g'')(\lambda \hat h) d\lambda\ \hat h\Bigr\|\leq \|g_n''(0)-g''(0)\|\|\hat h\|+L\|\hat h\|^2,
\end{align*}
such that we also have 
\begin{align*}
{\mathbb E}^{1/2}\|{\mathcal I}^{-1}q_n(\hat h) \varphi \psi\|^2 &\leq \frac{1}{m} {\mathbb E}^{1/2}\|g_n''(0)-g''(0)\|^2 
\Bigl(\|\hat h\|\wedge \frac{m}{12L}\Bigr)^2
+ \frac{L}{m} {\mathbb E}^{1/2}\Bigl(\|\hat h\|\wedge \frac{m}{12L}\Bigr)^4
\\
&
\leq 
\frac{1}{m} {\mathbb E}^{1/4}\|g_n''(0)-g''(0)\|^4 {\mathbb E}^{1/4}\Bigl(\|\hat h\|\wedge \frac{m}{12L}\Bigr)^4
+ \frac{L}{m} {\mathbb E}^{1/2}\Bigl(\|\hat h\|\wedge \frac{m}{12L}\Bigr)^4.
\end{align*}
Using the second bound of Proposition \ref{bd_g_n''} and the bound of Corollary \ref{bd_on_hat_theta_wedge_cor},
we get 
\begin{align*}
{\mathbb E}^{1/2}\|{\mathcal I}^{-1}q_n(\hat h) \varphi \psi\|^2 \lesssim_{M,L,m} \frac{d}{n}.
\end{align*}
Similarly, we can show that 
\begin{align*}
{\mathbb E}^{1/2}\|{\mathcal I}^{-1}r(\hat h) \varphi \psi\|^2 \lesssim_{M,L,m} \frac{d}{n},
\end{align*}
using the fact that 
\begin{align*}
\|{\mathcal I}^{-1} r(\hat h) \varphi \psi\| \leq \frac{L}{2m} \Bigl(\|\hat h\|\wedge \frac{m}{12L}\Bigr)^2
\end{align*}
and  the bound of Corollary \ref{bd_on_hat_theta_wedge_cor}.

The above bounds and representation \eqref{repr_check_theta'''} imply that 
\begin{align*}
\|{\mathbb E}_{\theta} \check \theta -\theta\| \lesssim_{M,L,m} \frac{d}{n}.
\end{align*}
Using also Corollary \ref{cor_g_n'}, we get 
\begin{align*}
{\mathbb E}_{\theta}^{1/2} \|\check \theta -\theta\|^2 \lesssim_{M,L,m} \sqrt{\frac{d}{n}}.
\end{align*}

\qed
\end{proof}

\begin{proposition}
\label{Lip_check_theta}
The function $(x_1,\dots, x_n)\mapsto \check \theta(x_1,\dots, x_n)$ is Lipschitz with constant $\lesssim \frac{M}{m \sqrt{n}}:$
for all $(x_1,\dots, x_n), (\tilde x_1,\dots, \tilde x_n)\in {\mathbb R}^d \times \dots \times {\mathbb R}^d,$
\begin{align*}
\|\check \theta(x_1,\dots, x_n)-\check \theta(\tilde x_1,\dots, \tilde x_n)\|\lesssim \frac{M}{m \sqrt{n}} \Bigl(\sum_{j=1}^n \|x_j-\tilde x_j\|^2\Bigr)^{1/2}.
\end{align*}
\end{proposition}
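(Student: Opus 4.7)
The plan is to exploit the fact that $\check\theta(x_1,\dots,x_n) - \theta = \varphi(x_1,\dots,x_n)\psi(x_1,\dots,x_n)(\hat\theta(x_1,\dots,x_n)-\theta)$ vanishes outside the set
\[
A := \Bigl\{\|\hat\theta(x_1,\dots,x_n)-\theta\|\le \tfrac{m}{12L}\Bigr\}\cap\Bigl\{\|n^{-1}\textstyle\sum_j V''(x_j-\theta)-\mathcal I\|\le \tfrac{m}{4}\Bigr\} = A_1\cap A_2,
\]
since $\phi(s)=0$ for $s\ge 2$ forces $\varphi=0$ off $A_1$ and $\psi=0$ off $A_2$. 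Thus it suffices to bound the Lipschitz constant of the map $F:=\varphi\psi(\hat\theta-\theta)$ separately on $A$ and then to pass through the boundary.

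\emph{Step 1: Lipschitz bound on $A$.} For $x,\tilde x\in A$, write the telescoping identity
\[
F(x)-F(\tilde x)=\bigl(\varphi(x)-\varphi(\tilde x)\bigr)\psi(x)h(x)+\varphi(\tilde x)\bigl(\psi(x)-\psi(\tilde x)\bigr)h(x)+\varphi(\tilde x)\psi(\tilde x)\bigl(h(x)-h(\tilde x)\bigr),
\]
where $h(x):=\hat\theta(x)-\theta$. Since $\phi$ is $1$-Lipschitz, $|\varphi(x)-\varphi(\tilde x)|\le \tfrac{24L}{m}\|h(x)-h(\tilde x)\|$ and $|\psi(x)-\psi(\tilde x)|\le \tfrac{8L}{m\sqrt{n}}(\sum_j\|x_j-\tilde x_j\|^2)^{1/2}$ (the latter using only that $V''$ is $L$-Lipschitz, hence globally holds). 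Proposition~\ref{Lip_basic} controls $\|h(x)-h(\tilde x)\|$ by $\tfrac{4M}{m\sqrt n}(\sum_j\|x_j-\tilde x_j\|^2)^{1/2}$ on $A$, and the factor $\|h(x)\|\le \tfrac{m}{12L}$ available on $A_1$ absorbs the $L/m$ prefactors in the first two terms. Combining these estimates yields
\[
\|F(x)-F(\tilde x)\|\lesssim \frac{M}{m\sqrt{n}}\Bigl(\sum_{j=1}^n\|x_j-\tilde x_j\|^2\Bigr)^{1/2}
\]
for $x,\tilde x\in A$, using $m\le M$ to absorb the $1/\sqrt n$ term.

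\emph{Step 2: Path argument through the boundary.} For general $x,\tilde x$, the remaining case is $x\in A$, $\tilde x\notin A$ (if both are outside $A$, then $F(x)=F(\tilde x)=0$). Consider the line segment $x_\lambda:=(1-\lambda)x+\lambda\tilde x$ and let
\[
\lambda_1:=\inf\{\lambda\in[0,1]:x_\lambda\notin A\}.
\]
Since $A$ is closed and $\hat\theta(\cdot)$ as well as the map $x\mapsto n^{-1}\sum_j V''(x_j-\theta)$ are continuous (Proposition~\ref{cont_hat_theta}), $A^c$ is open along the segment, so $\lambda_1>0$ and $x_{\lambda_1}\in A$; moreover approximating $\lambda_1$ from above by a sequence in $A^c$ and passing to the limit shows that at least one of the boundary equalities $\|h(x_{\lambda_1})\|=\tfrac{m}{12L}$ or $\|\eta(x_{\lambda_1})\|=\tfrac{m}{4}$ holds, whence $\varphi(x_{\lambda_1})\psi(x_{\lambda_1})=0$ and $F(x_{\lambda_1})=0$. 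Applying Step~1 to the pair $x,x_{\lambda_1}\in A$ (the entire segment $[0,\lambda_1]$ lies in $A$, hence Proposition~\ref{Lip_basic} is applicable) gives
\[
\|F(x)\|=\|F(x)-F(x_{\lambda_1})\|\lesssim \frac{M}{m\sqrt n}\|x-x_{\lambda_1}\|\le \frac{M}{m\sqrt n}\|x-\tilde x\|,
\]
as desired, and $F(\tilde x)=0$.

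The main obstacle is the fact that $\hat\theta$ is only Lipschitz on $A$ and not globally, so one cannot directly differentiate or apply a product rule through the cutoff at points where the estimator is already far from $\theta$; the path argument of Step~2 is precisely what allows us to bypass this by using continuity to locate a boundary point where $F$ vanishes, reducing the global problem to Step~1.
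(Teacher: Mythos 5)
Your proof is correct and follows essentially the same route as the paper: the same product-rule/telescoping bound on $A$ using the boundedness of $\varphi,\psi$ and the truncation $\|h\|\le\frac{m}{12L}$ on the cut-off support, followed by the same line-segment argument to reduce the case $\tilde x\notin A$ to a boundary point where $\varphi\psi$ vanishes. In fact your boundary-hitting step is stated a bit more carefully than the paper's (the paper defines $\bar\lambda$ as a supremum that, as written, would always equal $1$; your infimum formulation and the limiting argument showing $x_{\lambda_1}\in\partial A$ with $\varphi\psi(x_{\lambda_1})=0$ is the intended and correct version), and your parenthetical that the whole segment $[0,\lambda_1]$ lies in $A$, while true, is not needed since Proposition~\ref{Lip_basic} only requires the two endpoints to lie in $A$.
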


\begin{proof}
By Proposition \ref{Lip_basic}, on the  set $A,$ $\hat \theta$ is Lipschitz with constant $\frac{4M}{m \sqrt{n}}.$
This implies that function $\varphi$ is also Lipschitz on the same set with constant $\frac{24L}{m}\frac{4M}{m \sqrt{n}}.$
Note also that 
\begin{align*}
\|g_n''(0)-\tilde g_n''(0)\| \leq \frac{L}{\sqrt{n}} \biggl(\sum_{j=1}^n \|x_j-\tilde x_j\|^2\biggr)^{1/2},
\end{align*}
implying that $\psi$ is a Lipschitz function (on the whole space) with constant $\frac{8}{m}\frac{L}{\sqrt{n}}.$
Using also the fact that $\varphi$ and $\psi$ are both bounded by $1$ and $\|\hat \theta-\theta\|\leq \frac{m}{12L}$ on the 
set $\{\varphi\neq 0\},$ it is easy to conclude that $\check \theta$ is Lipschitz on $A$ with constant 
\begin{align*} 
\lesssim \frac{4M}{m \sqrt{n}}+   \frac{m}{12L}\frac{24L}{m}\frac{4M}{m \sqrt{n}}+ \frac{m}{12L}\frac{8}{m}\frac{L}{\sqrt{n}} \lesssim \frac{M}{m\sqrt{n}}.
\end{align*}

It remains to consider the case when $\varphi (x_1,\dots, x_n)\in A$ and $\varphi (\tilde x_1,\dots, \tilde x_n)\in A^c$ 
(the case when both points are in $A^c$ is 
trivial). In this case, define $x_j^{\lambda} =\lambda x_j+ (1-\lambda)\tilde x_j, \lambda \in [0,1], j=1,\dots, n.$
Note that $A$ is a closed set (by continuity of both $\hat \theta(x_1,\dots, x_n)$ and $n^{-1}\sum_{j=1}^n V''(x_j-\theta)$). 
If $\bar \lambda$ denotes the supremum of those $\lambda$ for which $(x_1^{\lambda}, \dots, x_n^{\lambda})\in A,$
then $(x_1^{\bar \lambda}, \dots, x_n^{\bar \lambda})\in \partial A,$ $(\varphi \psi) (x_1^{\bar \lambda}, \dots, x_n^{\bar \lambda})=0$ 
and $\check \theta (x_1^{\bar \lambda}, \dots, x_n^{\bar \lambda})=0= \check \theta(\tilde x_1,\dots, \tilde x_1).$
Therefore,
\begin{align*}
&
\|\check \theta(x_1,\dots, x_n)- \check \theta(\tilde x_1, \dots, \tilde x_n)\| = \|\check \theta(x_1,\dots, x_n)- \check \theta(x_1^{\bar \lambda}, \dots, x_n^{\bar \lambda})\|
\\
&
\lesssim \frac{M}{m \sqrt{n}} \biggl(\sum_{j=1}^n \|x_j-x_j^{\bar \lambda}\|^2\biggr)^{1/2} \lesssim \frac{M}{m\sqrt{n}} \biggl(\sum_{j=1}^n \|x_j-\tilde x_j\|^2\biggr)^{1/2},
\end{align*}
where we use the fact that point $(x_1^{\bar \lambda}, \dots, x_n^{\bar \lambda})$ is in the line segment between $(x_1,\dots, x_n)$ and $(\tilde x_1,\dots, \tilde x_n).$

The Lipschitz condition for $\check \theta (x_1,\dots, x_n)$ now follows. 

\qed

\end{proof}

We will now consider concentration properties of linear forms $\langle \check \theta-\theta, w\rangle, w\in {\mathbb R}^d.$
The following result will be proved.

\begin{theorem}
\label{lin_func_conc}
Suppose $d\leq \gamma n,$ where $\gamma := c\bigl(\frac{m}{M}\wedge \frac{m^2}{L \sqrt{M}}\bigr)^2$ with a small enough $c>0$.
Then 
\begin{align*}
&
\sup_{\theta\in {\mathbb R}^d}\Bigl\|\langle \check \theta - \theta, w\rangle -{\mathbb E}\langle \check \theta - \theta, w\rangle - 
n^{-1}\sum_{j=1}^n \langle V'(\xi_j),{\mathcal I}^{-1} w\rangle \Bigr\|_{L_{\psi_{2/3}}({\mathbb P}_{\theta})}
\\
&
\lesssim  \sqrt{c(V)} \Bigl(\frac{M^2}{m^2} 
 + \frac{M^{3/2} L}{m^3}\Bigr) \frac{1}{\sqrt{n}}\sqrt{\frac{d}{n}}\|w\|.
\end{align*}
\end{theorem}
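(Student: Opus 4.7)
The plan is to apply the $L_p$-Poincar\'e inequality of Remark~\ref{KLS} to the centered version of
\[
F(\xi_1,\dots,\xi_n):=\langle\check\theta-\theta,w\rangle - n^{-1}\sum_{j=1}^n\langle V'(\xi_j),\mathcal I^{-1}w\rangle,
\]
viewed as a function of $\xi=(\xi_1,\dots,\xi_n)\in(\mathbb R^d)^n$. By translation equivariance of $\hat\theta$ and $\check\theta$ we may assume $\theta=0$, and since $\mathbb EV'(\xi)=0$, the quantity in the statement equals $F-\mathbb EF$. The product law $P^{\otimes n}$ is log-concave with Poincar\'e constant $c(V)$ (by the tensorization property cited before Remark~\ref{KLS}), so \eqref{main_conc_log_concave_A} gives
\[
\|F-\mathbb EF\|_{L_p(\mathbb P_\theta)}\lesssim\sqrt{c(V)}\,p\,\|(LF)\|_{L_p(\mathbb P_\theta)}.
\]
Establishing $\|(LF)\|_{L_p}\lesssim\sqrt{p}\cdot\tfrac{\|w\|}{\sqrt n}\sqrt{d/n}\bigl(\tfrac{M^2}{m^2}+\tfrac{M^{3/2}L}{m^3}\bigr)$ then yields an $L_p$-bound of order $p^{3/2}$, which translates into the $\psi_{2/3}$-bound via \eqref{psi_alpha}.

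The heart of the argument is a favourable pointwise bound on $(LF)$ on the good event
\[
A^*:=\{\|\hat h\|<m/(24L)\}\cap\{\|g_n''(0)-\mathcal I\|<m/8\},
\]
on which both cutoffs $\varphi,\psi$ in the definition of $\check\theta$ are identically $1$ in an open neighbourhood, so $\check\theta=\hat\theta$ locally. Implicit differentiation of $\sum_jV'(\xi_j-\hat\theta)=0$ (valid because $M_n:=n^{-1}\sum_jV''(\xi_j+\hat h)$ satisfies $\|M_n-\mathcal I\|\le\|g_n''(0)-\mathcal I\|+L\|\hat h\|<m/6$ on $A^*$, hence $M_n\succeq(5m/6)I$) yields
\[
\partial_{\xi_k}\hat\theta=\tfrac{1}{n}M_n^{-1}V''(\xi_k+\hat h).
\]
Since the gradient of the centering term in $\xi_k$ equals $\tfrac{1}{n}V''(\xi_k)\mathcal I^{-1}w$, the leading terms cancel and
\[
\nabla_{\xi_k}F=\tfrac{1}{n}[V''(\xi_k+\hat h)-V''(\xi_k)]M_n^{-1}w+\tfrac{1}{n}V''(\xi_k)[M_n^{-1}-\mathcal I^{-1}]w.
\]
Using $\|V''(\xi_k+\hat h)-V''(\xi_k)\|\le L\|\hat h\|$, $\|V''(\xi_k)\|\le M$, $\|M_n^{-1}\|\lesssim 1/m$, and $\|M_n^{-1}-\mathcal I^{-1}\|\lesssim\|M_n-\mathcal I\|/m^2$, summing the squares over $k=1,\dots,n$ produces
\[
(LF)(\xi)\lesssim\tfrac{\|w\|}{\sqrt n}\Big[\tfrac{L\|\hat h\|}{m}+\tfrac{M}{m^2}\bigl(\|g_n''(0)-\mathcal I\|+L\|\hat h\|\bigr)\Big]\quad\text{on }A^*.
\]
Off $A^*$, Proposition~\ref{Lip_check_theta} together with the trivial bound $\|V''(\xi_k)\mathcal I^{-1}w\|\le(M/m)\|w\|$ gives the crude global estimate $(LF)(\xi)\lesssim M\|w\|/(m\sqrt n)$.

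To pass to $L_p$, Corollary~\ref{bd_on_hat_theta_wedge_cor} gives $\|\|\hat h\|\wedge m/(12L)\|_{L_p}\lesssim_{M,L,m}\sqrt p\,\sqrt{d/n}$ (for $d\ge 1$, absorbing the $O(1/\sqrt n)$ subdominant piece), while the second bound of Proposition~\ref{bd_g_n''} yields $\|\|g_n''(0)-\mathcal I\|\|_{L_p}\lesssim\sqrt p\,M\sqrt{d/n}$. Since $\|\hat h\|\mathbf 1_{A^*}\le\|\hat h\|\wedge m/(12L)$, combining the three contributions with the coefficients above reproduces the promised factors $\tfrac{M^2}{m^2}+\tfrac{M^{3/2}L}{m^3}$ (the cross term $\tfrac{LM^{1/2}}{m^2}$ is dominated using $M/m\ge 1$). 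The off-event contribution is at most $(M\|w\|/(m\sqrt n))\mathbb P(A^{*c})^{1/p}\lesssim(M\|w\|/(m\sqrt n))e^{-c'n/p}$ with $c'\asymp\gamma$, which under $d\le\gamma n$ is absorbed into the main term for every $p\ge 1$ after adjusting constants. Plugging the resulting $\sqrt p$-bound on $\|(LF)\|_{L_p}$ into the Poincar\'e inequality and taking $\sup_p p^{-3/2}\|\cdot\|_{L_p}$ via \eqref{psi_alpha} completes the proof.

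The main technical obstacle is justifying $\nabla\check\theta=\nabla\hat\theta$ almost everywhere on $A^*$: the smoothed estimator $\check\theta$ is built from $\hat\theta$ through the Lipschitz cutoffs $\varphi,\psi$, and one must verify that $\varphi\psi\equiv 1$ on the (open) strict interior of $A^*$ so that the Rademacher derivative of $\check\theta$ coincides with the implicit-function derivative of $\hat\theta$ on a set of full measure. A secondary subtlety is uniformly absorbing the off-event crude Lipschitz bound into the $\sqrt p$-growth of $\|(LF)\|_{L_p}$, which is achieved through the alignment $c'\asymp\gamma$ between the exponential decay rate of $\mathbb P(A^{*c})$ and the dimension constraint $d\le\gamma n$.
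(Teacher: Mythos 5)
Your proposal is correct and matches the paper's overall strategy (Poincar\'e inequality applied to a local Lipschitz bound on the error), but it reaches the local Lipschitz bound by a genuinely different route. The paper works from the algebraic identity $\hat h = -\mathcal I^{-1}g_n'(0) + \mathcal I^{-1}q_n(\hat h) - \mathcal I^{-1}r(\hat h)$ (the representation \eqref{represent_hat_h}), splits $\langle\check\theta-\theta,w\rangle$ into a main part plus $\langle g_n'(0),u\rangle(1-\varphi\psi)$, and bounds the latter separately using a $\psi_2$--$\psi_2\to\psi_1$ H\"older argument, relying on the inequality $c(V)M\geq 1$ to absorb the resulting factor. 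You instead differentiate the first-order condition $\sum_j V'(\xi_j-\hat\theta)=0$ implicitly, obtaining the exact gradient $\partial_{\xi_k}\hat\theta=\tfrac{1}{n}M_n^{-1}V''(\xi_k+\hat h)$ on the open set $A^*$ where $M_n$ is uniformly invertible and $\check\theta\equiv\hat\theta$. The cancellation with the gradient of the linearization term is then displayed explicitly, and the boundary effect that the paper handles via $(1-\varphi\psi)$ is handled in your argument entirely inside the local-Lipschitz framework: on $(A^*)^c$ you invoke the crude global Lipschitz bound of Proposition~\ref{Lip_check_theta}, and the $e^{-\gamma n/p}$ tail is absorbed because $\sup_{t\geq 1}\sqrt t\,e^{-\gamma t}\asymp \gamma^{-1/2}\asymp \tfrac{M}{m}\vee\tfrac{L\sqrt M}{m^2}$, which is exactly compensated by the $\tfrac{M^2}{m^2}+\tfrac{M^{3/2}L}{m^3}$ factor in the target. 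This is slightly cleaner than the paper's treatment in that you need neither the separate $\psi$-norm estimate nor the inequality $c(V)M\geq 1$. I verified that the implicit-function step is justified on $A^*$ (the strict inequalities make $A^*$ open, $M_n\succeq(5m/6)I_d$ there, and $V\in C^2$), that the explicit gradient decomposition is correct, and that the constant tracking (including the subdominant $\tfrac{m}{L\sqrt\gamma}\tfrac{1}{\sqrt n}$ piece from Corollary~\ref{bd_on_hat_theta_wedge_cor}) reproduces the stated bound with absolute constants. The one place your write-up could be firmer is in stating that the implicit-function derivative equals the Rademacher a.e.\ derivative of the globally Lipschitz $\check\theta$ on $A^*$ and not merely $\hat\theta$ -- but as you yourself note, this follows because $\check\theta=\hat\theta$ on the open set $A^*$, so the argument is complete.
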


\begin{remark}
\normalfont
Some concentration bounds for linear forms of MLE could be found in \cite{Miao}.
\end{remark}

\begin{proof}
Using representation \eqref{represent_hat_h} and the fact that $g'(0)=0$, we get 
\begin{align}
\label{repr_lin_form_check_theta}
\nonumber
&\langle \check \theta-\theta, w\rangle = \langle \hat \theta-\theta, w\rangle \varphi \psi=
-\langle \hat h, w\rangle \varphi \psi 
\\
&
=\langle g_n'(0),u \rangle -  \langle g_n'(0),u \rangle (1-\varphi \psi) 
- \langle q_n(\hat h), u\rangle \varphi \psi
+ \langle r(\hat h), u\rangle \varphi \psi,
\end{align}
where $u={\mathcal I}^{-1} w.$
Since 
\begin{align*}
    \langle g_n'(0),u \rangle=n^{-1}\sum_{j=1}^n \langle V'(\xi_j),{\mathcal I}^{-1} w\rangle
\end{align*}
has zero mean, it is enough to study the concentration of three other terms in the right hand side of \eqref{repr_lin_form_check_theta}.
The first of these terms is $\langle g_n'(0),u \rangle (1-\varphi \psi)$ and we have 
\begin{align*}
&
\|\langle g_n'(0),u \rangle (1-\varphi \psi)\|_{\psi_1} 
\leq \|\langle g_n'(0),u \rangle\|_{\psi_2}  \|1-\varphi \psi\|_{\psi_2}
\\
&
\leq \|\langle g_n'(0),u \rangle\|_{\psi_2}\|I(\varphi\psi\neq 1)\|_{\psi_2} \lesssim \frac{\sqrt{M}}{\sqrt{n}}\frac{1}{\sqrt{\gamma n}}\|u\|
\lesssim \sqrt{\frac{M}{\gamma}}\frac{1}{n} \|u\|,  
\end{align*}
where we used Lemma \ref{bd_on_V'} and the fact that ${\mathbb P}\{\varphi\psi\neq 1\}\leq 2e^{-\gamma n}$ with $\gamma$ from the statement of Theorem \ref{lin_func_conc}. Clearly, we also have 
\begin{align}
\label{odin}
&
\|\langle g_n'(0),u \rangle (1-\varphi \psi)- {\mathbb E}\langle g_n'(0),u \rangle (1-\varphi \psi)\|_{\psi_1} 
\lesssim \sqrt{\frac{M}{\gamma}}\frac{1}{n}\|u\|.
\end{align}

For two other terms in the right hand side of \eqref{repr_lin_form_check_theta}, we will provide bounds on their local Lipschitz
constants. 
It follows from bound \eqref{zwei} and the bound of Proposition \ref{Lip_basic} that, for all $(x_1,\dots, x_n), (\tilde x_1,\dots, \tilde x_n)\in A,$ 
\begin{align*}
&
\|q_n(\hat h)-\tilde q_n(\tilde h)\| 
\\
&
\lesssim 
\biggl(\frac{M }{m\sqrt{n}}\|g_n''(0)-g''(0)\|+ \frac{M L}{m\sqrt{n}}(\|\hat h\|+\|\tilde h\|)\biggr)\biggl(\sum_{j=1}^n \|x_j-\tilde x_j\|^2\biggr)^{1/2}. 
\end{align*}
Recall that function $\varphi$ is Lipschitz on $A$ with constant  $\frac{24L}{m}\frac{4M}{m \sqrt{n}}$ and function $\psi$ is Lipschitz on the whole 
space with constant $\frac{8}{m}\frac{L}{\sqrt{n}}.$ 
Note also that 
\begin{align*}
\|q_n(\hat h)\| =\Bigl\|\int_{0}^1 (g_n''-g'')(\lambda \hat h) d\lambda\ \hat h\Bigr\|\leq \|g_n''(0)-g''(0)\|\|\hat h\|+L\|\hat h\|^2.
\end{align*}
Since, on set $A,$ $\|\hat h\|\leq \frac{m}{12 L},$ we get 
\begin{align*}
\|q_n(\hat h)\| \leq \frac{m}{12L}\|g_n''(0)-g''(0)\|+\frac{m}{12}\|\hat h\|.
\end{align*}
Denoting $\varphi := \varphi (x_1,\dots, x_n), \tilde \varphi := \varphi (\tilde x_1,\dots, \tilde x_n), \psi:=\psi(x_1,\dots, x_n), \tilde \psi:=\psi(\tilde x_1,\dots, \tilde x_n),$ it easily follows from the facts mentioned above that, for all $(x_1,\dots, x_n), (\tilde x_1,\dots, \tilde x_n)\in A,$  
\begin{align*}
&\|q_n(\hat h)\varphi \psi - \tilde q_n(\tilde h) \tilde \varphi \tilde \psi\|
\\
&
\lesssim
\biggl(\frac{M }{m\sqrt{n}}\|g_n''(0)-g''(0)\|+ \frac{M L}{m\sqrt{n}}(\|\hat h\|+\|\tilde h\|)\biggr)\biggl(\sum_{j=1}^n \|x_j-\tilde x_j\|^2\biggr)^{1/2}. 
\end{align*}
This implies the following bound on the local Lipschitz constant of $q_n(\hat h)\varphi \psi$ on set $A:$
\begin{align}
\label{bd_Lip_const_q_n_varphi_psi}
\nonumber
&
L(q_n(\hat h)\varphi \psi)(x_1,\dots, x_n)
\\
&
\lesssim
\biggl(\frac{M }{m\sqrt{n}}\Bigl(\|g_n''(0)-g''(0)\|\wedge \frac{m}{4}\Bigr)+ \frac{M L}{m\sqrt{n}}\Bigl(\|\hat h\|\wedge \frac{m}{12L}\Bigr)\biggr).
\end{align}
The same bound trivially holds on the open set $A^c$ (where $q_n(\hat h)\varphi \psi)(x_1,\dots, x_n)=0$)
and, by the argument already used at the end of the proof of Proposition \ref{Lip_check_theta}, it is easy to conclude 
that bound \eqref{bd_Lip_const_q_n_varphi_psi} holds on the whole space. 

Using bound \eqref{drei} and the bound of Proposition \ref{Lip_basic}, we get, for all $(x_1,\dots, x_n), (\tilde x_1,\dots, \tilde x_n)\in A,$
\begin{align*}
\|r(\hat h)-r(\tilde h)\| \leq \frac{2ML}{m \sqrt{n}}(\|\hat h\|+\|\tilde h\|)\biggl(\sum_{j=1}^n \|x_j-\tilde x_j\|^2\biggr)^{1/2}
\end{align*}
and we also have 
$
\|r(\hat h)\| \leq \frac{L}{2} \|\hat h\|^2.
$
As a result, we get the following condition for the function $r(\hat h) \varphi \psi$ on set $A:$
\begin{align*}
\|r(\hat h) \varphi \psi- r(\tilde h) \tilde \varphi \tilde \psi\|
\lesssim \frac{2ML}{m \sqrt{n}}(\|\hat h\|+\|\tilde h\|)\biggl(\sum_{j=1}^n \|x_j-\tilde x_j\|^2\biggr)^{1/2}.
\end{align*}
This implies a bound on the local Lipschitz constant of $r(\hat h) \varphi \psi$ on set $A$ that, by the arguments already used, could be extended 
to the bound that holds on the whole space:
\begin{align*} 
L(r(\hat h) \varphi \psi)(x_1,\dots, x_n) \lesssim \frac{ML}{m \sqrt{n}}\Bigl(\|\hat h\|\wedge \frac{m}{12L}\Bigr). 
\end{align*}

Denoting 
\begin{align*}
\zeta(x_1,\dots, x_n):=(- \langle q_n(\hat h), u\rangle \varphi \psi
+ \langle r(\hat h), u\rangle \varphi \psi)(x_1,\dots, x_n),
\end{align*}
we can conclude that 
\begin{align}
\label{local_lip_bd}
&
(L\zeta)(x_1,\dots, x_n)
\lesssim 
\|u\|\biggl(\frac{M }{m\sqrt{n}}\Bigl(\|g_n''(0)-g''(0)\|\wedge \frac{m}{4}\Bigr)+ \frac{M L}{m\sqrt{n}}\Bigl(\|\hat h\|\wedge \frac{m}{12L}\Bigr)\biggr).
\end{align}
By the second bound of Proposition \ref{bd_g_n''},
\begin{align*}
\Bigl\|\|g_n''(0)-g''(0)\|\Bigr\|_{\psi_2} 
\lesssim M \sqrt{\frac{d}{n}}.
\end{align*}
By the bound of Corollary \ref{bd_on_hat_theta_wedge_cor},
\begin{align}
\label{bd_on_hat_theta_wedge}
\Bigl\|\|\hat \theta-\theta\| \wedge \frac{m}{12L}\Bigr\|_{\psi_2}
\lesssim  \frac{\sqrt{M}}{m} \sqrt{\frac{d}{n}}+\frac{m}{L\sqrt{\gamma}}\frac{1}{\sqrt{n}}.
\end{align}
Substituting the above bounds in \eqref{local_lip_bd}, we conclude that 
\begin{align*}
\Bigl\|(L\zeta)(X_1,\dots, X_n)\Bigr\|_{\psi_2}
&\lesssim 
\Bigl(\frac{M^2}{m\sqrt{n}} 
\sqrt{\frac{d}{n}} + \frac{M^{3/2} L}{m^2\sqrt{n}}\sqrt{\frac{d}{n}}+
\frac{M}{\sqrt{\gamma}}\frac{1}{n}\Bigr)\|u\|
\\
&
\lesssim \Bigl(\frac{M^2}{m\sqrt{n}} 
\sqrt{\frac{d}{n}} + \frac{M^{3/2} L}{m^2\sqrt{n}}\sqrt{\frac{d}{n}}\Bigr)\|u\|,
\end{align*}
where we also used the fact that the term $\frac{M}{\sqrt{\gamma}}\frac{1}{n}$ is dominated by other terms.

We are now ready to use concentration inequalities for functions of log-concave r.v. to control $\zeta (X_1,\dots, X_n)- {\mathbb E} \zeta (X_1,\dots, X_n).$ For all $p\geq 1,$ we have 
\begin{align*}
&
\Bigl\|\zeta (X_1,\dots, X_n)- {\mathbb E} \zeta (X_1,\dots, X_n)\Bigr\|_{L_p} 
\\
&
\lesssim \sqrt{c(V)} p \Bigl\|(L\zeta)(X_1,\dots, X_n)\Bigr\|_{L_p}
\lesssim \sqrt{c(V)} p^{3/2} \Bigl\|(L\zeta)(X_1,\dots, X_n)\Bigr\|_{\psi_2}.
\end{align*}
It follows that 
\begin{align*}
\Bigl\|\zeta (X_1,\dots, X_n)- {\mathbb E} \zeta (X_1,\dots, X_n)\Bigr\|_{\psi_{2/3}}
&\lesssim  \sqrt{c(V)} \Bigl\|(L\zeta)(X_1,\dots, X_n)\Bigr\|_{\psi_2}
\\
&
\lesssim \sqrt{c(V)} \Bigl(\frac{M^2}{m\sqrt{n}} 
\sqrt{\frac{d}{n}} + \frac{M^{3/2} L}{m^2\sqrt{n}}\sqrt{\frac{d}{n}}\Bigr)\|u\|.
\end{align*}

Recalling representation \eqref{repr_lin_form_check_theta} and bound \eqref{odin}, we get 
\begin{align*}
&
\Bigl\|\langle \check \theta - \theta, w\rangle - {\mathbb E}\langle \check \theta - \theta, w\rangle - \langle {\mathcal I}^{-1}g_n'(0), w\rangle\Bigr\|_{\psi_{2/3}}
\\
&
\lesssim  \sqrt{c(V)} \Bigl(\frac{M^2}{m\sqrt{n}} 
\sqrt{\frac{d}{n}} + \frac{M^{3/2} L}{m^2\sqrt{n}}\sqrt{\frac{d}{n}}\Bigr)\|u\|
+\sqrt{\frac{M}{\gamma}}\frac{1}{n}\|u\|. 
\end{align*}
Since $c(V)\geq \|\Sigma\|\geq \|\mathcal{I}^{-1}\|$ and $M\geq \|\mathcal{I}\|$, we have $c(V)M\geq 1.$ 
Recalling the definition of $\gamma$ and also that 
$\|u\|= \|{\mathcal I}^{-1}w\|\leq \frac{1}{m} \|w\|,$ it is easy to complete the proof.

\qed
\end{proof}

We are ready to prove Theorem \ref{smooth_func_conc}.

\begin{proof}
Note that 
\begin{align*}
f(\check \theta) - f(\theta) = \langle f'(\theta), \check \theta -\theta \rangle + S_f(\theta;\check \theta-\theta).
\end{align*}
Therefore, 
\begin{align}
\label{rep_odin}
\nonumber
&
f(\check \theta) - {\mathbb E}_{\theta} f(\check \theta)
\\
&
=\langle f'(\theta), \check \theta -\theta \rangle - {\mathbb E}_{\theta}\langle f'(\theta), \check \theta-\theta \rangle 
+ S_f(\theta;\check \theta-\theta)- {\mathbb E}_{\theta}S_f(\theta;\check \theta-\theta).
\end{align}
By the bound of Theorem \ref{lin_func_conc}, 
\begin{align}
\label{bd_dva}
\nonumber
&
\Bigl\|\langle f'(\theta), \check \theta - \theta \rangle -{\mathbb E}\langle f'(\theta),\check \theta - \theta\rangle - n^{-1}\sum_{j=1}^n \langle V'(\xi_j), {\mathcal I}^{-1} f'(\theta)\rangle\Bigr\|_{\psi_{2/3}}
\\
&
\leq \sqrt{c(V)} \Bigl(\frac{M^2}{m^2} 
 + \frac{M^{3/2} L}{m^3}\Bigr) \frac{1}{\sqrt{n}}\sqrt{\frac{d}{n}}\|f'(\theta)\|.
\end{align}
Thus, it remains to control $S_f(\theta;\check \theta-\theta)- {\mathbb E}_{\theta}S_f(\theta;\check \theta-\theta).$
By Proposition \ref{remainder_bd} (v),
for function $f\in C^s,$ $s=1+\rho,$ $\rho\in (0,1],$ we have 
\begin{align*}
|S_f(\theta; h) - S_f(\theta; h')| \lesssim \|f\|_{C^s} (\|h\|^{\rho} \vee \|h'\|^{\rho}) \|h-h'\|, \theta, h,h'\in {\mathbb R}^d.
\end{align*}
Combining this with the bound of Proposition \ref{Lip_check_theta}, we easily get 
\begin{align*}
&
\Bigl|S_f(\theta; \check \theta (x_1,\dots, x_n)-\theta) - S_f(\theta; \check \theta (\tilde x_1,\dots, \tilde x_n)-\theta)\Bigr| 
\\
&
\lesssim \|f\|_{C^s} \frac{M}{m \sqrt{n}}(\|\check \theta (x_1,\dots, x_n)-\theta\|^{\rho}\vee \|\check \theta (\tilde x_1,\dots, \tilde x_n)-\theta\|^{\rho}) \Bigl(\sum_{j=1}^n \|x_j-\tilde x_j\|^2\Bigr)^{1/2},
\end{align*}
which implies the following bound on the local Lipschitz function of function $S_f(\theta; \check \theta -\theta):$
\begin{align*}
(L S_f(\theta; \check \theta -\theta))(x_1,\dots, x_n)
\lesssim \|f\|_{C^s} \frac{M}{m \sqrt{n}}\|\check \theta (x_1,\dots, x_n)-\theta\|^{\rho}.
\end{align*}
Using concentration bounds for log-concave r.v., we get 
\begin{align*}
&
\Bigl\|S_f(\theta;\check \theta-\theta)- {\mathbb E}_{\theta}S_f(\theta;\check \theta-\theta)\Bigr\|_{L_p}
\lesssim \sqrt{c(V)} p \Bigl\|(L S_f(\theta; \check \theta -\theta))(X_1,\dots, X_n)\Bigr\|_{L_p}
\\
&
\lesssim \sqrt{c(V)} p  \|f\|_{C^s} \frac{M}{m \sqrt{n}} \Bigl\|\|\check \theta -\theta\|^{\rho}\Bigr\|_{L_p}
\lesssim   \sqrt{c(V)} p  \|f\|_{C^s} \frac{M}{m \sqrt{n}} \Bigl\|\|\check \theta -\theta\|\Bigr\|_{L_p}^{\rho}
\\
&
\lesssim \sqrt{c(V)} p  \|f\|_{C^s} \frac{M}{m \sqrt{n}} \Bigl\|\|\hat \theta -\theta\|\wedge \frac{m}{12L}\Bigr\|_{L_p}^{\rho}
\lesssim \sqrt{c(V)} p^{1+\rho/2}  \|f\|_{C^s} \frac{M}{m \sqrt{n}} \Bigl\|\|\hat \theta -\theta\|\wedge \frac{m}{12L}\Bigr\|_{\psi_2}^{\rho},
\end{align*}
which, using bound \eqref{bd_on_hat_theta_wedge}, implies that  
\begin{align*}
&
\Bigl\|S_f(\theta;\check \theta-\theta)- {\mathbb E}_{\theta}S_f(\theta;\check \theta-\theta)\Bigr\|_{\psi_{2/(2+\rho)}}
\lesssim 
\sqrt{c(V)}  \|f\|_{C^s} \frac{M}{m \sqrt{n}} \Bigl\|\|\hat \theta -\theta\|\wedge \frac{m}{12L}\Bigr\|_{\psi_2}^{\rho}
\\
&
\lesssim \sqrt{c(V)}  \|f\|_{C^s} \biggl(\frac{M^{1+\rho/2}}{m^{1+\rho}}\frac{1}{\sqrt{n}}\Bigl(\frac{d}{n}\Bigr)^{\rho/2} +  
\frac{M}{L^{\rho} m^{1-\rho} \gamma^{\rho/2}}\frac{1}{n^{(1+\rho)/2}}\biggr).
\end{align*}
Combining this with \eqref{rep_odin} and \eqref{bd_dva}, we get 
\begin{align*}
&
\Bigl\|f(\check \theta) - {\mathbb E}_{\theta} f(\check \theta)- n^{-1}\sum_{j=1}^n \langle V'(\xi_j), {\mathcal I}^{-1} f'(\theta)\rangle\Bigr\|_{\psi_{2/3}}
\\
&
\lesssim 
\sqrt{c(V)} \|f'(\theta)\|\Bigl(\frac{M^2}{m^2} + \frac{M^{3/2} L}{m^3} \Bigr)\frac{1}{\sqrt{n}}\sqrt{\frac{d}{n}}
\\
&
+ \sqrt{c(V)}  \|f\|_{C^s} \biggl(\frac{M^{1+\rho/2}}{m^{1+\rho}}\frac{1}{\sqrt{n}}\Bigl(\frac{d}{n}\Bigr)^{\rho/2} +  
\frac{M}{L^{\rho} m^{1-\rho} \gamma^{\rho/2}}\frac{1}{n^{(1+\rho)/2}}\biggr)
\\
&
\lesssim_{M,L,m} \sqrt{c(V)} \|f\|_{C^s} \frac{1}{\sqrt{n}}\Bigl(\frac{d}{n}\Bigr)^{\rho/2}.
\end{align*}
It remains to replace in the above bound $\check \theta$ by $\hat \theta.$ To this end, observe that 
$|f(\hat \theta)-f(\check \theta)|\leq 2\|f\|_{L_{\infty}} I(\check\theta\neq \hat \theta).$
This implies 
\begin{align*}
\|f(\hat \theta)-f(\check \theta)\|_{\psi_1} \leq 2\|f\|_{L_{\infty}} 
\|I(\check\theta\neq \hat \theta)\|_{\psi_1}
\lesssim \frac{\|f\|_{L_{\infty}}}{\gamma n} \lesssim \frac{\|f\|_{C^s}}{\gamma n},
\end{align*}
which allows us to complete the proof.

\qed
\end{proof}

\section{Bias reduction}
We turn now to the bias reduction method outlined in Section \ref{intro}.
The justification of this method is much simpler in the case of equivariant 
estimators $\hat \theta$ of location parameter. 
Indeed, in this case 
\begin{align*}
\hat \theta (X_1,\dots, X_n) = \theta + \hat \theta (\xi_1,\dots, \xi_n), 
\end{align*}
where $\xi_j= X_j-\theta, j=1,\dots, n$ are i.i.d. $\sim P,$ $P(dx)=e^{-V(x)}dx.$
Denote $\vartheta:=\hat \theta (\xi_1,\dots, \xi_n)$ and let $\{\vartheta_k\}$ be a sequence 
of i.i.d. copies of $\vartheta$ defined as follows: 
$
\vartheta_k := \hat \theta (\xi_1^{(k)},\dots, \xi_n^{(k)}),
$
$\xi_j^{(k)}, j=1,\dots, n, k\geq 1$ being i.i.d. copies of $\xi.$
Then, the bootstrap chain $\{\hat \theta^{(k)}: k\geq 0\}$
has the same distribution as the sequence of r.v.
$\{\theta + \sum_{j=1}^k \vartheta_j: k\geq 0\}.$
Moreover, let 
\begin{align*}
\vartheta(t_1,\dots, t_k) := \sum_{j=1}^k t_j\vartheta_j,\  (t_1,\dots, t_k)\in [0,1]^k.
\end{align*}
Then, for $(t_1,\dots, t_k)\in \{0,1\}$ with $\sum_{i=1}^n t_i=j,$ we have 
$\theta+\vartheta(t_1,\dots, t_k)\overset{d}= \hat \theta^{(j)}.$ Therefore, 
we can write 
\begin{align*}
({\mathcal B}^k f)(\theta)&= {\mathbb E}_{\theta}\sum_{j=0}^{k} (-1)^{k-j}\binom{k}{j} f(\hat \theta^{(j)})
\\
&
= {\mathbb E} \sum_{(t_1,\dots, t_k)\in \{0,1\}^k} (-1)^{k-\sum_{j=1}^k t_j} f(\theta+\vartheta(t_1,\dots, t_k))
\\
&
= {\mathbb E} \Delta_1 \dots \Delta_k f(\theta+\vartheta(t_1,\dots, t_k)),
\end{align*}
where 
\begin{align*}
\Delta_j \varphi (t_1,\dots, t_k):=  \varphi (t_1,\dots, t_k)_{| t_j=1}- \varphi (t_1,\dots, t_k)_{| t_j=0}.
\end{align*}
If function $\varphi$ is $k$ times continuously differentiable, then by Newton-Leibniz formula
\begin{align*}
\Delta_1 \dots \Delta_k \varphi (t_1,\dots, t_k)= \int_0^1 \dots \int_0^1 \frac{\partial^k \varphi (t_1,\dots, t_k)}{\partial t_1\dots \partial t_k} d t_1\dots d t_k.
\end{align*}
If $f :{\mathbb R}^d \mapsto {\mathbb R}$ is $k$ times continuously differentiable, then 
\begin{align*}
\frac{\partial^k}{\partial t_1\dots \partial t_k} f(\theta+\vartheta(t_1,\dots, t_k))= f^{(k)}(\theta+\vartheta(t_1,\dots, t_k))[\vartheta_1,\dots, \vartheta_k] 
\end{align*} 
and we end up with the following integral representation formula 
\begin{align}
\label{repr_form_B^k}
({\mathcal B}^k f)(\theta)= {\mathbb E} \int_0^1 \dots \int_0^1
f^{(k)}(\theta+\vartheta(t_1,\dots, t_k))[\vartheta_1,\dots, \vartheta_k] 
d t_1\dots d t_k
\end{align}
that plays an important role in the analysis of functions ${\mathcal B}^k f$ and $f_k.$

It will be convenient to apply this formula not directly to MLE $\hat \theta,$ but to its smoothed and truncated 
approximation $\check \theta,$ defined in the previous section. 
Note that $\check \theta (X_1,\dots, X_n)=\theta + \check \vartheta,$
where 
\begin{align*}
\check \vartheta &= \check \vartheta(\xi_1,\dots, \xi_n)
\\
&
:= \hat \theta (\xi_1,\dots, \xi_n) 
\phi\Bigl(\frac{24L}{m}\|\hat \theta(\xi_1,\dots, \xi_n)\|\Bigr) 
\phi\Bigl(\frac{8}{m}\|g_n''(0)(\xi_1,\dots, \xi_n)-g''(0)\|\Bigr). 
\end{align*} 
Let $\check \vartheta_k, k\geq 1$ be i.i.d. copies of $\check \vartheta$ defined as follows:
\begin{align*}
\check \vartheta_k 
:= \vartheta_k
\phi\Bigl(\frac{24L}{m}\|\vartheta_k\|\Bigr) 
\phi\Bigl(\frac{8}{m}\|g_n''(0)(\xi_1^{(k)},\dots, \xi_n^{(k)})-g''(0)\|\Bigr). 
\end{align*} 
Note that, for all $k\geq 1,$ $\vartheta_k= \check \vartheta_k$ with probability at least $1-2e^{-\gamma n}.$

Let $\check \theta^{(k)}:= \theta + \sum_{j=1}^k \check \vartheta_j, k\geq 0.$
We will also introduce the operators $(\check {\mathcal T} g)(\theta):= {\mathbb E}_{\theta} g(\check \theta), \theta\in {\mathbb R}^d$ and 
$\check {\mathcal B}:= \check {\mathcal T}-{\mathcal I}.$
Let $\check f_k:= \sum_{j=0}^k (-1)^j \check {\mathcal B}^j f.$
Since $\vartheta_j=\check \vartheta_j, j=1,\dots, k$ with probability at least $1-2 k e^{-\gamma n},$ we easily conclude 
that, if we identify $\hat\theta^{(k)}$ with $\theta + \sum_{j=1}^k \vartheta_j$, $k\geq 0$, then the event $E:=\{\check \theta^{(j)}=\hat \theta^{(j)}, j=1,\dots, k\}$ occurs with the same probability. This immediately implies the following 
proposition.

\begin{proposition}
\label{f_k-check_f_k}
For all $k\geq 1,$
\begin{align}
\|f_k-\check f_k\|_{L_{\infty}}
\leq k2^{k+3}\|f\|_{L_{\infty}} e^{-\gamma n}
\end{align}
and 
\begin{align}
\label{bounded_f_k}
\|f_k\|_{L_{\infty}} \leq 2^{k+1}\|f\|_{L_{\infty}},\qquad \|\check f_k\|_{L_{\infty}}\leq  2^{k+1}\|f\|_{L_{\infty}}.
\end{align}
\end{proposition}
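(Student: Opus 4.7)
The plan is to use representation \eqref{representation_f_k_alternative} for both $f_k$ and the analogous expansion for $\check f_k$, namely
\begin{align*}
f_k(\theta)=\mathbb{E}_\theta\sum_{j=0}^k(-1)^j\binom{k+1}{j+1}f(\hat\theta^{(j)}),\qquad \check f_k(\theta)=\mathbb{E}_\theta\sum_{j=0}^k(-1)^j\binom{k+1}{j+1}f(\check\theta^{(j)}),
\end{align*}
where the second identity follows by repeating verbatim the derivation of \eqref{representation_f_k_alternative} with $\mathcal{T}$ replaced by $\check{\mathcal{T}}$ and $\mathcal{B}$ by $\check{\mathcal{B}}$. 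The point of these alternating-binomial formulas is that they reduce both bounds to (i) the elementary combinatorial identity $\sum_{j=0}^k\binom{k+1}{j+1}=2^{k+1}-1$, and (ii) a coupling argument between the two bootstrap chains.

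For the sup-norm bounds \eqref{bounded_f_k}, the triangle inequality applied termwise inside the expectation gives
\begin{align*}
|f_k(\theta)|\le \|f\|_{L_\infty}\sum_{j=0}^k\binom{k+1}{j+1}=(2^{k+1}-1)\|f\|_{L_\infty}\le 2^{k+1}\|f\|_{L_\infty},
\end{align*}
and identically for $\check f_k(\theta)$.

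For the difference bound, I would use the coupling described just before the proposition: identify $\hat\theta^{(j)}=\theta+\sum_{i=1}^j\vartheta_i$ and $\check\theta^{(j)}=\theta+\sum_{i=1}^j\check\vartheta_i$ on a common probability space, and define the event
\begin{align*}
E:=\bigl\{\vartheta_i=\check\vartheta_i\text{ for all }i=1,\dots,k\bigr\}.
\end{align*}
Since each pair $(\vartheta_i,\check\vartheta_i)$ differs only when at least one of the two truncating factors $\phi(\cdot)$ is not equal to $1$, an event of probability at most $2e^{-\gamma n}$ (by Theorem \ref{th_bound_on_hat_theta} and Proposition \ref{bd_g_n''}), a union bound over $i=1,\dots,k$ gives $\mathbb{P}(E^c)\le 2k e^{-\gamma n}$. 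On $E$ one has $f(\hat\theta^{(j)})=f(\check\theta^{(j)})$ for every $j=0,\dots,k$, whereas on $E^c$ each difference is bounded by $2\|f\|_{L_\infty}$. Subtracting the two representations and taking absolute values termwise inside the expectation,
\begin{align*}
|f_k(\theta)-\check f_k(\theta)|\le 2\|f\|_{L_\infty}\,\mathbb{P}(E^c)\sum_{j=0}^k\binom{k+1}{j+1}\le 2\|f\|_{L_\infty}\cdot 2ke^{-\gamma n}\cdot 2^{k+1}=k\,2^{k+3}\|f\|_{L_\infty}e^{-\gamma n},
\end{align*}
which is the desired inequality. There is no real obstacle here beyond verifying that the alternative representation \eqref{representation_f_k_alternative} transfers to $\check f_k$ with $\check{\mathcal{T}},\check{\mathcal{B}}$ in place of $\mathcal{T},\mathcal{B}$, which is immediate since the derivation used only the algebraic identity $\check{\mathcal{T}}=\mathcal{I}+\check{\mathcal{B}}$ and Newton's binomial formula.
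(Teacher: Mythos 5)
Your proof is correct and follows essentially the same approach as the paper: both use the alternating-binomial representation \eqref{representation_f_k_alternative} for $f_k$ and $\check f_k$, the coupling event $E$ with $\mathbb{P}(E^c)\le 2ke^{-\gamma n}$, and the identity $\sum_{j=0}^k\binom{k+1}{j+1}=2^{k+1}-1$. The paper merely asserts that the sup-norm bounds in \eqref{bounded_f_k} ``follow by a similar argument,'' whereas you spell them out; otherwise the two proofs are the same.
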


\begin{proof}
For all $\theta\in {\mathbb R}^d$ and all $j=1,\dots, k,$ 
\begin{align*}
|{\mathbb E}_{\theta} f(\hat \theta^{(j)})- {\mathbb E}_{\theta} f(\check \theta^{(j)})|&=|{\mathbb E}_{\theta} f(\hat \theta^{(j)})I_{E^c}- {\mathbb E}_{\theta} f(\check \theta^{(j)}) I_{E^c}|
\\
&
\leq 2\|f\|_{L_{\infty}} {\mathbb P}(E^c) \leq 4 \|f\|_{L_{\infty}} k e^{-\gamma n}.
\end{align*}
Therefore, applying \eqref{representation_f_k_alternative} to $f_k$ and $\check f_k$, we arrive at
\begin{align*}
|f_k(\theta)-\check f_k(\theta)|&\leq 
\sum_{j=0}^k \binom{k+1}{j+1}|{\mathbb E}_{\theta} f(\hat \theta^{(j)})- {\mathbb E}_{\theta} f(\check \theta^{(j)})|
\\
&
\leq k2^{k+3}\|f\|_{L_{\infty}} e^{-\gamma n},
\end{align*}
which proofs Proposition \eqref{f_k-check_f_k}.
Bounds \eqref{bounded_f_k} follow by a similar argument.


\qed
\end{proof}

Similarly to \eqref{repr_form_B^k}, we get
\begin{align}
\label{repr_check_B^k}
\nonumber
(\check {\mathcal B}^k f)(\theta)
&= {\mathbb E} \int_0^1 \dots \int_0^1
f^{(k)}(\theta+\check \vartheta(t_1,\dots, t_k))[\check \vartheta_1,\dots, \check \vartheta_k] 
d t_1\dots d t_k
\\
&
= 
{\mathbb E} 
f^{(k)}(\theta+\check \vartheta(\tau_1,\dots, \tau_k))[\check \vartheta_1,\dots, \check \vartheta_k],
\end{align}
where $\check \vartheta(t_1,\dots, t_k) := \sum_{j=1}^k t_j\check \vartheta_j,\  (t_1,\dots, t_k)\in [0,1]^k$
and $\tau_1,\dots, \tau_k$ are i.i.d. r.v. with uniform distribution in $[0,1]$ (independent of $\{\check \vartheta_j\}$).

The next proposition follows from representation \eqref{repr_check_B^k} and differentiation under the expectation sign.

\begin{proposition}
\label{norms_of_check_B^k}
Let $f\in C^s$ for $s=k+1+\rho,$ where $k\geq 1$ and $\rho\in (0,1].$
Then, for all $j=1,\dots, k,$ 
\begin{align*}
\|\check {\mathcal B}^j f\|_{C^{1+\rho}} \lesssim \|f\|_{C^s} ({\mathbb E}\|\check \vartheta\|)^j.
\end{align*}
If ${\mathbb E}\|\check \vartheta\|\leq 1/2,$ then 
\begin{align*}
\|\check f_k\|_{C^{1+\rho}} \lesssim \|f\|_{C^s}.
\end{align*}
\end{proposition}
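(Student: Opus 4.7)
The plan is to start from the integral representation \eqref{repr_check_B^k}, which already has $\theta$ isolated as a shift inside $f^{(j)}(\theta+\check\vartheta(\tau_1,\dots,\tau_j))$. Since $\check\vartheta_1,\dots,\check\vartheta_j$ and $\tau_1,\dots,\tau_j$ do not depend on $\theta$, I can differentiate under the expectation sign: for any $\theta\in\mathbb{R}^d$,
\begin{align*}
(\check{\mathcal B}^j f)'(\theta) \eee \mathbb{E}\, f^{(j+1)}(\theta+\check\vartheta(\tau_1,\dots,\tau_j))[\check\vartheta_1,\dots,\check\vartheta_j,\,\cdot\,].
\end{align*}
The dominated convergence needed to justify this is harmless because $\|f^{(j+1)}\|_{L_\infty}\leq \|f\|_{C^s}<\infty$ (note $j+1\leq k+1$), and $\|\check\vartheta_i\|$ is bounded by the truncation used in the definition of $\check\vartheta$.

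Next I would bound operator norms. By the usual estimate for multilinear forms,
\begin{align*}
\|f^{(j+1)}(x)[\check\vartheta_1,\dots,\check\vartheta_j,\,\cdot\,]\|\leq \|f^{(j+1)}(x)\|\prod_{i=1}^{j}\|\check\vartheta_i\|,
\end{align*}
and similarly for $f^{(j)}$ in the original representation. Taking expectation and using that the $\check\vartheta_i$'s are i.i.d.\ (and independent of the $\tau_i$'s), the product factorises:
\begin{align*}
\|(\check{\mathcal B}^j f)'(\theta)\|\leq \|f^{(j+1)}\|_{L_\infty}\prod_{i=1}^j\mathbb{E}\|\check\vartheta_i\| \eee \|f\|_{C^s}\,(\mathbb{E}\|\check\vartheta\|)^j,
\end{align*}
and by the same argument $\|\check{\mathcal B}^j f\|_{L_\infty}\leq \|f\|_{C^s}(\mathbb{E}\|\check\vartheta\|)^j$. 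For the $\rho$-Hölder seminorm of $(\check{\mathcal B}^j f)'$, I would compare at two points $\theta,\theta'$ inside the expectation: the vector $\check\vartheta(\tau_1,\dots,\tau_j)$ and the $\check\vartheta_i$'s cancel identically, so
\begin{align*}
\|(\check{\mathcal B}^j f)'(\theta)-(\check{\mathcal B}^j f)'(\theta')\|\leq \mathbb{E}\,\|f^{(j+1)}(\theta+\check\vartheta(\tau))-f^{(j+1)}(\theta'+\check\vartheta(\tau))\|\prod_i\|\check\vartheta_i\|,
\end{align*}
and the $\rho$-Hölder constant of $f^{(j+1)}$ provides the factor $\|\theta-\theta'\|^\rho$. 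The minor wrinkle is that the definition of $\|\cdot\|_{C^s}$ only records the $\rho$-Hölder constant of the top derivative $f^{(k+1)}$; for $j<k$, I use that $f^{(j+1)}$ is globally Lipschitz (since $\|f^{(j+2)}\|_{L_\infty}\leq \|f\|_{C^s}$) and uniformly bounded, which together imply a $\rho$-Hölder bound $\lesssim \|f\|_{C^s}$ on all of $\mathbb{R}^d$ (split by $\|\theta-\theta'\|\leq 1$ vs.\ $>1$). Combining the three ingredients gives $\|\check{\mathcal B}^j f\|_{C^{1+\rho}}\lesssim \|f\|_{C^s}(\mathbb{E}\|\check\vartheta\|)^j$.

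For the second claim, writing $\check f_k=\sum_{j=0}^k(-1)^j\check{\mathcal B}^j f$ and using the triangle inequality together with the just-established bound (and $\check{\mathcal B}^0 f=f$ with $\|f\|_{C^{1+\rho}}\leq \|f\|_{C^s}$), the right-hand sides form a geometric series with ratio $\mathbb{E}\|\check\vartheta\|\leq 1/2$, which sums to at most $2\|f\|_{C^s}$. No real obstacle arises; the only point requiring care is the Hölder estimate for the intermediate derivatives $f^{(j+1)}$ with $j<k$, addressed by the interpolation remark above.
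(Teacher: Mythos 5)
Your proposal is correct and takes the same approach as the paper, which disposes of the statement in a single sentence by appealing to representation \eqref{repr_check_B^k} and differentiation under the expectation sign; you supply the details that the paper leaves implicit, including the small but genuine point that for $j<k$ the $\rho$-H\"older bound on $f^{(j+1)}$ must be deduced by interpolating between the Lipschitz bound (from $\|f^{(j+2)}\|_{L_\infty}\leq\|f\|_{C^s}$) on $\|\theta-\theta'\|\leq 1$ and the sup-norm bound on $\|\theta-\theta'\|>1$.
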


We can also use representation \eqref{repr_check_B^k} and smoothness of function $\check {\mathcal B}^k f$
to obtain a bound on the bias of ``estimator" $\check f_k(\check \theta).$ 

\begin{proposition}
\label{bias_of_check_f_k}
Let $f\in C^s$ for $s=k+1+\rho,$ where $k\geq 1$ and $\rho\in (0,1].$
Then, for all $\theta\in {\mathbb R}^d,$ 
\begin{align*}
|{\mathbb E}_{\theta} \check f_k(\check \theta) -f(\theta)|
\lesssim 
\|f\|_{C^s} ({\mathbb E}\|\check \vartheta\|)^k (\|{\mathbb E} \check \vartheta\|+  {\mathbb E}\|\check \vartheta\|^{1+\rho}).
\end{align*}
Moreover,
\begin{align*}
|{\mathbb E}_{\theta} \check f_k(\check \theta) -f(\theta)| \lesssim_{M,L,m} \|f\|_{C^s}\Bigl(\sqrt{\frac{d}{n}}\Bigr)^s.
\end{align*}
\end{proposition}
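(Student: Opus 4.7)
The plan is to reduce the bias computation to a bound on $\check{\mathcal B}^{k+1}f$. From the definitions $\check f_k=\sum_{j=0}^k(-1)^j\check{\mathcal B}^j f$ and $\check{\mathcal T}=\mathcal{I}+\check{\mathcal B}$, a telescoping computation yields $\check{\mathcal T}\check f_k=f+(-1)^k\check{\mathcal B}^{k+1}f$; since $(\check{\mathcal T}\check f_k)(\theta)={\mathbb E}_\theta\check f_k(\check\theta)$, it follows that
\begin{equation*}
{\mathbb E}_\theta\check f_k(\check\theta)-f(\theta)=(-1)^k(\check{\mathcal B}^{k+1}f)(\theta),
\end{equation*}
so the entire task becomes bounding $|(\check{\mathcal B}^{k+1}f)(\theta)|$. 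Since $f\in C^s$ with $s=k+1+\rho$ is $k+1$ times continuously differentiable, the integral representation \eqref{repr_check_B^k} extends to level $k+1$ and gives
\begin{equation*}
(\check{\mathcal B}^{k+1}f)(\theta)={\mathbb E}\,f^{(k+1)}\bigl(\theta+\check\vartheta(\tau_1,\ldots,\tau_{k+1})\bigr)[\check\vartheta_1,\ldots,\check\vartheta_{k+1}].
\end{equation*}

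I would then split the integrand by writing $f^{(k+1)}(\theta+\check\vartheta(\tau))=f^{(k+1)}(\theta)+\bigl(f^{(k+1)}(\theta+\check\vartheta(\tau))-f^{(k+1)}(\theta)\bigr)$. The first piece is multilinear and the $\check\vartheta_j$ are i.i.d., so its expectation equals $f^{(k+1)}(\theta)[{\mathbb E}\check\vartheta,\ldots,{\mathbb E}\check\vartheta]$, whose modulus is at most $\|f\|_{C^s}\|{\mathbb E}\check\vartheta\|^{k+1}\leq\|f\|_{C^s}({\mathbb E}\|\check\vartheta\|)^k\|{\mathbb E}\check\vartheta\|$ by Jensen. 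For the second piece, the $\rho$-Hölder continuity of $f^{(k+1)}$ together with the elementary subadditivity $\|\check\vartheta(\tau)\|^\rho\leq\sum_{j=1}^{k+1}\|\check\vartheta_j\|^\rho$ (valid for $\rho\in(0,1]$) and independence yield
\begin{equation*}
\Bigl|{\mathbb E}\bigl(f^{(k+1)}(\theta+\check\vartheta(\tau))-f^{(k+1)}(\theta)\bigr)[\check\vartheta_1,\ldots,\check\vartheta_{k+1}]\Bigr|\lesssim\|f\|_{C^s}\,(k+1)\,{\mathbb E}\|\check\vartheta\|^{1+\rho}({\mathbb E}\|\check\vartheta\|)^k,
\end{equation*}
and combining with the first piece establishes the first displayed inequality.

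For the second bound I would invoke Proposition \ref{bd_on_bias}: $\|{\mathbb E}\check\vartheta\|=\|{\mathbb E}_\theta\check\theta-\theta\|\lesssim_{M,L,m}d/n$ and $({\mathbb E}\|\check\vartheta\|^2)^{1/2}\lesssim_{M,L,m}\sqrt{d/n}$, whence by Jensen ${\mathbb E}\|\check\vartheta\|\lesssim_{M,L,m}\sqrt{d/n}$ and by Lyapunov ${\mathbb E}\|\check\vartheta\|^{1+\rho}\lesssim_{M,L,m}(d/n)^{(1+\rho)/2}$. Since $d\leq n$ and $(1+\rho)/2\leq 1$, the $(d/n)^{(1+\rho)/2}$ term dominates $d/n$ in the bracket $(\|{\mathbb E}\check\vartheta\|+{\mathbb E}\|\check\vartheta\|^{1+\rho})$, so the overall bound reads $\|f\|_{C^s}(\sqrt{d/n})^k\cdot(d/n)^{(1+\rho)/2}=\|f\|_{C^s}(\sqrt{d/n})^s$. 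The main delicate point is justifying that the representation \eqref{repr_check_B^k} still applies at the $(k+1)$-st iterate and arranging the Hölder estimate so that the $\rho$-power can be spread across the i.i.d.\ coordinates, allowing the moments to factorize cleanly; once that structural step is in place, everything else is routine bookkeeping via Jensen and Lyapunov.
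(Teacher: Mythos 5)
Your argument is correct and arrives at exactly the same bound, but via a genuinely different decomposition than the paper's. You apply the integral representation \eqref{repr_check_B^k} directly at level $k+1$ and then split $f^{(k+1)}(\theta+\check\vartheta(\tau))$ into $f^{(k+1)}(\theta)$ plus an increment: the first term is handled via multilinearity and independence (giving $f^{(k+1)}(\theta)[{\mathbb E}\check\vartheta,\dots,{\mathbb E}\check\vartheta]$, bounded by $\|f\|_{C^s}({\mathbb E}\|\check\vartheta\|)^k\|{\mathbb E}\check\vartheta\|$ via Jensen), and the increment is handled by $\rho$-H\"older continuity of $f^{(k+1)}$ together with the subadditivity $\|\sum_j\tau_j\check\vartheta_j\|^\rho\leq\sum_j\|\check\vartheta_j\|^\rho$ and independence to factorize the moments. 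The paper instead writes $(\check{\mathcal B}^{k+1}f)(\theta)={\mathbb E}(\check{\mathcal B}^k f)(\theta+\check\vartheta)-(\check{\mathcal B}^k f)(\theta)$ and Taylor-expands this single first-order difference of the auxiliary function $\check{\mathcal B}^k f$, invoking Proposition~\ref{norms_of_check_B^k} for $\|(\check{\mathcal B}^k f)'\|$ and $\|\check{\mathcal B}^k f\|_{C^{1+\rho}}$ and Proposition~\ref{remainder_bd} for the remainder. The paper's route is more modular (it reuses Proposition~\ref{norms_of_check_B^k} rather than re-deriving a multilinear bound), while yours is more self-contained at the cost of tracking the $(k+1)$-linear form and the $\rho$-power subadditivity explicitly. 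Both are sound; the only minor point worth flagging is that you should note the interchange of differentiation and expectation underlying \eqref{repr_check_B^k} at level $k+1$ is justified because $\check\vartheta$ is bounded a.s. (by construction $\|\check\vartheta\|\leq m/(12L)$) and $f^{(k+1)}$ is bounded and continuous — the same justification the paper tacitly relies on for levels $\leq k$. The second displayed bound follows exactly as in the paper via Proposition~\ref{bd_on_bias}, Jensen, Lyapunov, and $d\lesssim n$.
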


\begin{proof}
Note that 
\begin{align*}
{\mathbb E}_{\theta} \check f_k(\check \theta) -f(\theta)= (-1)^k (\check {\mathcal B}^{k+1} f)(\theta).
\end{align*}
We also have 
\begin{align*}
(\check {\mathcal B}^{k+1} f)(\theta)&= {\mathbb E}_{\theta} (\check {\mathcal B}^{k} f)(\check \theta) - (\check {\mathcal B}^{k} f)(\theta)
\\
&
=\langle (\check {\mathcal B}^{k} f)'(\theta), {\mathbb E}\check \vartheta\rangle +  
{\mathbb E} S_{\check {\mathcal B}^{k} f}(\theta ; \check \vartheta).
\end{align*}
Using bounds of Proposition \ref {norms_of_check_B^k}
and of Proposition \ref{remainder_bd}, we get 
\begin{align*}
|(\check {\mathcal B}^{k+1} f)(\theta)| &\lesssim \|(\check {\mathcal B}^{k} f)'\| \|{\mathbb E} \check \vartheta\|
+\|\check {\mathcal B}^{k} f\|_{C^{1+\rho}} {\mathbb E}\|\check \vartheta\|^{1+\rho} 
\\
&
\lesssim 
\|f\|_{C^s} ({\mathbb E}\|\check \vartheta\|)^k (\|{\mathbb E} \check \vartheta\|+  {\mathbb E}\|\check \vartheta\|^{1+\rho}).
\end{align*}
Using also Proposition \ref{bd_on_bias}, we get
\begin{align*}
|{\mathbb E}_{\theta} \check f_k(\check \theta) -f(\theta)|
\lesssim _{M,L,m}
\|f\|_{C^s} \Bigl(\frac{d}{n}\Bigr)^{k/2} \Bigl(\frac{d}{n}+  \Bigl(\frac{d}{n}\Bigr)^{(1+\rho)/2}\Bigr)
\end{align*}
which allows to complete the proof.

%
%

\qed
\end{proof}

In view of bound \eqref{bounded_f_k}, the bound of Proposition \ref{f_k-check_f_k} and the fact that $\check \theta= \hat \theta$ with 
probability at least $1-2e^{-\gamma n},$ we easily conclude that the following proposition holds:

\begin{proposition}
\label{bounds_on_bias}
Let $f\in C^s$ for $s=k+1+\rho,$ where $k\geq 1$ and $\rho\in (0,1].$ Then, for all $\theta\in {\mathbb R}^d,$ 
\begin{align*}
|{\mathbb E}_{\theta} \check f_k(\hat \theta) -f(\theta)|
\lesssim _{M,L,m,s}
\|f\|_{C^s}\Bigl(\sqrt{\frac{d}{n}}\Bigr)^s
\end{align*}
and 
\begin{align}
|{\mathbb E}_{\theta} f_k(\hat \theta) -f(\theta)|
\lesssim _{M,L,m,s}
\|f\|_{C^s}\Bigl(\sqrt{\frac{d}{n}}\Bigr)^s.
\end{align}
\end{proposition}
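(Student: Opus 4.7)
The plan is to reduce both statements to Proposition \ref{bias_of_check_f_k} by triangle inequality, exploiting the fact that $\hat\theta = \check\theta$ on an event of overwhelming probability.

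For the first statement, I would split
\begin{align*}
|\mathbb{E}_\theta \check f_k(\hat\theta) - f(\theta)| \leq |\mathbb{E}_\theta \check f_k(\hat\theta) - \mathbb{E}_\theta \check f_k(\check\theta)| + |\mathbb{E}_\theta \check f_k(\check\theta) - f(\theta)|,
\end{align*}
where the second term is controlled directly by Proposition \ref{bias_of_check_f_k} giving the bound $\lesssim_{M,L,m} \|f\|_{C^s}(\sqrt{d/n})^s$. For the first term, recall from the construction of $\check\theta$ via the cutoffs $\varphi$ and $\psi$ that the event $E := \{\varphi\psi = 1\} \subset \{\hat\theta = \check\theta\}$ has $\mathbb{P}_\theta(E^c) \leq 2e^{-\gamma n}$. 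Since $\check f_k(\hat\theta)$ and $\check f_k(\check\theta)$ agree on $E$, bound \eqref{bounded_f_k} yields
\begin{align*}
|\mathbb{E}_\theta \check f_k(\hat\theta) - \mathbb{E}_\theta \check f_k(\check\theta)| \leq 2\|\check f_k\|_{L_\infty}\,\mathbb{P}_\theta(E^c) \leq 2^{k+3}\|f\|_{L_\infty}\,e^{-\gamma n}.
\end{align*}
Using $\|f\|_{L_\infty}\leq \|f\|_{C^s}$ and the elementary fact that $e^{-\gamma n}\lesssim_{s,\gamma} n^{-s/2} \leq (\sqrt{d/n})^s$ (since $d\geq 1$), this contribution is dominated by the desired rate, completing the first claim with a constant depending additionally on $s$ (and $\gamma$, hence on $M,L,m$).

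For the second statement, write
\begin{align*}
|\mathbb{E}_\theta f_k(\hat\theta) - f(\theta)| \leq \|f_k - \check f_k\|_{L_\infty} + |\mathbb{E}_\theta \check f_k(\hat\theta) - f(\theta)|,
\end{align*}
and apply Proposition \ref{f_k-check_f_k} to bound the first term by $k2^{k+3}\|f\|_{L_\infty}\,e^{-\gamma n}$, which is again negligible compared to $(\sqrt{d/n})^s$ by the same exponential-versus-polynomial comparison; the second term is exactly what the first statement provides.

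There is no real obstacle here: the proposition is an immediate corollary of Proposition \ref{bias_of_check_f_k} combined with the uniform bounds of Proposition \ref{f_k-check_f_k}. The only item to verify carefully is that $e^{-\gamma n}$ is indeed dominated by $(\sqrt{d/n})^s$ up to constants depending on $M,L,m,s$, which is routine.
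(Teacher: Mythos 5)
Your proposal is correct and follows essentially the same route as the paper: the paper derives Proposition~\ref{bounds_on_bias} by combining Proposition~\ref{bias_of_check_f_k} with the uniform bound~\eqref{bounded_f_k}, the $L_\infty$ comparison from Proposition~\ref{f_k-check_f_k}, and the fact that $\check\theta=\hat\theta$ off an event of probability at most $2e^{-\gamma n}$, exactly as you do. The only implicit point worth flagging is that the polynomial-dominates-exponential comparison uses $d\leq\gamma n$ (so $(\sqrt{d/n})^s\geq n^{-s/2}$), which is a standing assumption inherited from the construction of $\check\theta$; you note this correctly.
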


It is now easy to prove Theorem \ref{main_theorem}.

\begin{proof}
For all $\theta\in\mathbb{R}^d$, 
\begin{align}
& 
\nonumber
\Bigl\|f_k(\hat \theta) - f(\theta)- n^{-1}\sum_{j=1}^n \langle V'(\xi_j), {\mathcal I}^{-1} f'(\theta)\rangle\Bigr\|_{\psi_{2/3}}
\\
&
\nonumber
\leq \Bigl\|\check f_k(\hat \theta) - {\mathbb E}_{\theta} \check f_k(\hat \theta)- n^{-1}\sum_{j=1}^n \langle V'(\xi_j), {\mathcal I}^{-1} \check f_k'(\theta)\rangle\Bigr\|_{\psi_{2/3}}
\\
&
\nonumber
+\Bigl\| n^{-1}\sum_{j=1}^n \langle V'(\xi_j), {\mathcal I}^{-1}\check f'_k(\theta)-{\mathcal I}^{-1} f'(\theta)\rangle\Bigr\|_{\psi_{2/3}}
\\
&
\label{eq_decomposition_proof_main_theorem}
+\|f_k-\check f_k\|_{L_\infty}+|\mathbb{E}_\theta\check f_k(\hat \theta)-f(\theta)|.
\end{align}
Applying Theorem \ref{smooth_func_conc} to function $\check f_k$ and using the second bound of Proposition \ref{norms_of_check_B^k}, we get
\begin{align*}
&
\Bigl\|\check f_k(\hat \theta) - {\mathbb E}_{\theta} \check f_k(\hat \theta)- n^{-1}\sum_{j=1}^n \langle V'(\xi_j), {\mathcal I}^{-1} \check f_k'(\theta)\rangle\Bigr\|_{\psi_{2/3}}
\\
&
\lesssim_{M,L,m} \sqrt{c(V)} \|\check f_k\|_{C^{1+\rho}} \frac{1}{\sqrt{n}}\Bigl(\frac{d}{n}\Bigr)^{\rho/2}
\lesssim_{M,L,m} \sqrt{c(V)} \|f\|_{C^s} \frac{1}{\sqrt{n}}\Bigl(\frac{d}{n}\Bigr)^{\rho/2}.
\end{align*}
Moreover, by Lemma \ref{bd_on_V'} and the first bound in Proposition \ref{norms_of_check_B^k}, we have
\begin{align*}
   &\Bigl\| n^{-1}\sum_{j=1}^n \langle V'(\xi_j), {\mathcal I}^{-1}\check f'_k(\theta)-{\mathcal I}^{-1} f'(\theta)\rangle\Bigr\|_{\psi_{2/3}}\\
   &\lesssim\frac{1}{m}\frac{\sqrt{M}}{\sqrt{n}}\|\check f'_k(\theta)-f'(\theta)\|\leq  \frac{1}{m}\frac{\sqrt{M}}{\sqrt{n}}\sum_{j=1}^k\|(\mathcal{B}^jf)'(\theta)\|\lesssim \|f\|_{C^s}\frac{1}{m}\frac{\sqrt{M}}{\sqrt{n}}.
\end{align*}
Inserting these inequalities into \eqref{eq_decomposition_proof_main_theorem} and applying  Propositions \ref{f_k-check_f_k} and \ref{bias_of_check_f_k} to the last two term in \eqref{eq_decomposition_proof_main_theorem}, allows to complete the proof.


\qed
\end{proof}

Next we provide the proof of Proposition \ref{prop_min_max_upper}.

\begin{proof}
The minimum with $1$ in both bounds is due to the fact that $\|f_k\|_{L_{\infty}}\lesssim \|f\|_{L_{\infty}}\leq \|f\|_{C^s};$
so, the left hand side is trivially bounded up to a constant by $\|f\|_{C^s}.$

To prove the first claim, note that, for $f$ with $\|f\|_{C^s}\leq 1,$
\begin{align*}
\|f(\hat \theta)-f(\theta)\|_{L_2({\mathbb P}_{\theta})} &\leq 
\Bigl\|\Bigl(\|\hat \theta -\theta\|\wedge \frac{m}{12L}\Bigr)^s\Bigr\|_{L_2({\mathbb P}_{\theta})}
+ 2\Bigl\|I(\|\hat \theta-\theta\|\geq m/(12L))\Bigr\|_{L_2({\mathbb P}_{\theta})}
\\
&
\leq \Bigl(\Bigl\|\|\hat \theta -\theta\|\wedge \frac{m}{12L}\Bigr\|_{L_2({\mathbb P}_{\theta})}\Bigr)^s
+ 2 {\mathbb P}_{\theta}^{1/2}\Bigl\{\|\hat \theta-\theta\|\geq m/(12L)\Bigr\}.
\end{align*}
Using the bound of Corollary \ref{bd_on_hat_theta_wedge_cor}, we get 
\begin{align*}
\Bigl(\Bigl\|\|\hat \theta -\theta\|\wedge \frac{m}{12L}\Bigr\|_{L_2({\mathbb P}_{\theta})}\Bigr)^s
\lesssim_{M,L,m} \Bigl(\sqrt{\frac{d}{n}}\Big)^s,
\end{align*}
and, by the bound of Theorem \ref{th_bound_on_hat_theta}, we easily get 
\begin{align*}
{\mathbb P}_{\theta}\Bigl\{\|\hat \theta-\theta\|\geq m/(12L)\Bigr\} \leq e^{-\gamma n}.
\end{align*}
The first claim now easily follows.

The proof of the second claim easily follows from Corollary \ref{cor_L_2}.
We can assume that $d\leq \gamma n$ (otherwise, the bound is obvious)
and we can drop the term $\sqrt{\frac{c(V)}{n}}\Bigl(\frac{d}{n}\Bigr)^{\rho/2}$ in the bound of Corollary \ref{cor_L_2}: it is smaller than $\frac{1}{\sqrt{n}}$ since $c(V)\lesssim_{\epsilon} d^{\epsilon}\|\Sigma\|$ for all $\epsilon>0$ and $\|\Sigma\|\lesssim 1.$

\qed
\end{proof}

We will sketch the proof of Theorem  \ref{main_theorem_AAA}.

\begin{proof}
Under the stronger condition $V''(x)\succeq m I_d,$ the proof of Theorem \ref{main_theorem} could be significantly simplified.
Recall that, for $\hat h=\theta-\hat \theta,$ $g_n'(\hat h)=0.$ This implies that 
\begin{align}
\label{od}
g_n'(0) = g_n'(0)-g_n'(\hat h) = - \int_0^1 g_n''(\lambda \hat h) d\lambda\ \hat h.
\end{align}
The condition $V''(x)\succeq m I_d$ easily implies that 
\begin{align*}
\int_0^1 g_n''(\lambda \hat h) d\lambda=\int_0^1 n^{-1}\sum_{j=1}^n V''(\xi_j + \lambda \hat h) d\lambda 
\succeq m I_d.
\end{align*}
Therefore, $\Bigl\|\int_0^1 g_n''(\lambda \hat h) d\lambda\ u\Bigr\|\geq m \|u\|, u\in {\mathbb R}^d.$
Combining this with \eqref{od} yields
$
\|\hat h\| \leq \frac{\|g_n'(0)\|}{m}.
$
By Corollary \ref{cor_g_n'}, we get that 
for all $t\geq 1,$ with probability at least $1-e^{-t}$
\begin{align*}
\|\hat \theta-\theta\|= \|\hat h\| \lesssim \frac{\sqrt{M}}{m}\Bigl(\sqrt{\frac{d}{n}}\vee \sqrt{\frac{t}{n}}\Bigr).
\end{align*}
Unlike the case of Theorem \ref{th_bound_on_hat_theta}, the above bound holds in the whole range of $t\geq 1$
and it immediately implies that ${\mathbb E}_{\theta}^{1/2}\|\hat \theta -\theta\|^2\lesssim \frac{\sqrt{M}}{m}\sqrt{\frac{d}{n}},$
and, moreover, $\|\|\hat \theta -\theta\|\|_{L_{\psi_2}({\mathbb P}_{\theta})}\lesssim \frac{\sqrt{M}}{m}\sqrt{\frac{d}{n}}.$

Quite similarly, one can show that, unlike the case of Proposition \ref{Lip_basic},  the Lipschitz condition for the function 
${\mathbb R}^d\times \dots \times {\mathbb R}^d\ni(x_1,\dots, x_n) \mapsto \hat \theta(x_1,\dots, x_n)\in {\mathbb R}^d$
holds not just on set $A,$ but on the whole space. Indeed, recall that $g_n'(\hat h)=0$ and $\tilde g_n'(\tilde h)=0.$
This implies that 
\begin{align*}
\tilde g_n'(\tilde h) - g_n'(\tilde h) = g_n'(\hat h) - g_n'(\tilde h)
=\int_{0}^1 g_n''(\tilde h+ \lambda (\hat h-\tilde h))d\lambda\ (\hat h-\tilde h). 
\end{align*}
Since $\int_{0}^1 g_n''(\tilde h+ \lambda (\hat h-\tilde h))d\lambda\succeq m I_d,$ we get 
\begin{align*}
\|\tilde g_n'(\tilde h) - g_n'(\tilde h)\| \geq m \|\hat h -\tilde h\|,
\end{align*}
which implies 
\begin{align*}
\|\tilde \theta  -\hat \theta\|&= \|\hat h-\tilde h\| \leq m^{-1} n^{-1}\sum_{j=1}^n \|V'(\tilde h+ \tilde \xi_j)- V'(\tilde h+ \xi_j)\|
\\
&
\leq m^{-1} n^{-1} M \sum_{j=1}^n \|\tilde \xi_j-\xi_j\| \leq \frac{M}{m \sqrt{n}} \Bigl(\sum_{j=1}^n \|\tilde \xi_j-\xi_j\|^2\Bigr)^{1/2} 
\\
&
=\frac{M}{m \sqrt{n}} \Bigl(\sum_{j=1}^n \|\tilde x_j-x_j\|^2\Bigr)^{1/2},
\end{align*}
and the Lipschitz condition holds for the function 
${\mathbb R}^d\times \dots \times {\mathbb R}^d\ni(x_1,\dots, x_n) \mapsto \hat \theta(x_1,\dots, x_n)\in {\mathbb R}^d$
with constant $ \frac{M}{m \sqrt{n}} .$ 
Due to this fact, there is no need to consider a ``smoothed version" $\check \theta$ of function $\hat \theta$ in the remainder
of the proof (as it was done in the proof of Theorem \ref{main_theorem}).  All the arguments could be applied directly to $\hat \theta.$    

Finally, recall that, if $\xi \sim P,$ $P(dx)=e^{-V(x)}dx$ with $V''(x)\succeq m I_d, x\in {\mathbb R}^d,$
then, for all locally Lipschitz functions $g:{\mathbb R}^d\mapsto {\mathbb R},$ the following logarirthmic Sobolev inequality holds
\begin{align*}
{\mathbb E} g^2(\xi) \log g^2(\xi) - {\mathbb E}g^2(\xi) \log {\mathbb E}g^2(\xi)
\lesssim \frac{1}{m} {\mathbb E} \|\nabla g(\xi)\|^2
\end{align*}
 (see, e.g., \cite{Ledoux}, Theorem 5.2). It was proved in \cite{A_S} (see also \cite{ABW}) that this implies the following 
 moment bound:
 \begin{align*}
 \|g(\xi)-{\mathbb E}g(\xi)\|_{L_p} \lesssim_m \sqrt{p} \|\|\nabla g(\xi)\|\|_{L_p}, p\geq 2.
 \end{align*} 
 This bound is used to modify the concentration inequalities of Section \ref{Sec:conc}, which 
 yields the claim of Theorem \ref{main_theorem_AAA}. 

\qed
\end{proof}

\section{Minimax lower bounds}
\label{Sec:lower_bounds}
In this section, we provide lower bounds for the estimation of the location parameter and functionals thereof
that match the upper bounds obtained in the previous sections up to constants.

We start with a comment on the proof of Proposition \ref{min_max_lower}. 

\begin{proof}
It follows the same line of arguments as in the proof of Theorem 2.2 in \cite{Koltchinskii_Zhilova_19}. It is based on a construction of a set ${\mathcal F}$ of smooth functionals such that the existence of estimators 
of $f(\theta)$ for all $f\in {\mathcal F}$ with some error rate would allow one to design an estimator of parameter $\theta$ itself with a certain 
error rate. This rate is then compared with a minimax lower bound 
$\inf_{\hat \theta}\max_{\theta\in\Theta}\mathbb{E}_\theta\|\hat\theta-\theta\|^2$ in the parameter estimation problem, 
where $\Theta$ is a maximal $\eps$-net of the unit sphere (for a suitable $\eps$), yielding as a result a minimax lower bound in the 
functional estimation. Minimax lower bound in the parameter estimation problem can be deduced in a standard way from Theorem 2.5 of \cite{Tsybakov} using KL-divergence (Fano's type argument). 
In fact, while in the Gaussian location model KL-divergence coincides with $1/2$ times the squared Euclidean distance, a similar property also holds for our log-concave location models:
\begin{align*}
K(P_\theta\|P_{\theta'})
&=\mathbb{E} (V(\xi+\theta-\theta')-V(\xi))\\
&=\mathbb{E} (V(\xi+\theta-\theta')-V(\xi)-\langle V'(\xi),\theta-\theta'\rangle)
\leq M\|\theta-\theta'\|^2/2,
\end{align*}  
where we used Proposition \ref{remainder_bd} and Assumption \ref{Assump_main} in the inequality.

\qed
\end{proof}

Our next goal is to provide the proof of the local minimax lower bound of 
Proposition \ref{min_max_lower_local}.
It will be based on Bayes risk lower bounds for the estimation of location parameter as well as functionals thereof that might be of independent interest.

Let $(\mathcal{X},\mathcal{F},(\mathbb{P}_\theta)_{\theta\in\Theta})$ be a statistical model and let $G$ be a topological group acting on 
the measurable space $(\mathcal{X},\mathcal{F})$ and $\Theta.$ Let $\Theta$ be such that $(\mathbb{P}_\theta)_{\theta\in\Theta}$ is $G$-equivariant, 
i.e., $\mathbb{P}_{g\theta}(gA)=\mathbb{P}_\theta(A)$ for all $g\in G, \theta\in\Theta$ and $A\in\mathcal{F}.$ 
Suppose that $g\mapsto \mathbb{P}_{g\theta}(A)$ is measurable for every $A\in\mathcal{F}$, $\theta\in\Theta$.

Recall that, for two probability measures $\mu, \nu$ on an arbitrary measurable space with $\mu$ being absolutely continuous w.r.t. 
$\nu,$ the $\chi^2$-divergence $\chi^2(\mu,\nu)$ is defined as 
\begin{align*}
\chi^2(\mu,\nu):= \int \Bigl(\frac{d\mu}{d\nu}-1\Bigr)^2 d\nu = \int \Bigl(\frac{d\mu}{d\nu}\Bigr)^2d\nu-1.
\end{align*}

The key ingredient in our proofs is the following version of an equivariant van Trees type inequality established in \cite{W20}, Proposition 1.

\begin{lemma}
\label{prop:equivariant:chapman:robbins:ineq}
Let $\Pi$ be a Borel probability measure on $G,$ let $\psi:\Theta\rightarrow \mathbb{R}^m$ be a derived parameter such that $\int_G\|\psi(g\theta)\|^2\Pi(dg)<\infty$ for all $\theta\in\Theta$ and let  
$\hat\psi:\mathcal{X}\rightarrow \mathbb{R}^m$ be an estimator of $\psi(\theta),$ 
based on an observation 
$X\sim {\mathbb P}_{\theta}, \theta \in \Theta.$ 
Then, for all $\theta\in\Theta$ and all $h_1,\dots,h_m\in G$, we have  
\begin{align*}
&\int_{G}\mathbb{E}_{g\theta}\|\hat\psi(X)-\psi(g\theta)\|^2\,\Pi(dg)\\
&\geq \frac{\big( \sum_{j=1}^m\int_G(\psi_j(gh_j^{-1}\theta)-\psi_j(g\theta))\,\Pi(dg)\big)^2}{\sum_{j=1}^m\big(\chi^2(\mathbb{P}_{h_j\theta},\mathbb{P}_\theta)+\chi^2(\Pi\circ R_{h_j},\Pi)+\chi^2(\mathbb{P}_{h_j\theta},\mathbb{P}_\theta)\chi^2(\Pi\circ R_{h_j},\Pi)\big)},
\end{align*}
with $\Pi\circ R_{h_j}$ defined by $(\Pi\circ R_{h_j})(B)=\Pi(Bh_j)$ for all Borel sets $B\subset G.$
\end{lemma}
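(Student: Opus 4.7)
The argument is a two-measure (van Trees / Chapman--Robbins) bound adapted to the equivariant setting. On $G\times\mathcal{X}$ I would introduce the reference joint
\begin{align*}
Q_0(dg,dX):=\Pi(dg)\,\mathbb{P}_{g\theta}(dX),
\end{align*}
whose Bayes risk $\sum_{j=1}^m\mathbb{E}_{Q_0}[(\hat\psi_j(X)-\psi_j(g\theta))^2]$ is precisely the left-hand side of the claimed inequality, together with, for each $j=1,\dots,m$, the perturbed joint
\begin{align*}
Q_j(dg,dX):=(\Pi\circ R_{h_j})(dg)\,\mathbb{P}_{gh_j\theta}(dX).
\end{align*}
The measurability hypothesis on $g\mapsto\mathbb{P}_{g\theta}(A)$ ensures both are bona fide probability measures.

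\textbf{Identifying the numerator.} Let $T_j(g,X):=\hat\psi_j(X)-\psi_j(g\theta)$. The deterministic map $(g,X)\mapsto(gh_j,X)$ pushes $Q_j$ forward to $Q_0$: if $g\sim\Pi\circ R_{h_j}$ then $gh_j\sim\Pi$, and the conditional law $\mathbb{P}_{gh_j\theta}$ rewrites as $\mathbb{P}_{(gh_j)\theta}$. In particular, the $X$-marginals of $Q_j$ and $Q_0$ agree, so $\mathbb{E}_{Q_j}[\hat\psi_j(X)]=\mathbb{E}_{Q_0}[\hat\psi_j(X)]$. On the $\psi$-side the same change of variable gives $\mathbb{E}_{Q_j}[\psi_j(g\theta)]=\int\psi_j(gh_j^{-1}\theta)\,\Pi(dg)$, and subtracting yields
\begin{align*}
\mathbb{E}_{Q_0}[T_j]-\mathbb{E}_{Q_j}[T_j]\;=\;\int_G\bigl(\psi_j(gh_j^{-1}\theta)-\psi_j(g\theta)\bigr)\Pi(dg)\;=:\;\Delta_j,
\end{align*}
which is the $j$-th term in the numerator.

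\textbf{Identifying the denominator.} The Radon--Nikodym derivative splits as
\begin{align*}
\frac{dQ_j}{dQ_0}(g,X)=\frac{d(\Pi\circ R_{h_j})}{d\Pi}(g)\cdot\frac{d\mathbb{P}_{gh_j\theta}}{d\mathbb{P}_{g\theta}}(X),
\end{align*}
and equivariance $\mathbb{P}_{g\theta}=g_{*}\mathbb{P}_\theta$ lets me substitute $Y=g^{-1}X$ in the inner integral, turning $\int(d\mathbb{P}_{gh_j\theta}/d\mathbb{P}_{g\theta})^2\,\mathbb{P}_{g\theta}(dX)$ into $1+\chi^2(\mathbb{P}_{h_j\theta},\mathbb{P}_\theta)$ independently of $g$. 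Integrating against $\Pi$ then yields the tensorization
\begin{align*}
1+\chi^2(Q_j,Q_0)=\bigl(1+\chi^2(\Pi\circ R_{h_j},\Pi)\bigr)\bigl(1+\chi^2(\mathbb{P}_{h_j\theta},\mathbb{P}_\theta)\bigr),
\end{align*}
so $\chi^2(Q_j,Q_0)$ equals the $j$-th summand appearing in the denominator.

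\textbf{Closing with two Cauchy--Schwarz inequalities.} Writing $\Delta_j=-\mathbb{E}_{Q_0}\bigl[T_j\bigl(\tfrac{dQ_j}{dQ_0}-1\bigr)\bigr]$ and applying Cauchy--Schwarz under $Q_0$ gives $\Delta_j^2\leq\mathbb{E}_{Q_0}[T_j^2]\cdot\chi^2(Q_j,Q_0)$. A second Cauchy--Schwarz across $j=1,\dots,m$, with weights $\chi^2(Q_j,Q_0)$, yields
\begin{align*}
\Bigl(\sum_{j=1}^m\Delta_j\Bigr)^{\!2}\leq\Bigl(\sum_{j=1}^m\mathbb{E}_{Q_0}[T_j^2]\Bigr)\Bigl(\sum_{j=1}^m\chi^2(Q_j,Q_0)\Bigr),
\end{align*}
and since $\sum_j\mathbb{E}_{Q_0}[T_j^2]=\int_G\mathbb{E}_{g\theta}\|\hat\psi(X)-\psi(g\theta)\|^2\,\Pi(dg)$, rearrangement gives the stated inequality. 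I expect the only genuinely delicate step to be the equivariance-driven reduction of the conditional chi-squared to the $g$-independent quantity $\chi^2(\mathbb{P}_{h_j\theta},\mathbb{P}_\theta)$; the rest is routine change-of-variable bookkeeping on $G$.
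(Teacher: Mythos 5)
The paper does not prove Lemma \ref{prop:equivariant:chapman:robbins:ineq}; it cites it as Proposition 1 of \cite{W20}. Your derivation is a self-contained proof and, up to notation, it is the standard route for an equivariant van~Trees/Chapman--Robbins bound, which is almost certainly what \cite{W20} does. I checked the steps and they are sound: the pushforward of $Q_j$ under $(g,X)\mapsto(gh_j,X)$ is indeed $Q_0$, which gives both the cancellation of the $\hat\psi_j(X)$ terms and the expression for $\Delta_j$; the joint likelihood ratio factors, and equivariance (each $g$ acting as a measurable bijection with measurable inverse) makes the inner $\chi^2(\mathbb{P}_{gh_j\theta},\mathbb{P}_{g\theta})=\chi^2(\mathbb{P}_{h_j\theta},\mathbb{P}_\theta)$ independent of $g$, whence $1+\chi^2(Q_j,Q_0)=(1+\chi^2(\mathbb{P}_{h_j\theta},\mathbb{P}_\theta))(1+\chi^2(\Pi\circ R_{h_j},\Pi))$, exactly reproducing the three-term denominator; and the two Cauchy--Schwarz applications assemble the bound. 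The only point worth making explicit is the degenerate case: if $Q_j\not\ll Q_0$ (equivalently, if one of the one-dimensional $\chi^2$'s is infinite) or if the left-hand side is infinite, the inequality holds trivially, so one may assume finiteness and absolute continuity throughout; likewise the hypothesis $\int_G\|\psi(g\theta)\|^2\,\Pi(dg)<\infty$ together with finiteness of the left-hand side guarantees that $T_j\in L^2(Q_0)$, which is what the first Cauchy--Schwarz needs. With those routine caveats added, your proof is complete and correct.
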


This lemma will be applied to our log-concave location model with $G$ being the group of all translations of ${\mathbb R}^d.$
The Bayes risk lower bound will be formulated for the class of all prior density functions $\pi:\mathbb{R}^d\mapsto {\mathbb R}_+$ with respect to the Lebesgue measure on $\mathbb{R}^d$ satisfying one of the following two conditions:
\begin{enumerate}
\item[(P1)] $\pi=e^{-W}$ with $W:\mathbb{R}^d\mapsto \mathbb{R}$ being twice differentiable such that $\|W''\|_{L_{\infty}}$ and $\|W''\|_{\operatorname{Lip}}$ are finite,
\item[(P2)] $\pi$ has bounded support and is twice differentiable such that $\|\pi^{(j)}\|_{L_{\infty}}$ and $\|\pi^{(j)}\|_{\operatorname{Lip}}$ are finite for $j=0,1,2$ (actually, 
it suffices to assume it only for $j=2$ since the support is bounded). 
\end{enumerate}

Our first result deals with the estimation of the location parameter itself. We assume that i.i.d. observations $X_1,\dots, X_n$
are sampled from a distribution belonging to a log-concave location family $e^{-V(x-\theta)}dx, \theta\in {\mathbb R}^d$
with convex function $V$ satisfying Assumption \ref{Assump_main}.

\begin{theorem}
\label{theorem:van:Trees:theta}
Let $\Pi$ be a probability measure on $\mathbb{R}^d$ with density  $\pi:\mathbb{R}^d\mapsto {\mathbb R}_+$ with respect to the Lebesgue measure satisfying either (P1) or (P2). Suppose that $\pi$ has finite Fisher information matrix
\begin{align*}
\mathcal{J}_\pi=\int_{\mathbb{R}^d}\frac{\pi'(\theta)\otimes \pi'(\theta)}{\pi(\theta)}\,d\theta.
\end{align*}
Then, for all $\delta>0$, we have
\begin{align}\label{eq:van:Trees:theta}
 \inf_{\hat\theta_n}\int_{\mathbb{R}^d}\mathbb{E}_{\theta}\|\hat\theta_n-\theta\|^2\,\Pi_\delta(d\theta)\geq 
 \frac{1}{n}\operatorname{tr}\Big(\Big(\mathcal{I}+\frac{1}{\delta^2n}\mathcal{J}_\pi\Big)^{-1}\Big),
\end{align}
where $\Pi_\delta$ is the prior measure with density $\pi_\delta(\theta)=\delta^{-d}\pi(\delta^{-1}\theta)$, $\theta\in\mathbb{R}^d$
and $\hat \theta_n = \hat \theta_n(X_1,\dots, X_n)$ is an arbitrary estimator based on $(X_1,\dots, X_n).$
\end{theorem}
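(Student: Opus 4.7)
\medskip
\noindent\textbf{Proof plan.}
My plan is to apply Lemma \ref{prop:equivariant:chapman:robbins:ineq} to the translation group $G=\mathbb{R}^d$ acting on $\Theta=\mathbb{R}^d$, with basepoint $\theta=0$ so that the prior average on the left hand side becomes $\int_{\mathbb{R}^d}\mathbb{E}_\theta\|\hat\theta_n-\theta\|^2\Pi_\delta(d\theta)$. For arbitrary $c\in\mathbb{R}^d$ and direction $v\in\mathbb{R}^d$, I invoke the scalar version ($m=1$) of the lemma with $\psi(\theta)=\langle c,\theta\rangle$ and $h=tv$ for small $t>0$. The numerator then equals $t^2\langle c,v\rangle^2$, and the denominator involves $\chi^2(P_{tv}^{\otimes n},P_0^{\otimes n})$ and $\chi^2(\Pi_\delta\circ R_{tv},\Pi_\delta)$.

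The technical heart of the argument is to expand these two $\chi^2$-divergences to order $t^2$. Translation invariance reduces the first one to $\chi^2(P_{tv},P_0)=\int e^{V(y)-2V(y-tv)}dy-1$. A second order Taylor expansion of $V$ (with $\|V''\|_{L_\infty}\leq M$ from Assumption \ref{Assump_main} dominating the remainders), combined with $\int V'(x)e^{-V(x)}dx=0$ and the alternative representation $\mathcal{I}=\int V''(x)e^{-V(x)}dx$, yields $\chi^2(P_{tv},P_0)=t^2\langle\mathcal{I}v,v\rangle+o(t^2)$ and hence, via the tensor identity, $\chi^2(P_{tv}^{\otimes n},P_0^{\otimes n})=(1+\chi^2(P_{tv},P_0))^n-1=nt^2\langle\mathcal{I}v,v\rangle+o(t^2)$. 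Under hypothesis (P1), an analogous expansion for the prior (using $\|W''\|_{L_\infty}<\infty$ and the change of variables $u=\delta^{-1}\theta$, which accounts for the factor $\delta^{-2}$) gives $\chi^2(\Pi_\delta\circ R_{tv},\Pi_\delta)=t^2\delta^{-2}\langle\mathcal{J}_\pi v,v\rangle+o(t^2)$. Sending $t\downarrow 0$ cancels the $t^2$ factor in the ratio and kills the quadratic cross term $\chi^2\cdot\chi^2$, giving
\[\mathbb{E}_{\Pi_\delta,\theta}\langle c,\hat\theta_n-\theta\rangle^2\geq\frac{\langle c,v\rangle^2}{n\langle\mathcal{I}v,v\rangle+\delta^{-2}\langle\mathcal{J}_\pi v,v\rangle}.\]
Optimizing over $v$ (the optimum being $v=(n\mathcal{I}+\delta^{-2}\mathcal{J}_\pi)^{-1}c$) yields $\mathbb{E}\langle c,\hat\theta_n-\theta\rangle^2\geq\langle c,(n\mathcal{I}+\delta^{-2}\mathcal{J}_\pi)^{-1}c\rangle$, and summing this bound over $c=e_1,\ldots,e_d$ produces $\mathbb{E}\|\hat\theta_n-\theta\|^2\geq\operatorname{tr}((n\mathcal{I}+\delta^{-2}\mathcal{J}_\pi)^{-1})=n^{-1}\operatorname{tr}((\mathcal{I}+(\delta^2n)^{-1}\mathcal{J}_\pi)^{-1})$, which is the claim.

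The hard part is case (P2): when $\pi_\delta$ has bounded support, $\chi^2(\Pi_\delta\circ R_{tv},\Pi_\delta)=+\infty$ for every $t\neq 0$ (the supports of the shifted and unshifted prior do not match), so the Chapman-Robbins denominator is infinite and the limiting argument above is vacuous. To accommodate (P2) I would supplement the above with the standard integration-by-parts van Trees inequality applied to the joint density $q(\theta,x)=\pi_\delta(\theta)\prod_{i=1}^np_\theta(x_i)$: integration by parts in $\theta$ (whose boundary contributions vanish because $\pi_\delta$ is compactly supported and smooth) gives the identity $\iint\langle c,\hat\theta_n(x)-\theta\rangle\langle v,\nabla_\theta q\rangle d\theta\,dx=\langle c,v\rangle$, and Cauchy-Schwarz combined with the decomposition $\nabla_\theta\log q=\sum_iV'(x_i-\theta)+\nabla\log\pi_\delta(\theta)$ (whose cross term integrates to zero by $\mathbb{E}_\theta V'(X-\theta)=0$ after integrating over $x$ first) reproduces exactly the same information factor $n\langle\mathcal{I}v,v\rangle+\delta^{-2}\langle\mathcal{J}_\pi v,v\rangle$ and hence the same conclusion. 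This direct van Trees route handles (P1) and (P2) uniformly; the lemma-based route outlined above is a cleaner alternative valid under (P1) only.
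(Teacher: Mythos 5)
Your proposal is correct and proves the same inequality, but the route through case (P2) is genuinely different from the paper's. For (P1) the two arguments are essentially the same: you apply Lemma~\ref{prop:equivariant:chapman:robbins:ineq} coordinate-by-coordinate (the scalar $m=1$ version with $\psi(\theta)=\langle c,\theta\rangle$, then sum over $c=e_1,\dots,e_d$), while the paper applies the lemma once in its vector form $m=d$ with $\psi(\theta)=\theta$ and $d$ different shifts $h_j$; after the $t\downarrow0$ limit and optimization, both give $\operatorname{tr}\bigl((n\mathcal{I}+\delta^{-2}\mathcal{J}_\pi)^{-1}\bigr)$, so this is only a cosmetic difference. (One small note: the paper only establishes $\limsup_{t\to0}t^{-2}\chi^2(\cdot)\leq\cdots$, which is all that is needed, rather than the exact $t^2\langle\mathcal{I}v,v\rangle+o(t^2)$ expansion you assert; the latter is plausible under the standing assumptions but requires a little more argument.)

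The real divergence is the (P2) case. You correctly observe that $\chi^2(\Pi_\delta\circ R_{tv},\Pi_\delta)=+\infty$ whenever the prior is compactly supported and $t\neq0$, so the Chapman--Robbins denominator blows up and the lemma gives nothing directly. You then switch to the classical van Trees inequality, integrating by parts against the joint density $q(\theta,x)=\pi_\delta(\theta)\prod_i p_\theta(x_i)$ and using $\mathbb{E}_\theta V'(X-\theta)=0$ to kill the cross term, arriving at exactly $n\langle v,\mathcal{I}v\rangle+\delta^{-2}\langle v,\mathcal{J}_\pi v\rangle$ in the denominator. The paper instead circumvents the infinite $\chi^2$ by an approximation argument (Lemma~\ref{lem:Fisher:approxiation}): it embeds a compactly supported $\pi$ into a family $\pi_\epsilon$ of everywhere positive (P1)-densities with $\mathcal{J}_{\pi_\epsilon}\to\mathcal{J}_\pi$, applies the already-established (P1) bound to each $\pi_\epsilon$, and passes to the limit. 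Both routes are valid. The paper's approach keeps the whole argument inside the divergence-based Lemma~\ref{prop:equivariant:chapman:robbins:ineq}, avoiding the regularity bookkeeping (differentiation under the integral, vanishing boundary terms at the support edge, finiteness of the joint Fisher information) that the direct integration-by-parts route implicitly requires; your route is more self-contained and arguably more transparent once those technicalities are checked, which under Assumption~\ref{Assump_main} and (P2) they can be, since $\pi$ and $\pi'$ vanish at the support boundary by smoothness and the data Fisher information is finite by Lemma~\ref{bd_on_V'}.
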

Let us discuss two simple implications. First, if we choose $\pi=e^{-V}$ that satisfies (P1) (in view of Assumption \ref{Assump_main}) and let $\delta\to \infty,$ then Theorem~\ref{theorem:van:Trees:theta} implies
\begin{align*}
\inf_{\hat\theta_n}\sup_{\theta\in\mathbb{R}^d}\mathbb{E}_\theta\|\hat\theta_n-\theta\|^2\geq \frac{1}{n}\operatorname{tr}(\mathcal{I}^{-1}).
\end{align*}
Secondly, if we choose 
\begin{align*}
    \pi(\theta)=\prod_{j=1}^d\frac{3}{4}\cos^3(\theta_j)I_{[-\frac{\pi}{2},\frac{\pi}{2}]}(\theta_j),
\end{align*} 
that satisfies (P2), then we have, by an easy 
computation, $\mathcal{J}_\pi=\frac{9}{2}I_d.$ 
Moreover, for $\delta=\frac{2c}{\pi\sqrt{n}}$, $c>0$, the prior $\pi_\delta$ has support in $\{\theta\in\mathbb{R}^d:\max_j|\theta_j|\leq \frac{c}{\sqrt{n}}\}\subseteq\{\theta\in\mathbb{R}^d:\|\theta\|\leq c\sqrt{\frac{d}{n}}\}$, and Theorem \ref{theorem:van:Trees:theta} implies
\begin{align*}
 \inf_{\hat\theta_n}\sup_{\|\theta\|\leq c\sqrt{\frac{d}{n}}}\mathbb{E}_\theta\|\hat\theta_n-\theta\|^2\geq \frac{1}{n}\operatorname{tr}\Big(\Big(\mathcal{I}+\frac{9\pi^2}{8c^2}I_d\Big)^{-1}\Big).
\end{align*}
The proof of Theorem \ref{theorem:van:Trees:theta} will be based on the following lemma.

\begin{lemma}
\label{lem:Fisher:approxiation}
Let $\pi:\mathbb{R}^d\mapsto {\mathbb R}_+$ be a probability density function with respect to the Lebesgue measure $\lambda$ satisfying (P2) and $\int \frac{\|\pi'\|^2}{\pi}\,d\lambda<\infty$. Moreover, let $p=e^{-W}$, $W:\mathbb{R}^d\mapsto \mathbb{R}$, be a probability density function with respect $\lambda$ satisfying (P1). Suppose that $p$ is constant on the support of $\pi$ and that $\int_{\mathbb{R}^d}\|\theta\|^2\,p(\theta)d\theta<\infty$. For $\epsilon>0$, set
\begin{align}\label{eq:construction:extend:prior}
\pi_\epsilon:=\frac{q+\epsilon}{1+\epsilon}\ p,\qquad
q:=\frac{\pi}{\int \pi p\,d\lambda}.
\end{align}
Then, for every $\epsilon>0$, $\pi_\epsilon$ is a probability density function with respect to $\lambda$ satisfying (P1) and $\int\frac{\|\pi_\epsilon'\|^2}{\pi_\epsilon}\,d\lambda<\infty$. Moreover, as $\epsilon\rightarrow 0$,
\begin{align*}
\mathcal{J}_{\pi_\epsilon}=\int\frac{\pi_\epsilon'\otimes \pi_\epsilon'}{\pi_\epsilon}\,d\lambda\to \int\frac{\pi'\otimes \pi'}{\pi}\,d\lambda=\mathcal{J}_{\pi}.
\end{align*}
\end{lemma}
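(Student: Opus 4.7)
The plan is to verify each assertion in turn, reducing the Fisher information convergence to monotone convergence applied to the score $(\log\pi_\epsilon)' = q'/(q+\epsilon) - W'$. Normalization is immediate: $\int\pi_\epsilon\,d\lambda = \frac{1}{1+\epsilon}\bigl(\int qp\,d\lambda + \epsilon\int p\,d\lambda\bigr) = 1$ by the definition of $q$ and the fact that $p$ is a density. For (P1), I would write $\pi_\epsilon = e^{-W_\epsilon}$ with $W_\epsilon := W - \log(q+\epsilon) + \log(1+\epsilon)$. Since $q+\epsilon\geq\epsilon>0$ and $q$ inherits the (P2) regularity of $\pi$, the identities $(\log(q+\epsilon))' = q'/(q+\epsilon)$ and $(\log(q+\epsilon))'' = q''/(q+\epsilon) - (q'\otimes q')/(q+\epsilon)^2$ show that $\log(q+\epsilon)$ is $C^2$ with bounded and Lipschitz Hessian, so $W_\epsilon''$ is bounded and Lipschitz.

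Next, the computed score yields the expansion
\begin{align*}
    \mathcal{J}_{\pi_\epsilon} = \frac{1}{1+\epsilon}\int\biggl(\frac{q'\otimes q'}{q+\epsilon} - (q'\otimes W' + W'\otimes q') + (q+\epsilon)\,W'\otimes W'\biggr)p\,d\lambda.
\end{align*}
Finiteness of this integral is clear: the first summand has compactly supported, bounded integrand (bound depending on $\epsilon$), and the third summand's finiteness follows from $\|W'(\theta)\|\leq\|W'(0)\|+\|W''\|_{L_\infty}\|\theta\|$ together with $\int\|\theta\|^2 p(\theta)\,d\theta<\infty$; the cross terms are controlled by Cauchy--Schwarz.

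For the convergence $\mathcal{J}_{\pi_\epsilon}\to\mathcal{J}_\pi$, the key hypothesis that $p\equiv c$ on $\operatorname{supp}\pi$ gives $W = -\log c$ on $\operatorname{int}(\operatorname{supp}\pi)$, hence $W'= 0$ on that set. On the other hand, the (P2) smoothness of $\pi$ (twice differentiable on $\mathbb{R}^d$ with bounded support) forces $\pi$ and $\pi'$ to vanish off $\operatorname{int}(\operatorname{supp}\pi)$, so the same holds for $q$ and $q'$. Consequently $q'\otimes W' = 0$ and $q\,W'\otimes W'=0$ almost everywhere, which makes the cross term vanish identically and reduces the third summand to $\epsilon\int W'\otimes W'\,p\,d\lambda\to 0$. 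For the first summand, $\int\pi p\,d\lambda = c$ yields $q=\pi/c$, and monotone convergence (on diagonal entries, extended to the matrix by polarization, with the convention $0/0 := 0$ justified by $q' = 0$ on $\{q = 0\}$) gives
\begin{align*}
    \int\frac{q'\otimes q'}{q+\epsilon}\,p\,d\lambda = c\int\frac{q'\otimes q'}{q+\epsilon}\,d\lambda \;\longrightarrow\; c\int\frac{q'\otimes q'}{q}\,d\lambda = \mathcal{J}_\pi,
\end{align*}
and combining with $1/(1+\epsilon)\to 1$ completes the argument. The main obstacle to make rigorous is the monotone-convergence step with matrix-valued integrands near the support boundary, which ultimately rests on the (P2) smoothness forcing $\pi$ and $\pi'$ to vanish on the complement of $\operatorname{int}(\operatorname{supp}\pi)$, so that $q'$ vanishes wherever $q$ does and the naive pointwise limit $q'\otimes q'/(q+\epsilon)\to q'\otimes q'/q$ is meaningful and monotonically attained.
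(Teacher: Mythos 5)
Your proposal is correct and follows essentially the same route as the paper: write $W_\epsilon = W - \log(q+\epsilon) + \log(1+\epsilon)$ to verify (P1), exploit that $W'$ vanishes on $\operatorname{supp}\pi$ (so the cross terms and $q\,W'\otimes W'$ drop out), and reduce $\mathcal{J}_{\pi_\epsilon}$ to the two-term integral $\frac{1}{1+\epsilon}\int\bigl(\frac{q'\otimes q'}{q+\epsilon}p + \epsilon\, W'\otimes W'\,p\bigr)d\lambda$. The only cosmetic difference is that you pass to the limit via monotone convergence on quadratic forms (with polarization), whereas the paper invokes dominated convergence; both are justified here and the structure of the argument is otherwise the same.
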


\begin{proof}
To see that $\pi_\epsilon$ satisfies (P1), we have to show that $W_\epsilon:\mathbb{R}^d\mapsto \mathbb{R}$ defined by $\pi_{\epsilon}=e^{-W_\epsilon}$ is twice differentiable with $\|W_\epsilon''\|_{L_{\infty}},\|W_\epsilon''\|_{\operatorname{Lip}}<\infty$. Write $q_\epsilon=\frac{q+\epsilon}{1+\epsilon}$ such that $W_\epsilon=-\log q_\epsilon+W$. Hence,
\begin{align*}
W''_\epsilon=W''-\frac{q_\epsilon''}{q_\epsilon}+\frac{q'_\epsilon\otimes q'_\epsilon}{q_\epsilon^2}=W''-\frac{q''}{q+\epsilon}+\frac{q'\otimes q'}{(q+\epsilon)^2}.
\end{align*}
Using the fact that $q+\epsilon$ is lower bounded by $\epsilon$ and that all involved functions $W'',q',q''$ and $q+\epsilon$ are bounded and have bounded Lipschitz constant, it follows that $\pi_\epsilon$ satisfies (P1). Moreover, by the assumptions on $\pi$ and $W$, we get
\begin{align*}
\pi'_\epsilon=\frac{q'e^{-W}+(q+\epsilon)W'e^{-W}}{1+\epsilon}=\frac{q'e^{-W}+\epsilon W'e^{-W}}{1+\epsilon},
\end{align*}
and
\begin{align*}
\int \frac{\|\pi_\epsilon'\|^2}{\pi_\epsilon}\,d\lambda\leq \frac{2}{1+\epsilon}\int\Big(\frac{\|q'\|^2e^{-W}}{q+\epsilon}+\|W'\|^2e^{-W}\epsilon\Big)\,d\lambda<\infty.
\end{align*}
Moreover, using again that $W$ is constant on the support of $\pi$, we get
\begin{align*}
\mathcal{J}_{\pi_\epsilon}&=\frac{1}{1+\epsilon}\int_{\mathbb{R}^d}\Big(\frac{(q'\otimes q')e^{-W}}{q+\epsilon}+\epsilon(W'\otimes W')e^{-W}\Big)\,d\lambda\\
&\to \int_{\mathbb{R}^d}\frac{(q'\otimes q')e^{-W}}{q}\,d\lambda=
\int\frac{\pi'\otimes \pi'}{\pi}\,d\lambda=\mathcal{J}_{\pi}\qquad \text{as }\epsilon\rightarrow 0,
\end{align*} 
where we applied the dominated convergence theorem in the last step.\qed
\end{proof}

We are now ready to prove Theorem \ref{theorem:van:Trees:theta}.

\begin{proof}
We first consider the case where $\pi$ satisfies (P1). We assume that the Bayes risk on the left-hand side in \eqref{eq:van:Trees:theta} is finite because otherwise the result is trivial. Since the location model is an example of an equivariant statistical model (with translation group acting on parameter space and sample space), we can apply Lemma \ref{prop:equivariant:chapman:robbins:ineq} to $\psi(\theta)=\theta$,
which yields that for any $\theta_1,\dots,\theta_d\in\mathbb{R}^d$,
\begin{align}
&\inf_{\hat\theta_n}\int_{\mathbb{R}^d}\mathbb{E}_\theta\|\hat\theta_n-\theta\|^2\,\Pi_\delta(d\theta)
\label{eq:chapman:robbins:location}
\\
&\geq \frac{\big(\sum_{j=1}^d\langle e_j,\theta_j\rangle\big)^2}{\sum_{j=1}^d\big(\chi^2(P^{\otimes n}_{\theta_j},P^{\otimes n}_0)+\chi^2(\Pi_{\delta,\theta_j},\Pi_{\delta})+\chi^2(P^{\otimes n}_{\theta_j},P^{\otimes n}_0)\chi^2(\Pi_{\delta,\theta_j},\Pi_{\delta})\big)},\nonumber
\end{align}
where $e_1,\dots,e_d$ is the standard basis in $\mathbb{R}^d$ and $\Pi_{\delta,\theta_j}$ is the probability measure with density $\delta^{-d}\pi(\delta^{-1}(\theta+\theta_j))$, $\theta\in\mathbb{R}^d$. Let us now apply a limiting argument. First, we have
\begin{align}
\chi^2(P_{\theta_j},P_0)&=\mathbb{E}e^{2V(\xi)-2V(\xi-\theta_j)}-1 \nonumber\\
&\leq e^{L\|\theta_j\|^3}\mathbb{E}e^{2\langle V'(\xi),\theta_j\rangle-\langle V''(\xi)\theta_j,\theta_j\rangle}-1,\label{eq:chi2:computation}
\end{align}
where we used Proposition \ref{remainder_bd} in the inequality. If we set $\theta_j=th_j$ with $t>0$ and $h_j\in\mathbb{R}^d$, and then combine \eqref{eq:chi2:computation} with Lemma \ref{bd_on_V'} and Assumption \ref{Assump_main}, we get
\begin{align*}
\limsup_{t\rightarrow 0}\frac{1}{t^2}\chi^2(P_{th_j},P_0)\leq -\mathbb{E}\langle V''(\xi)h_j,h_j\rangle+2\mathbb{E}\langle V'(\xi),h_j\rangle^2=\langle h_j,\mathcal{I} h_j\rangle
\end{align*}
and thus also
\begin{align*}
\limsup_{t\rightarrow 0}\frac{1}{t^2}\chi^2(P^{\otimes n}_{th_j},P^{\otimes n}_0)\leq n\langle h_j,\mathcal{I}h_j\rangle.
\end{align*}
Moreover, since the prior density $\pi$ satisfies (P1), Proposition \ref{remainder_bd} and Lemma~\ref{bd_on_V'} are still applicable, and we also have
\begin{align*}
\limsup_{t\rightarrow 0}\frac{1}{t^2}\chi^2(\Pi_{\delta,th_j},\Pi_{\delta})\leq \langle h_j,\delta^{-2}\mathcal{J}_\pi h_j\rangle.
\end{align*}
Substituting these formulas into \eqref{eq:chapman:robbins:location}, we get for every $h_1,\dots,h_d\in\mathbb{R}^d$,
\begin{align*}
 \inf_{\hat\theta_n}\int_{\mathbb{R}^d}\mathbb{E}_{\theta}\|\hat\theta_n-\theta\|^2\,\Pi_\delta(d\theta)\geq \frac{\big(\sum_{j=1}^d\langle e_j,h_j\rangle\big)^2}{\sum_{j=1}^d\langle h_j,(n\mathcal{I}+\delta^{-2}\mathcal{J}_\pi)h_j\rangle}.
\end{align*}
Setting $h_j=(n\mathcal{I}+\delta^{-2}\mathcal{J}_\pi)^{-1}e_j$, $j\leq d$, we arrive at
\begin{align}
 &\inf_{\hat\theta_n}\int_{\mathbb{R}^d}\mathbb{E}_{\delta\theta}\|\hat\theta_n-\delta\theta\|^2\,\pi(\theta)d\theta=\inf_{\hat\theta_n}\int_{\mathbb{R}^d}\mathbb{E}_\theta\|\hat\theta_n-\theta\|^2\,\Pi_\delta(d\theta)\nonumber\\
 &\geq \sum_{j=1}^d\langle e_j,(n\mathcal{I}+\delta^{-2}\mathcal{J}_\pi)^{-1}e_j\rangle=\operatorname{tr}((n\mathcal{I}+\delta^{-2}\mathcal{J}_\pi)^{-1}).\label{eq:van:Trees:location}
\end{align}

It remains to extend \eqref{eq:van:Trees:location} to all densities $\pi$ satisfying (P2). To this end, we apply Lemma \ref{lem:Fisher:approxiation} to get $\pi_\epsilon$, $\epsilon>0$, from \eqref{eq:construction:extend:prior} satisfying (P1) and $\lim_{\epsilon\rightarrow 0}\mathcal{J}_{\pi_\epsilon}=\mathcal{J}_\pi$. Note also that $qp=\pi.$ Therefore,
\begin{align*}
&
\inf_{\hat \theta_n}\int_{\mathbb{R}^d}\mathbb{E}_{\delta\theta}\|\hat\theta_n-\delta\theta\|^2\,\pi(\theta)d\theta
=
\lim_{\epsilon\to 0} \inf_{\hat \theta_n} \int_{\mathbb{R}^d}\mathbb{E}_{\delta\theta}\|\hat\theta_n-\delta\theta\|^2\, (\pi(\theta)+\epsilon p(\theta))d\theta
\\
&
= \lim_{\epsilon\to 0} 
\frac{1}{1+\epsilon}\inf_{\hat \theta_n} \int_{\mathbb{R}^d}\mathbb{E}_{\delta\theta}\|\hat\theta_n-\delta\theta\|^2\, (q(\theta)p(\theta)+\epsilon p(\theta))d\theta
\\
&
=
\lim_{\epsilon\to 0} \inf_{\hat \theta_n} \int_{\mathbb{R}^d}\mathbb{E}_{\delta\theta}\|\hat\theta_n-\delta\theta\|^2\, \pi_{\epsilon}(\theta)d\theta
\geq \lim_{\epsilon\to 0} \operatorname{tr}((n\mathcal{I}+\delta^{-2}\mathcal{J}_{\pi_\epsilon})^{-1})
\\
&
=\operatorname{tr}((n\mathcal{I}+\delta^{-2}\mathcal{J}_\pi)^{-1}),
\end{align*}
where we applied \eqref{eq:van:Trees:location} to $\pi_\epsilon.$

\qed
\end{proof}

We now turn to the estimation of functionals of the location parameter. 
For a continuous function $g:\mathbb{R}^d\rightarrow \mathbb{R}^{d}$ and $x_0\in \mathbb{R}^d$, the local continuity modulus of $g$ at point $x_0$ is defined by 
\begin{align*}
\omega_{g}(x_0,\delta)=\sup_{\|x-x_0\|\leq  \delta}\|g(x)-g(x_0)\|,\ \delta\geq 0.
\end{align*}

\begin{theorem}\label{thm:functional:lower:bound}
Let $f:\mathbb{R}^d\mapsto \mathbb{R}$ be a continuously differentiable function and let $\theta_0\in\mathbb{R}^d$. Let $\pi:\mathbb{R}\mapsto {\mathbb R}_+$ be a probability density function satisfying (P2) for $d=1$. Suppose that
\begin{align*}
\mathcal{J}_\pi=\int_{\mathbb{R}}\frac{(\pi'(s))^2}{\pi(s)}\,ds<\infty.
\end{align*}
Then there exists $v\in\mathbb{R}^d$ with $\|v\|=1$, such that, for every $\delta>0$,
\begin{align*}
 \inf_{\hat T_n}&\Big(\int_{\mathbb{R}}n\mathbb{E}_{\theta_0+sv}(\hat T_n-f(\theta_0+sv))^2\,\Pi_\delta(ds)\Big)^{1/2}
 \\
 &
 \geq \|\mathcal{I}^{-1/2}f'(\theta_0)\|-\sqrt{\frac{\mathcal{J}_\pi}{\delta^2n}}\|\mathcal{I}^{-1}f'(\theta_0)\|-\int_{\mathbb{R}}\omega_{\mathcal{I}^{-1/2}f'}(\theta_0,|s|)\,\Pi_\delta(ds),
\end{align*}
where $\Pi_\delta$ is the prior distribution with density $\delta^{-1}\pi(\delta^{-1}s)$, $s\in\mathbb{R}$ and $\hat T_n=\hat T_n(X_1,\dots, X_n)$
is an arbitrary estimator based on $(X_1,\dots, X_n).$
\end{theorem}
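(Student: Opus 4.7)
The plan is to reduce the problem to a one-dimensional location submodel along a single line through $\theta_0$ and then apply the equivariant Chapman--Robbins--van Trees inequality (Lemma \ref{prop:equivariant:chapman:robbins:ineq}). Assume $f'(\theta_0)\neq 0$ (otherwise $\sigma_f(\theta_0)=\|\mathcal{I}^{-1/2}f'(\theta_0)\|=0$ and the right-hand side of the inequality is nonpositive, so the bound is trivial); choose
\begin{align*}
v:=\frac{\mathcal{I}^{-1}f'(\theta_0)}{\|\mathcal{I}^{-1}f'(\theta_0)\|}.
\end{align*}
This is the maximizer of $w\mapsto \langle f'(\theta_0),w\rangle^2/\langle \mathcal{I} w,w\rangle$ on the unit sphere, and it satisfies the identity $\langle f'(\theta_0),v\rangle^2/\langle \mathcal{I}v,v\rangle=\|\mathcal{I}^{-1/2}f'(\theta_0)\|^2=\sigma_f^2(\theta_0)$, matching the leading term of the target bound. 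The one-dimensional submodel $s\mapsto P^v_s:=P_{\theta_0+sv}$, $s\in\mathbb{R}$, is equivariant under the additive group $G=\mathbb{R}$ acting on the parameter space by $s\mapsto s+g$ and on the sample space by $x\mapsto x+gv$.

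Apply Lemma \ref{prop:equivariant:chapman:robbins:ineq} to this submodel with $m=1$, $\psi(s):=f(\theta_0+sv)$, prior $\Pi_\delta$, base point $\theta=0$ and a single perturbation $h_1=t\in\mathbb{R}$. Dividing by $t^2$ and letting $t\to 0$, the $\chi^2$-expansion already carried out in the proof of Theorem~\ref{theorem:van:Trees:theta} (specialized to direction $v$) yields $t^{-2}\chi^2(P_{\theta_0+tv}^{\otimes n},P_{\theta_0}^{\otimes n})\to n\langle\mathcal{I}v,v\rangle$ and the analogous one-dimensional prior computation gives $t^{-2}\chi^2(\Pi_\delta\circ R_t,\Pi_\delta)\to \mathcal{J}_{\pi_\delta}=\delta^{-2}\mathcal{J}_\pi$, while the cross-product is of order $t^4$ and disappears. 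Combined with dominated convergence for the numerator (bounded support of $\Pi_\delta$ under (P2) and continuity of $f'$), this yields
\begin{align*}
n\int_{\mathbb{R}}\mathbb{E}_{\theta_0+sv}(\hat T_n-f(\theta_0+sv))^2\,\Pi_\delta(ds)\geq \frac{N^2}{\langle \mathcal{I}v,v\rangle+\mathcal{J}_\pi/(\delta^2 n)},
\end{align*}
where $N:=\int_{\mathbb{R}}\langle f'(\theta_0+sv),v\rangle\,\Pi_\delta(ds)$.

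To extract the stated form, decompose $N=\langle f'(\theta_0),v\rangle+E$ with $E:=\int\langle f'(\theta_0+sv)-f'(\theta_0),v\rangle\,\Pi_\delta(ds)$, rewrite the integrand in $E$ as $\langle \mathcal{I}^{-1/2}f'(\theta_0+sv)-\mathcal{I}^{-1/2}f'(\theta_0),\mathcal{I}^{1/2}v\rangle$ and apply Cauchy--Schwarz with $\|v\|=1$ to obtain $|E|\leq\sqrt{\langle\mathcal{I}v,v\rangle}\int\omega_{\mathcal{I}^{-1/2}f'}(\theta_0,|s|)\,\Pi_\delta(ds)$. Setting $A:=\langle\mathcal{I}v,v\rangle$ and $B:=\mathcal{J}_\pi/(\delta^2 n)$, taking square roots in the Chapman--Robbins inequality and combining $\sqrt{A+B}\leq\sqrt{A}+\sqrt{B}$, the elementary bound $1/(1+x)\geq 1-x$ for $x\geq 0$, and the identities $\langle f'(\theta_0),v\rangle/\sqrt{A}=\sigma_f(\theta_0)$ and $\sqrt{B/A}\,\sigma_f(\theta_0)=\sqrt{B}\,\|\mathcal{I}^{-1}f'(\theta_0)\|$, one obtains
\begin{align*}
\frac{\langle f'(\theta_0),v\rangle-|E|}{\sqrt{A+B}}\geq \sigma_f(\theta_0)-\sqrt{\frac{\mathcal{J}_\pi}{\delta^2 n}}\,\|\mathcal{I}^{-1}f'(\theta_0)\|-\int_{\mathbb{R}}\omega_{\mathcal{I}^{-1/2}f'}(\theta_0,|s|)\,\Pi_\delta(ds),
\end{align*}
which, after noting $|N|\geq \langle f'(\theta_0),v\rangle-|E|$ whenever this is nonnegative (and the claim being trivial otherwise), gives the theorem. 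The main technical obstacle is that under (P2) the translate $\Pi_\delta\circ R_t$ has support outside that of $\Pi_\delta$, so $\chi^2(\Pi_\delta\circ R_t,\Pi_\delta)=\infty$ and the limiting step above does not apply directly; this forces first establishing the bound under a (P1)-type extension of $\pi$ and then passing to (P2) through the mollification of Lemma~\ref{lem:Fisher:approxiation}, exactly as in the proof of Theorem~\ref{theorem:van:Trees:theta}.
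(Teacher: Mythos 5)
Your proof is correct and follows the same overall strategy as the paper: restrict to a one-dimensional submodel along a line through $\theta_0$, apply Lemma~\ref{prop:equivariant:chapman:robbins:ineq}, take the $t\to 0$ limit to turn the Chapman--Robbins ratio into a van Trees-type bound, and handle (P2) priors via the (P1) mollification from Lemma~\ref{lem:Fisher:approxiation}. The place where you genuinely diverge from the paper is the choice of direction and the algebra that follows. The paper sets $A=\mathcal{I}+\frac{\mathcal{J}_\pi}{\delta^2 n}I_d$ and takes $v=A^{-1}f'(\theta_0)/\|A^{-1}f'(\theta_0)\|$, i.e.\ the direction that exactly maximizes the finite-sample Chapman--Robbins ratio; it then lower bounds $\|A^{-1/2}f'(\theta_0)\|$ by $\|\mathcal{I}^{-1/2}f'(\theta_0)\|-\sqrt{\mathcal{J}_\pi/(\delta^2 n)}\|\mathcal{I}^{-1}f'(\theta_0)\|$ and shows $\omega_{A^{-1/2}f'}\leq\omega_{\mathcal{I}^{-1/2}f'}$ via $A^{-1}\preceq\mathcal{I}^{-1}$. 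You instead take $v=\mathcal{I}^{-1}f'(\theta_0)/\|\mathcal{I}^{-1}f'(\theta_0)\|$ (the maximizer of the asymptotic ratio, independent of $\delta$) and push through the inequality $(1+x)^{-1/2}\geq 1-\sqrt{x}$ together with the identities $\langle f'(\theta_0),v\rangle/\sqrt{\langle\mathcal{I}v,v\rangle}=\sigma_f(\theta_0)$ and $\sqrt{B/\langle\mathcal{I}v,v\rangle}\,\sigma_f(\theta_0)=\sqrt{B}\,\|\mathcal{I}^{-1}f'(\theta_0)\|$; I verified these and the arithmetic works out. Your route has a minor advantage: the $v$ you construct does not depend on $\delta$, so it actually matches the quantifier order in the theorem statement (``there exists $v$ such that for every $\delta$''), whereas the paper's $v$ depends on $\delta$ through $A$ and formally only yields ``for every $\delta$ there exists $v$''. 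The paper's route, in exchange, makes the optimization of the Chapman--Robbins bound transparent before any approximations are made. You also correctly identify and resolve the obstruction that a compactly supported (P2) prior has $\chi^2(\Pi_\delta\circ R_t,\Pi_\delta)=\infty$, handled exactly as in the paper via Lemma~\ref{lem:Fisher:approxiation}.
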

%
%
\begin{proof}
Without loss of generality we may assume that $\theta_0=0.$
Our goal is to apply Lemma \ref{prop:equivariant:chapman:robbins:ineq} to $\psi=f$ and to the one-dimensional subgroup $G=\mathbb{R}v=\{s v:s\in\mathbb{R}\}$ with direction $v\in\mathbb{R}^d$, $\|v\|=1$, to be determined later.
As in the proof of Theorem~\ref{theorem:van:Trees:theta}, we first establish a lower bound for densities $\pi=e^{-W}$ satisfying (P1) with $d=1$ and for the special case that $f$ and $f'$ are bounded on $\mathbb{R}v.$ 
In this case, applying Lemma \ref{prop:equivariant:chapman:robbins:ineq}, we get for every $t\in\mathbb{R}$,
\begin{align}
 &
 \inf_{\hat T_n}\int_{\mathbb{R}}\mathbb{E}_{s v}(\hat T_n-f(s v))^2\,\Pi_\delta(ds)\nonumber
 \\
 &
 \geq \frac{\big(\int_\mathbb{R}(f((s-t)v)-f(sv))\,\Pi_\delta (ds)\big)^2}{\chi^2(P^{\otimes n}_{t v},P^{\otimes n}_0)+\chi^2(\Pi_{\delta,t},\Pi_\delta)+\chi^2(P^{\otimes n}_{t v},P^{\otimes n}_0)\chi^2(\Pi_{\delta,t},\Pi_\delta)},
 \label{eq:chapman:robbins:functional}
\end{align}
where $\Pi_{\delta,t}$ is the probability measure with density $\pi_{\delta,t}(s)=\delta^{-1}\pi(\delta^{-1}(s+t))$, $s\in\mathbb{R}$. Now, using that $\pi$ satisfies (P1), we have 
\begin{align*}
&\limsup_{t\rightarrow 0}\frac{1}{t^2}\chi^2(P^{\otimes n}_{t v},P^{\otimes n}_0)\leq n \langle v, \mathcal{I}v\rangle,
\\
&\limsup_{t\rightarrow 0}\frac{1}{t^2}\chi^2(\Pi_{\delta,t},\Pi_\delta)
\leq \delta^{-2}\mathcal{J}_\pi=\delta^{-2}\int_\mathbb{R}W''(s)e^{-W(s)}\,ds,
\end{align*}
as shown in the proof of Theorem \ref{theorem:van:Trees:theta}. Moreover, using that $f$ and $f'$ are bounded on $\mathbb{R}v,$ standard results on the differentiation of integrals where the integrand depends on a real parameter (e.g.~\cite[Corollary 5.9]{MR1312157}) yield
\begin{align*}
\lim_{t\rightarrow 0}\frac{1}{t}\int_\mathbb{R}(f((s-t)v)-f(sv))\,\Pi_\delta(ds)=-\int_\mathbb{R}\langle f'(s v), v\rangle\,\Pi_\delta(ds).
\end{align*}
Substituting these formulas into \eqref{eq:chapman:robbins:location} and letting $t$ go to zero, we get
\begin{align}
\label{eq:van:Trees:functional:positive:density}
 \inf_{\hat T_n}\int_{\mathbb{R}}\mathbb{E}_{s v}(\hat T_n-f(s v))^2\,\Pi_\delta(ds)
 &\geq \frac{\big(\int_\mathbb{R}\langle f'(sv),v\rangle\,\Pi_\delta(ds)\big)^2}{n \langle v,\mathcal{I} v\rangle+\delta^{-2}\mathcal{J}_\pi}.
\end{align}
While this inequality holds for all densities $\pi$ satisfying (P1), we can apply Lemma \ref{lem:Fisher:approxiation} to extend this inequality to all probability densities satisfying (P2) (see the proof of Theorem \ref{theorem:van:Trees:theta} for the detailed argument). Moreover, for densities $\pi$ with bounded support, we can also drop the boundedness conditions on $f$ and $f'$. In fact, the latter can be achieved by applying \eqref{eq:van:Trees:functional:positive:density} to a functional $g$ with $g,g'$ bounded on $\mathbb{R}v$ and $g=f$ on the support of $\Pi_\delta$ (times $v$). As a consequence, under the assumptions of Theorem \ref{thm:functional:lower:bound}, we have for every $v\in\mathbb{R}$, 
$\|v\|=1$, 
\begin{align*}
 &\inf_{\hat T_n}\int_{\mathbb{R}}\mathbb{E}_{s v}(\hat T_n-f(s v))^2\,\Pi_\delta(ds)\\
 &\geq \frac{\big(\int_\mathbb{R}\langle f'(s v), v\rangle\,\Pi_\delta(ds)\big)^2}{n\langle v, \mathcal{I}v\rangle+\delta^{-2}\mathcal{J}_\pi}
 =\frac{1}{n}\frac{\big(\int_\mathbb{R}\langle f'(s v), v\rangle\,\Pi_\delta(ds)\big)^2}{\langle v,(\mathcal{I}+\frac{\mathcal{J}_\pi}{\delta^2n} I_d)v\rangle}.
\end{align*}
Choosing
\begin{align*}
 v=\frac{A^{-1}f'(0)}{\|A^{-1}f'(0)\|},\qquad A=\mathcal{I}+\frac{\mathcal{J}_\pi}{\delta^2n}I_d,
\end{align*}
we obtain
\begin{align*}
 \inf_{\hat T_n}\Big(\int_{\mathbb{R}}n\mathbb{E}_{s v}(\hat T_n-f(s v))^2\,\Pi_\delta(ds)\Big)^{1/2}
 &\geq\frac{|\int_\mathbb{R}\langle A^{-1/2}f'(s v),A^{-1/2}f'(0)\rangle\,\Pi_\delta(ds)|}{\|A^{-1/2}f'(0)\|}.
\end{align*}
Using the inequality
\begin{align*}
&
|\langle A^{-1/2}f'(s v),A^{-1/2}f'(0)\rangle|
\\
&
\geq \|A^{-1/2}f'(0)\|^2-\|A^{-1/2}f'(0)\|\|A^{-1/2}f'(0)-A^{-1/2}f'(s v)\|\\
&
\geq \|A^{-1/2}f'(0)\|(\|A^{-1/2}f'(0)\|-\omega_{A^{-1/2}f'}(0,|s|)),
\end{align*}
we arrive at 
\begin{align}
 &
 \inf_{\hat T_n}\Big(\int_{\mathbb{R}}n\mathbb{E}_{sv}(\hat T_n-f(s v))^2\,\Pi_\delta(ds)\Big)^{1/2}
 \nonumber
 \\
 &
 \geq \|A^{-1/2}f'(0)\|-\int_{\mathbb{R}}\omega_{A^{-1/2}f'}(0,|s|)\,d\Pi_\delta(s).\label{eq:van:Trees:positive:density:functional}
\end{align}
Since $A^{-1}=\mathcal{I}^{-1}-\frac{\mathcal{J}_\pi}{\delta^2n}A^{-1}\mathcal{I}^{-1}\succeq 0$ and 
${\mathcal I}^{-2} \succeq A^{-1}\mathcal{I}^{-1},$
we have 
\begin{align*}
\|A^{-1/2}f'(0)\|
&=\langle A^{-1}f'(0),f'(0)\rangle^{1/2}
\\
&
=(\langle {\mathcal I}^{-1}f'(0),f'(0)\rangle - \langle\frac{\mathcal{J}_\pi}{\delta^2n}A^{-1}\mathcal{I}^{-1} f'(0), f'(0)\rangle)^{1/2}
\\
&
\geq \langle {\mathcal I}^{-1}f'(0),f'(0)\rangle^{1/2}-\langle\frac{\mathcal{J}_\pi}{\delta^2n}A^{-1}\mathcal{I}^{-1} f'(0),f'(0)\rangle^{1/2}
\\
&\geq \|\mathcal{I}^{-1/2}f'(0)\|-\sqrt{\frac{\mathcal{J}_\pi}{\delta^2n}}\|\mathcal{I}^{-1}f'(0)\|.
\end{align*}
Note also that $A^{-1} \preceq {\mathcal I}^{-1},$ implying $\|A^{-1}u\|\leq \|{\mathcal I}^{-1}u\|, u\in {\mathbb R}^d$
and, as a consequence, $\omega_{A^{-1/2}f'}(0,|s|)\leq \omega_{\mathcal{I}^{-1/2}f'}(0,|s|).$
These bounds along with \eqref{eq:van:Trees:positive:density:functional} imply the claim of the theorem.
\qed
\end{proof}
  
Finally, we prove Proposition \ref{min_max_lower_local}.

\begin{proof} Let us choose $\pi(s)=\frac{3}{4}\cos^3(\theta)I_{[-\frac{\pi}{2},\frac{\pi}{2}]}(\theta)$ in which case (P2) holds and $\mathcal{J}_\pi=\frac{9}{2}$. 
Choosing additionally $\delta=\frac{2c}{\pi\sqrt{n}}$, $c>0$, Theorem \ref{thm:functional:lower:bound} yields
\begin{align}
 \label{bd_on_inf_sup}
 &
 \nonumber
 \inf_{\hat T_n}\sup_{\|\theta-\theta_0\|\leq \frac{c}{\sqrt{n}}}(n\mathbb{E}_\theta(\hat T_n-f(\theta))^2)^{1/2}
\\
&
 \geq \|\mathcal{I}^{-1/2}f'(\theta_0)\|-\frac{3\pi}{\sqrt{8}c}\|\mathcal{I}^{-1}f'(\theta_0)\|
 -\omega_{\mathcal{I}^{-1/2}f'}\Big(\theta_0,\frac{c}{\sqrt{n}}\Big).
\end{align}
Under Assumption \ref{Assump_main}, $\|\mathcal{I}^{-1}f'(\theta_0)\|\leq \frac{1}{\sqrt{m}}\|\mathcal{I}^{-1/2}f'(\theta_0)\|.$
In addition, for $f\in C^{s},$ where $s=1+\rho,$ $\rho\in (0,1],$ 
\begin{align*}
\omega_{\mathcal{I}^{-1/2}f'}\Big(\theta_0,\frac{c}{\sqrt{n}}\Big)\leq 
\frac{1}{\sqrt{m}}\omega_{f'}\Big(\theta_0,\frac{c}{\sqrt{n}}\Big)\leq 
\frac{1}{\sqrt{m}}\|f\|_{C^s} \Bigl(\frac{c}{\sqrt{n}}\Bigr)^{\rho}.
\end{align*}
Recalling that $\|\mathcal{I}^{-1/2}f'(\theta_0)\|=\sigma_f(\theta_0),$ bound \eqref{bd_on_inf_sup} implies 
\begin{align}
\label{bd_on_inf_sup_AA}
&
\nonumber
\inf_{\hat T_n}\sup_{\|\theta-\theta_0\|\leq \frac{c}{\sqrt{n}}}\frac{\sqrt{n}\|\hat T_n-f(\theta)\|_{L_2({\mathbb P}_{\theta})}}{\sigma_f(\theta_0)}
\\
&
\geq 1- \frac{3\pi}{\sqrt{8m}c}
 -\frac{1}{\sqrt{m}}\frac{\|f\|_{C^s}}{\sigma_f(\theta_0)} 
 \Bigl(\frac{c}{\sqrt{n}}\Bigr)^{\rho}.
\end{align}
Note that, for all $\theta$ satisfying $\|\theta-\theta_0\|\leq \frac{c}{\sqrt{n}},$
\begin{align*}
&
|\sigma_f(\theta)-\sigma_f(\theta_0)|= |\|\mathcal{I}^{-1/2}f'(\theta)\|-\|\mathcal{I}^{-1/2}f'(\theta_0)\|| 
\\
&
\leq \omega_{\mathcal{I}^{-1/2}f'}\Big(\theta_0,\frac{c}{\sqrt{n}}\Big)
\leq \frac{1}{\sqrt{m}}\|f\|_{C^s} \Bigl(\frac{c}{\sqrt{n}}\Bigr)^{\rho}.
\end{align*}
Therefore,
\begin{align*}
&
\sup_{\|\theta-\theta_0\|\leq \frac{c}{\sqrt{n}}}\frac{\sqrt{n}\|\hat T_n-f(\theta)\|_{L_2({\mathbb P}_{\theta})}}{\sigma_f(\theta_0)}
\\
&
\leq 
\sup_{\|\theta-\theta_0\|\leq \frac{c}{\sqrt{n}}}\frac{\sqrt{n}\|\hat T_n-f(\theta)\|_{L_2({\mathbb P}_{\theta})}}{\sigma_f(\theta)}
\sup_{\|\theta-\theta_0\|\leq \frac{c}{\sqrt{n}}}\frac{\sigma_f(\theta)}{\sigma_f(\theta_0)}
\\
&
\leq 
\sup_{\|\theta-\theta_0\|\leq \frac{c}{\sqrt{n}}}\frac{\sqrt{n}\|\hat T_n-f(\theta)\|_{L_2({\mathbb P}_{\theta})}}{\sigma_f(\theta)}
\Bigl(1+\sup_{\|\theta-\theta_0\|\leq \frac{c}{\sqrt{n}}}\frac{|\sigma_f(\theta)-\sigma_f(\theta_0)|}{\sigma_f(\theta_0)}\Bigr)
\\
&
\leq \sup_{\|\theta-\theta_0\|\leq \frac{c}{\sqrt{n}}}\frac{\sqrt{n}\|\hat T_n-f(\theta)\|_{L_2({\mathbb P}_{\theta})}}{\sigma_f(\theta)}
\Bigl(1+ \frac{1}{\sqrt{m}}\frac{\|f\|_{C^s}}{\sigma_f(\theta_0)} \Bigl(\frac{c}{\sqrt{n}}\Bigr)^{\rho}\Bigr).
\end{align*}
Using this bound together with \eqref{bd_on_inf_sup_AA} easily yields
\begin{align*}
&
\sup_{\|\theta-\theta_0\|\leq \frac{c}{\sqrt{n}}}\frac{\sqrt{n}\|\hat T_n-f(\theta)\|_{L_2({\mathbb P}_{\theta})}}{\sigma_f(\theta)}
\geq 1- \frac{3\pi}{\sqrt{8m}c}
 -\frac{2}{\sqrt{m}}\frac{\|f\|_{C^s}}{\sigma_f(\theta_0)} 
 \Bigl(\frac{c}{\sqrt{n}}\Bigr)^{\rho}.
 \end{align*}

\qed
\end{proof}


\end{document}